\newtheorem{Lemma}             {Lemma}
\newtheorem{Corollary}  [Lemma]{Corollary}
\newtheorem{Proposition}[Lemma]{Proposition}
\newtheorem{Remark}     [Lemma]{Remark}
\newtheorem{Example}    [Lemma]{Example}
\newtheorem{Theorem}    [Lemma]{Theorem}
\theoremstyle{definition}
\newtheorem{Notation}[Lemma]{Notation}
\newtheorem{Rem}[Lemma]{Remark}
\newcommand{\op}{\operatorname}
\newcommand{\ov}{\overline}
\newcommand{\Z}{\mathbb Z}
\newcommand{\C}{\mathbb C}
\DeclareMathOperator\hd{hd}
\DeclareMathOperator\soc{soc}
\DeclareMathOperator\rad{rad}
\title{The self-dual indecomposable modules in blocks with cyclic defect groups}
\author{Caroline Lassueur}
\address{{Caroline Lassueur}, RPTU Kaiserslautern-Landau, Fachbereich Mathematik, 67653 Kaiserslautern, Germany and Leibniz Universit\"at Hannover, Institut f\"ur Algebra, Zahlentheorie und Diskrete Mathematik, Welfengarten 1, 30167 Hannover, Germany}
\author{John Murray} \address{{John Murray}, Department of Mathematics \& Statistics, Maynooth University, Ireland}
\date{\today}
\begin{document}
\begin{abstract}
Let\/ $p$ be an odd prime and let\/ $\mathbf{B}$ be a $p$-block of a finite group, such that\/ $\mathbf{B}$ has cyclic defect groups. We describe the self-dual indecomposable $\mathbf{B}$-modules and  for each such module determine whether it is symplectic or orthogonal.
\end{abstract}

\subjclass[2020]{Primary 20C15, 20C20. Secondary 16G70.} 
\keywords{Real blocks, blocks with cyclic defect groups, self-dual modules, Auslander-Reiten quiver.}
\maketitle

\section{Introduction and statement of results}\label{sec:intro}%

Let\/ $G$ be a finite group and let\/ $k$ be an algebraically closed field of prime characteristic $p$. The ordinary irreducible characters $\op{Irr}(G)$ of\/ $G$ are partitioned into ($p$-)blocks, depending on their values on the $p'$-order elements of\/ $G$. These blocks are in bijection with the blocks of the group algebra $kG$, that is, the minimal two-sided ideals of\/ $kG$. Moreover, if\/ $M$ is an indecomposable $kG$-module, there is exactly one block of\/ $kG$ which does not annihilate~$M$. We then say that\/ $M$ belongs to the corresponding block of\/ $G$.

Let\/ $\mathbf{B}$ be a block of\/ $G$. The defect groups of\/ $\mathbf{B}$ are a certain conjugation orbit of\/ $p$-subgroups of\/ $G$. For example the block of\/ $G$ containing the trivial $kG$-module is called the principal block of\/ $G$ and its defect groups are the Sylow $p$-subgroups of\/ $G$. At the other extreme, any block which contains a single irreducible character of\/ $G$ has a trivial defect group.

Suppose that\/ $p\ne2$, that\/ $\mathbf{B}$ has a non-trivial cyclic defect group $D$ and $e$ irreducible modules. Then $e$ is called the inertial index of\/ $\mathbf{B}$, and it is known that\/ $\mathbf{B}$ has a finite number, namely $e|D|$, of isomorphism classes of indecomposable modules.

The Brauer tree of\/ $\mathbf{B}$ is a planar embedded graph $\sigma(\mathbf{B})$ which has $e$ edges labelled by the irreducible $\mathbf{B}$-modules. Now $\sigma(\mathbf{B})$ has $e+1$ vertices. One vertex is said to be \emph{exceptional}, and it comes with a positive integer $m:=(|D|-1)/e$, called its multiplicity (or the exceptional multiplicity of\/ $\sigma(\mathbf{B})$). The exceptional vertex is associated with a family of\/ $m$ {\em exceptional characters} in $\op{Irr}(\mathbf{B})$. The remaining $e$ vertices are then said to be non-exceptional, and each of them corresponds to a single non-exceptional character in $\op{Irr}(\mathbf{B})$. Thus $\op{Irr}(\mathbf{B})$ contains $e+m$ characters.  Notice that if\/ $m=1$, then any vertex may be designated as the exceptional vertex. Furthermore, it is known that the Morita equivalence class of\/ $\mathbf{B}$ is determined by $\sigma(\mathbf{B})$ together with its exceptional vertex and  exceptional multiplicity $m$. We make this more precise below.

The complex conjugates of the characters in $\mathbf{B}$ form another block\/ $\mathbf{B}^\circ$, called the contragredient of\/ $\mathbf{B}$. Moreover if\/ $M$ is an indecomposable $\mathbf{B}$-module then the dual $kG$-module $M^*:=\op{Hom}_k(M,k)$ is an indecomposable $\mathbf{B}^\circ$-module. We say that\/ $\mathbf{B}$ is a {\em real block} if it coincides with $\mathbf{B}^\circ$. % Note that\/ $\mathbf{B}$ is real if and only if it contains a self-dual module.

Now suppose that\/ $\mathbf{B}$ is a real block. In \cite{Gr}, J. Green showed that the real valued non-exceptional characters in $\mathbf{B}$, together with the exceptional characters in $\mathbf{B}$, correspond to a line segment in $\sigma(\mathbf{B})$, known as the {\em real stem} of\/ $\sigma(\mathbf{B})$. The edges in the real stem correspond to the self-dual irreducible $\mathbf{B}$-modules. All or none of the exceptional characters are real-valued. Moreover, the complex conjugate of an exceptional character is exceptional and duality acts as reflection in the real stem. 

As $p$ is an odd prime, \cite[Proposition 2.1]{W} shows that each self-dual indecomposable $kG$-module $M$ affords a non-degenerate $G$-equivariant quadratic form ($+$ type) or a non-degenerate $G$-equivariant symplectic form ($-$ type). The type indicates whether the associated group representation $G\rightarrow\op{GL}(M)$ factors through an orthogonal subgroup $\op{O}(M)$ or a symplectic subgroup $\op{Sp}(M)$ of\/ $\op{GL}(M)$, respectively. 

Our aim in this article is two-fold. First, we want to give a concrete classification of the self-dual indecomposable $\mathbf{B}$-modules (up to isomorphism). Second, we want to determine their type. We will make use of Janusz' classification in \cite{Ja} of all indecomposable $\mathbf{B}$-modules in terms of certain walks on the Brauer tree called \emph{paths}. We then reduce the determination of the type of a self-dual module, to the determination of the type a certain self-dual indecomposable module of a local subgroup of\/ $G$. This in turn can be computed using the character table of the subgroup.

We summarize our main results in the following two theorems. The statements will be made more precise in Theorem \ref{thm:description_selfduals} and Theorem \ref{thm:type_selfduals_general_case} in Section \ref{sec:generalcase}:

\begin{Theorem}\label{thm:intro}%
Let\/ $M$ be a non-projective indecomposable module belonging to $\mathbf{B}$. Then $M$ is self-dual if and only if one of the following conditions is satisfied:
\begin{enumerate}[\rm (1)]
\item $M$ is an irreducible module indexed by an edge on the real stem of\/ $\sigma(\mathbf{B})$; or
\item $M$ is reducible and its Janusz path – in the sense of Notation~\ref{nota:paths_dir_mult} –  is fixed under reflection in the real stem.
\end{enumerate}
\end{Theorem}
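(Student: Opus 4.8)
The plan is to use Janusz' parametrization of the non-projective indecomposable $\mathbf B$-modules by paths on the Brauer tree $\sigma(\mathbf B)$, together with the compatibility of this parametrization with $k$-duality. The key point is that taking $k$-duals of $\mathbf B$-modules corresponds to an involution on the set of Janusz paths, and — since $\mathbf B$ is real — this involution is realized geometrically by reflection in the real stem of $\sigma(\mathbf B)$. Once this is established, a module is self-dual exactly when its path is fixed by that reflection, and it remains only to unwind what the fixed paths look like. I expect the work to split into two halves: (a) identifying the duality involution on paths with reflection in the real stem, and (b) sorting the fixed paths into the two cases of the statement.

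First I would recall, from Janusz' classification and the description of the Brauer tree in the real case due to Green, that every non-projective indecomposable $M$ in $\mathbf B$ has a well-defined Janusz path $\pi(M)$, and that $\pi(M^*)$ is obtained from $\pi(M)$ by applying the graph automorphism of $\sigma(\mathbf B)$ induced by the duality functor. The crucial input is Green's theorem, quoted in the excerpt: because $\mathbf B = \mathbf B^\circ$, duality permutes $\op{Irr}(\mathbf B)$, fixes the exceptional family setwise, and acts on the planar embedded tree $\sigma(\mathbf B)$ precisely as the reflection $\rho$ fixing the real stem pointwise. Hence the induced action on Janusz paths is $\pi \mapsto \rho(\pi)$, and $M \cong M^*$ if and only if $\rho(\pi(M)) = \pi(M)$, where the equality is an equality of paths (as combinatorial objects with direction and multiplicity data, in the sense of Notation~\ref{nota:paths_dir_mult}). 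This reduces the theorem to a purely combinatorial statement about $\rho$-fixed paths.

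Next I would analyze the $\rho$-fixed paths. A path of length one is a single edge; it is $\rho$-fixed iff the edge lies on the real stem, and such edges are exactly the self-dual irreducible $\mathbf B$-modules, giving case (1). For a path of length $\ge 2$ — corresponding to a reducible $M$ — being $\rho$-fixed as a directed, multiplicity-decorated walk is by definition the statement that the Janusz path is fixed under reflection in the real stem, which is case (2). I would also check the converse direction, namely that a path fixed by $\rho$ does give a self-dual module and not merely one isomorphic to its dual after some twist; this follows because the Janusz parametrization is a bijection, so $\pi(M) = \pi(M^*)$ forces $M \cong M^*$. A small point to verify carefully is that the length-one and length-$\ge 2$ cases are genuinely exhaustive and disjoint for non-projective indecomposables, and that an edge on the real stem is never simultaneously the underlying data of a longer $\rho$-fixed path — but this is immediate from the definition of Janusz paths.

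The main obstacle I anticipate is step (a): pinning down precisely that the duality functor induces reflection in the real stem on the level of Janusz paths, with all the decoration data (direction of traversal, and the multiplicity exponent attached to the exceptional vertex) transported correctly. Green's result gives the action on the tree and on characters, but matching it with Janusz' combinatorial bookkeeping requires checking that duality sends a path, traversed in a given rotational sense around its vertices, to the $\rho$-image path traversed in the opposite (mirror-image) sense, and that the exceptional multiplicity decoration is invariant because $\rho$ fixes the exceptional vertex. Once that bookkeeping is in place, everything else is a direct reading-off, and the sharper statements can then be recorded as Theorem~\ref{thm:description_selfduals}.
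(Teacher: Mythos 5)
Your route is genuinely different from the paper's, and its central step is left unproved. The paper never argues the ``only if'' direction directly: it first shows (Lemma~\ref{L:self-dual-green}, Lemma~\ref{lem:sdB_1general}, Corollary~\ref{cor:numberselfduals}) via Green correspondence with the uniserial Brauer correspondent $\mathbf{B}_1$ that $\mathbf{B}$ has exactly $|D|-1$ non-projective self-dual indecomposables; it then writes down the reflection-symmetric paths of Theorem~\ref{thm:description_selfduals}, verifies that each yields a self-dual module using the layer formulae (\ref{form:layers_dual})--(\ref{form:socle_head}) and the composition-factor description of Notation~\ref{nota:paths_dir_mult}, and finally checks that the list has exactly $|D|-1$ members, so that exhaustiveness comes for free from the count. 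Your plan instead rests on the single assertion that the duality functor acts on Janusz data by $\pi\mapsto\rho(\pi)$, which would give both implications at once. That equivariance statement is plausible and, if proved, would be a cleaner argument; but it is precisely the point the paper's counting argument is designed to avoid, and in your write-up it is asserted (``Hence the induced action on Janusz paths is $\pi\mapsto\rho(\pi)$'') rather than derived. Green's theorem only tells you how duality acts on $\op{Irr}(\mathbf{B})$ and on the planar tree; to get the action on paths you must check that Janusz' reconstruction of $X$ from $(\pi,\varepsilon,\mu)$ --- the amalgamation of the diamonds $X_i$ built from clockwise and counter-clockwise walks around shared vertices --- commutes with $k$-duality. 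The paper carries out exactly this kind of verification, but only for uniserial modules (Lemma~\ref{lem:uniserial_selfdual}) and for the specific symmetric paths it lists; for a general path this is the whole theorem, not bookkeeping.

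There is also a concrete inaccuracy that would bite if you executed the plan as written: the parametrization by (path, direction, multiplicity) is \emph{not} a bijection. By Remark~\ref{rem:uniserials}(b), even-length paths with the direction switched are ``mirror images'' of the same module. Since duality swaps head and socle (formula (\ref{form:socle_head})), the path of $M^*$ is the reflected path traversed in the \emph{reverse} order with the direction flipped; after the mirror-image identification this is $(S_t^*,\dots,S_1^*)$ with direction $(1,-1)$. So the correct fixed-point condition is $S_i=S_{t+1-i}^*$ (reflection maps the walk to itself \emph{reversing the traversal}), which is exactly the shape of the paths in Theorem~\ref{thm:description_selfduals}(c),(d) --- not the naive condition $\rho(\pi(M))=\pi(M)$ with the same ordering and direction. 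Your sentence ``this follows because the Janusz parametrization is a bijection'' therefore needs repair, and ``fixed under reflection'' must be interpreted up to mirror image before the two cases of the theorem can be read off.
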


We highlight a family of indecomposable  $\mathbf{B}$-modules, called \emph{hooks} in \cite[\S 2.3]{BC}, and defined as quotients of the PIMs of\/ $\mathbf{B}$ as follows. If\/ $E$ is an irreducible $\mathbf{B}$-module, then its projective cover $P(E)$ has $E=\soc(P(E))=\text{hd}(P(E))$. Moreover there are uniserial (possibly zero) $\mathbf{B}$-modules $Q_a$ and\/ $Q_b$ such that\/ $\rad(P(E))/\soc(P(E))=Q_a\oplus Q_b$. We can visualize $P(E)$  using the following diagram:\vspace{-1mm}
\[
P(E)=\boxed{\begin{smallmatrix} E \\ Q_a\oplus\, Q_b \\ E\end{smallmatrix}}\,.
\]
Then  the two uniserial quotients of\/ $P(E)$ of the form 
$$
H_a:=\boxed{\begin{smallmatrix}E\\Q_a\end{smallmatrix}}\qquad\text{ and }\qquad H_b:=\boxed{\begin{smallmatrix}E\\Q_b\end{smallmatrix}}
$$
are called \textit{hooks}. When $Q_a=0$ (resp. $Q_b=0$), then $H_a=E$  (resp. $H_b=E$) is irreducible. Clearly, $\mathbf{B}$ has exactly $2e$ isomorphism classes of hooks. We also note, that the hooks are precisely the modules occurring in Green's walk around the Brauer tree \cite{Gr}, and they form the boundary of the stable Auslander-Reiten quiver $\Gamma_s(\mathbf{B})$ of\/ $\mathbf{B}$. (See Subsection~\ref{ssec:sARquiver} for this notion.)

Regarding the type of the\/ $\mathbf{B}$-modules, we have:

\begin{Theorem}
Up to isomorphism $\mathbf{B}$ has exactly $|D|-1$ non-projective indecomposable self-dual modules. There are two self-dual hooks in $\mathbf{B}$.
\begin{enumerate}[\rm (1)]
\item Each self-dual indecomposable $\mathbf{B}$-module $M$ has the same type as a self-dual hook\/ $H$, with $H$ determined by the position of\/ $M$ in the stable Auslander-Reiten quiver $\Gamma_s(\mathbf{B})$.
\item The type of\/ $H$ coincides with the type of its Green correspondent\/ $g(H)$ in $\op{N}_G(D)$. Moreover the type of\/ $g(H)$ can be computed from the character table of\/ $\op{N}_G(D)$.
\end{enumerate}
\end{Theorem}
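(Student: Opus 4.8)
The plan is to derive all three assertions from Theorem~\ref{thm:description_selfduals} (the precise form of Theorem~\ref{thm:intro}) together with two structural inputs about a cyclic block: the stable Auslander--Reiten quiver has the known shape $\Gamma_s(\mathbf{B})\cong\Z A_{|D|-1}/\langle\tau^{e}\rangle$, whose $2e$ hooks are precisely its two boundary $\tau$-orbits; and $k$-duality acts on $\Gamma_s(\mathbf{B})$ as an involutive automorphism $\delta$ with $\delta\tau=\tau^{-1}\delta$, whose set of fixed vertices is exactly the set of non-projective self-dual indecomposable $\mathbf{B}$-modules.

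\smallskip\noindent\emph{The two counts.} By Theorem~\ref{thm:description_selfduals} the fixed vertices of $\delta$ are the irreducible modules indexed by edges of the real stem together with the reducible modules whose Janusz path is invariant under the reflection $\rho$ in the real stem. Counting the $\rho$-invariant Janusz paths — equivalently, computing the fixed locus of $\delta$ on $\Z A_{|D|-1}/\langle\tau^{e}\rangle$ — is an elementary combinatorial exercise which yields exactly $|D|-1$ of them. Among these, the ones that are hooks lie on the two boundary $\tau$-orbits of $\Gamma_s(\mathbf{B})$, and exactly two of the $2e$ hooks are $\delta$-fixed: the two short uniserial hooks attached to the end-edges of the real stem, the two long hooks across the stem being interchanged by $\delta$. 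This proves the two counting statements.

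\smallskip\noindent\emph{Part (1).} The crux is that the orthogonal/symplectic type of a self-dual indecomposable is rigidly controlled by its position in $\Gamma_s(\mathbf{B})$. The basic tool is a comparison of the types of self-dual modules joined by a short exact sequence with self-dual outer terms — a projective cover sequence $0\to\Omega M\to P(M)\to M\to 0$, or a self-dual almost-split sequence — combined with the fact that a projective self-dual $kG$-module is orthogonal when $p$ is odd. From this one extracts the key relation: $\Omega$ reverses the type of a self-dual module whenever its syzygy is again self-dual, and more generally $\tau M$ has the same type as $M$ whenever both are self-dual. Propagating these relations through $\Gamma_s(\mathbf{B})$, one shows that every self-dual $M$ has the same type as one of the two self-dual hooks, the hook in question being read off from the location of $M$ relative to the two boundary $\tau$-orbits.

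\smallskip\noindent\emph{Part (2).} Let $H$ be one of the two self-dual hooks and let $Q\le D$ be its vertex. Since $D$ is cyclic, $Q$ is characteristic in $D$, so $\op{N}_G(D)\le\op{N}_G(Q)$ and one may take the Green correspondence $g$ between $G$ and $\op{N}_G(D)$; as induction, restriction and $k$-duality are mutually compatible, $g(H)$ is self-dual. That $H$ and $g(H)$ have the same type is the statement that Green correspondence preserves type for self-dual modules: $H$ occurs with multiplicity one in $\op{Ind}_{\op{N}_G(D)}^{G}g(H)$, induction of a non-degenerate $\epsilon$-form yields a non-degenerate $\epsilon$-form, and a multiplicity-one self-dual indecomposable summand of a module bearing a non-degenerate $\epsilon$-form bears one itself — the possible degeneration of the restricted form on the copy of $H$ being excluded by a vertex argument on the remaining summands. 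Finally $g(H)$ is a self-dual indecomposable module in the Brauer correspondent block of $\op{N}_G(D)$, in which $D$ is normal; it lifts to a self-dual $\mathcal{O}\op{N}_G(D)$-lattice $\widehat{g(H)}$ with multiplicity-free ordinary character, and its type is the common value $\nu(\psi)=|\op{N}_G(D)|^{-1}\sum_{x}\psi(x^{2})$ of the Frobenius--Schur indicators of the real constituents $\psi$ of that character — all of them entries of, or computable from, the character table of $\op{N}_G(D)$. The step I expect to be the real obstacle is the sign-tracking behind ``$\Omega$ reverses type'' and its consequences in Part~(1) — keeping orthogonal versus symplectic straight through the projective cover and almost-split sequences — together with the preservation of type under Green correspondence in Part~(2); the rest is bookkeeping on the Brauer tree and in $\Gamma_s(\mathbf{B})$, plus standard character theory over $\op{N}_G(D)$.
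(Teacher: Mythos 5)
The central step of your Part (1) does not work. You propose to propagate type through $\Gamma_s(\mathbf{B})$ using the relations ``$\Omega$ reverses the type of a self-dual module whenever its syzygy is again self-dual'' and ``$\tau M$ has the same type as $M$ whenever both are self-dual''. But by \eqref{E:dual-omega} one has $\Omega(M)^*\cong\Omega^{-1}(M)$ for self-dual $M$, so $\Omega(M)$ is self-dual only if $\Omega^{2}(M)\cong M$, i.e.\ only if $e=1$; likewise $\tau M=\Omega^{-2}(M)$ is self-dual only if $\Omega^{4}(M)\cong M$, i.e.\ only if $e\le 2$. For $e>2$ both relations are vacuous: consecutive self-dual modules in the quiver sit on levels differing by $2$ and lie in different $\Omega$- and $\tau$-orbits, so nothing propagates and no self-dual module gets linked to a hook. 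The mechanism the paper actually uses is different: after transporting everything to the uniserial Brauer correspondent $\mathbf{B}_1$ via the type-preserving Green correspondence (Lemma \ref{L:self-dual-green}), one observes that a non-degenerate invariant form $B_0$ on a self-dual uniserial module such as $[0^*,me]$ pairs the submodule $[0^*,i]$ with the quotient by $[0^*,me-i]$ and hence induces a non-degenerate form \emph{of the same type} on each self-dual subquotient $[0^*,me-i]/[0^*,i]$; this subquotient argument, not an $\Omega$- or $\tau$-relation, is what ties all members of a family to a single hook (Propositions \ref{prop:sd(B_1)=0}, \ref{prop:sd(B_1)=2}, \ref{prop:sd(B_1)=1}), and it has no counterpart in your proposal.

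Two further points are wrong as stated, although the conclusions they aim at are true. First, the two self-dual hooks are not in general ``the two short uniserial hooks attached to the end-edges of the real stem'': when the real stem reduces to the exceptional vertex there are no such edges, and when $e$ is odd the second self-dual hook is $\Omega^{e}(H)$, whose Green correspondent in $\mathbf{B}_1$ has composition length $|D|-1$ (Corollary \ref{cor:SD_hooks_in_ARquiver} and Proposition \ref{prop:sd(B_1)=1}); the correct count of two follows from the relation $\delta\tau=\tau^{-1}\delta$ alone, as in Lemma \ref{lem:SD_levels_ARquiver}. Second, the type of $g(H)$ is not given by the ordinary Frobenius--Schur indicators of the real constituents of the character of a lift: in the example $C_{15}{:}C_8$ the self-dual hooks afford $\chi_\Lambda$, whose constituents form a non-real conjugate pair, yet one of these hooks is orthogonal and the other symplectic. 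The paper instead computes the type via the twisted indicator $\epsilon_\mu(\chi)=\frac{1}{|G|}\sum_{g}\mu(g)\chi(g^{2})$ of Lemma \ref{L:bilinear}, where $\chi$ lifts the Brauer character of an irreducible (generally non-self-dual) module $S$ with $S^*\cong\lambda S$. Your sketch of type-preservation under Green correspondence is essentially the cited result of Gow--Willems and is acceptable, except that the correspondence must be taken to $\op{N}_G(D_1)$ rather than $\op{N}_G(D)$, since one needs $\op{N}_G(Q)$ to be \emph{contained in} the target subgroup, and for a vertex $Q\le D$ the containment $\op{N}_G(D)\le\op{N}_G(Q)$ goes the wrong way.
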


The paper is structured as follows. In Section \ref{sec:prelim} we set up our notation and prove preliminary results that allow us to describe the number of self-dual indecomposable modules in $\mathbf{B}$.  In Section \ref{sec:uniserial} we investigate our main problem for the case of a uniserial block, where we explicitly describe the type of each self-dual module. In Section \ref{sec:generalcase} we give a concrete classification of the self-dual indecomposable modules in $\mathbf{B}$ in terms of  Janusz' classification of the indecomposable $\mathbf{B}$-modules. Moreover, we give a procedure to determine the type of any such module. Finally, in Appendix~\ref{app:A}, for completeness, we briefly recap Janusz' parametrization  mentioned above.

\section{Notation and preliminary results}\label{sec:prelim}%

We begin with a condensed summary of some standard notation for blocks with cyclic defect groups which will be used throughout this article. All results mentioned below without reference can be found in the foundational paper of E.~Dade \cite{D} or \cite[\S6.5]{B}.

\subsection{General notation}

Let\/ $G$ be a finite group and let\/ $k$ be an algebraically closed field of prime characteristic $p\ne 2$. Moreover, let\/ $\mathbf{B}$ be a real $p$-block of\/ $G$ and let\/ $D$ be a defect group of\/ $\mathbf{B}$. We assume that\/ $D$ is cyclic of order $|D|=p^a$, with $a\geq 1$.

Let\/ $D_i$ be the unique subgroup of\/ $D$ which has order $p^i$, set\/ $N_i:=\op{N}_G(D_i)$ and let\/ $\mathbf{B}_i$ be a $p$-block of\/ $N_i$ such that\/ $\mathbf{B}_i^G=\mathbf{B}$, for each $i=0,\dots,a$. Let\/ $\mathbf{b}$ be a block of\/ $\op{C}_G(D)$ which has defect group $D$ and which satisfies $\mathbf{b}^G=\mathbf{B}$. If\/ $E$ is the stabilizer of\/ $\mathbf{b}$ in $N_G(D)$, then $e:=[E:\op{C}_G(D)]$ gives the inertial index of\/ $\mathbf{B}$.

Consider the action of\/ $E$ on $\op{Irr}(D)$. We choose a set\/ $\Lambda$ of representatives for the orbits of\/ $E$ on the non-trivial characters in $\op{Irr}(D)$. Each such orbit has cardinality $e$. So $|\Lambda|=\frac{|D|-1}{e}$. This number is the multiplicity $m$ mentioned in the introduction.

Recall that\/ $\mathbf{B}$ has $e$ isomorphism classes of irreducible modules, with representatives $E_0,...,E_{e-1}$,  and\/ $e+m$ ordinary irreducible characters, of which $e$ are non-exceptional and\/ $m$ are exceptional. So we use the notation
$$
\op{Irr}(\mathbf{B})=\{\chi_1,\dots,\chi_e\}\cup\{\chi_\lambda\mid\lambda\in\Lambda\},
$$
and set\/ $\chi_\Lambda:=\sum_{\lambda\in\Lambda}\chi_\lambda$ as the sum of the exceptional characters. 

Suppose now that\/ $m=1$. Then there is no canonical choice of exceptional character in~$\mathbf{B}$. (One character can artificially be designated to be exceptional.) Moreover, $E$ has a single orbit on the non-trivial characters in $\op{Irr}(D)$. So $|D|=p$, and there are $p$ characters in $\op{Irr}(\mathbf{B})$. As $p$ is odd, at least one irreducible character in $\mathbf{B}$ is real. So we can and do assume that\/ $\chi_\Lambda$ is real valued in this case.

Recall that\/ $\sigma(\mathbf{B})$ denotes the Brauer tree of\/ $\mathbf{B}$. For convenience we identify $\sigma(\mathbf{B})$ with a connected subset of\/ $\mathbb{C}$. The real irreducible characters are represented by vertices placed on $\mathbb{R}$, with the exceptional vertex placed at\/ $0$. So every self-dual irreducible $\mathbf{B}$-module is represented by an edge in $\mathbb R$. Moreover the complex conjugate of an irreducible character placed at\/ $x+iy$ is placed at\/ $x-iy$. In this way duality on $\sigma(\mathbf{B})$ extends to complex conjugation on $\mathbb{C}$. 
In our drawings we indicate the non-exceptional vertices with hollow nodes and the exceptional vertex with a black node.

It is known that\/ $\mathbf{B}$ has $e|D|$ isomorphism classes of indecomposable modules. We next outline two quite different ways of parametrizing the non-projective indecomposable $\mathbf{B}$-modules. In our discussion we may identify a module with its isomorphism class.

\subsection{Janusz' parametrization of the indecomposable modules}

First, it is convenient to split the isomorphism classes of indecomposable $\mathbf{B}$-modules into three families:
\begin{enumerate}[\hspace{4mm}\rm 1.]
\item  the $e$ isomorphism classes of irreducible $\mathbf{B}$-modules $E_0,...,E_{e-1}$;
\item  the $e$ isomorphism classes of projective indecomposable modules $P(E_0),...,P(E_{e-1})$, which are the projective covers of\/ $E_0,...,E_{e-1}$; and
\item  the $e(|D|-2)$ isomorphism classes of indecomposable $\mathbf{B}$-modules which are neither irreducible nor projective. 
\end{enumerate}

The Loewy structure of the projective indecomposable modules is well-known. We refer to \cite[\S4.18]{B} for a detailed description. Then, G. Janusz \cite{Ja} parametrized the $e(|D|-2)$ non-projective and non-irreducible indecomposable $\mathbf{B}$-modules using certain walks in the Brauer tree $\sigma(\mathbf{B})$, which he called \emph{paths}. We refer the reader to Appendix~\ref{app:A}, Notation \ref{nota:paths_dir_mult} for a more detailed description of this parametrization. In brief, each such indecomposable $\mathbf{B}$-module $X$ can be described by three parameters:
\begin{enumerate}[\hspace{4mm}\rm (1)]   
\item  a \emph{path} on $\sigma(\mathbf{B})$, which is a directed connected subgraph of {Type I} or {Type II} as defined in Notation \ref{nota:paths_dir_mult}, and the edges of which  form an ordered sequence $(S_1,\ldots,S_{t})$ of irreducible $\mathbf{B}$-modules such that the odd-indexed modules are in the head of\/ $X$  and the even-indexed edges are in the socle of\/ $X$, or conversely;
\item a \emph{direction} $(\varepsilon_1,\varepsilon_t)$, where for $i\in\{1,t\}$  we set\/ $\varepsilon_i=:1$ if\/ $S_i$ is in the head of\/ $X$ and\/ $\varepsilon_i=:-1$ if\/ $S_i$ is in the socle of\/ $X$;
\item a \emph{multiplicity} $\mu$, defined as follows. Set\/ $\mu:=0$ if\/ $m=1$, and if\/ $m>1$, then $\mu$ corresponds to the number of times that an irreducible module adjacent to the exceptional vertex on the path occurs as a composition factor of~$X$.
\end{enumerate}

Note in particular that a non-irreducible uniserial $\mathbf{B}$-module $M$ is described by a path of length~$2$ of the form
\[
\xymatrix@R=0.0000pt@C=30pt{
    {\overset{\chi_a}{\bullet}}\ar@<0.0ex>[r]^{T} &{\overset{\chi_b}{\bullet}}\ar@<0.0ex>[r]^{S} &{\overset{\chi_c}{\bullet}}
    }
\]
where the vertices $\chi_a,\chi_b,\chi_c$ belong to $\{\chi_1,\ldots,\chi_e\}\cup\{\chi_{\Lambda}\}$ and\/ $S$ and\/ $T$ are irreducible $\mathbf{B}$-modules. If the direction is $\varepsilon=(1,-1)$, then $T=\text{hd}(M)$ and\/ $S=\soc(M)$.

The hooks of\/ $\mathbf{B}$, which we defined in the introduction, are either irreducible, or uniserial with composition length greater than $1$, thus described by paths of length $2$, as given above. A precise description of all uniserial and all self-dual uniserial $\mathbf{B}$-modules in terms of path, direction, multipliciy is given in Remark \ref{rem:uniserials}(d) and  Lemma~\ref{lem:uniserial_selfdual}.

In Section \ref{sec:generalcase} we will describe all the self-dual indecomposable $\mathbf{B}$-modules in terms of these three parameters.

\subsection{The stable Auslander-Reiten quiver}\label{ssec:sARquiver}

Next, the Auslander-Reiten quiver $\Gamma_s(\mathbf{B})$ of\/ $\mathbf{B}$ is a directed graph whose vertices index the isomorphism classes of the  non-projective indecomposable $\mathbf{B}$-modules. This graph has a particularly rich structure.

Recall that the {\em projective cover} of a $kG$-module $M$ is a certain projective module $P(M)$, together with a short exact sequence of\/ $kG$-modules
$$
0\rightarrow\Omega^1(M)\rightarrow P(M)\rightarrow M\rightarrow 0.
$$
The kernel $\Omega^1(M)$ is called the {\em Heller translate} of\/ $M$.  Dualising the above short exact sequence gives another short exact sequence
$$
0\rightarrow M^*\rightarrow P(M)^*\rightarrow \Omega^1(M)^*\rightarrow 0.
$$
Now the {\em injective envelope of\/ $M^*$} is a certain injective $kG$-module $I(M^*)$, together with a short exact sequence of\/ $kG$-modules
$$
0\rightarrow M^*\rightarrow I(M^*)\rightarrow \Omega^{-1}(M^*)\rightarrow 0.
$$
But\/ $P(M)^*\cong I(M^*)$. So 
\begin{equation}\label{E:dual-omega}
\Omega^1(M)^*\cong\Omega^{-1}(M^*)\,,
\end{equation}
whence
\begin{equation}\label{E:dual-tau}
\Omega^2(M)^*\cong\Omega^{-2}(M^*).
\end{equation}

For a precise description of $\Gamma_s(\mathbf{B})$, we refer the reader to \cite[Section~6.5]{B}. 
Summed up, it is well-known, that in blocks with cyclic defect groups, all modules are periodic of period\/ $2e$.  It follows that the quiver $\Gamma_s(\mathbf{B})$ is a finite tube of type $(\Z/e\Z)A_{|D|-1}${, where the  Auslander-Reiten translate is   $\Omega^{-2}$}. This tube consists of\/ $|D|-1=em$ {\em levels} arranged consecutively from level $1$ to level $em$, with each level consisting of\/ $e$ indecomposable modules arranged in a loop. More precisely, level $\ell$ can be described as\/ $\{\Omega^{-2j}(M)\mid 0\leq j\leq e-1\}$  where $M$ is an arbitrarily chosen module on the level, and for each $j=0,\ldots,e-1$ the right successor of\/ $\Omega^{-2j}(M)$ on the loop is\/  $\Omega^{-2j-2}(M)$. For this reason the levels of\/ $\Gamma_s(\mathbf{B})$ are usually called \emph{$\Omega^2$-orbits}.

The $2e$ modules which constitute the two ends of the tube $(\Z/e\Z)A_{|D|-1}$ are the hooks of\/ $\mathbf{B}$. Moreover, there exists an irreducible $\mathbf{B}$-module $X$ which is a leaf of\/ $\sigma(\mathbf{B})$ such that level $1$ is $ \{\Omega^{-2j}(X)\mid 0\leq j\leq e-1\}$ and level $em$ is $ \{\Omega^{-2j+1}(X)\mid 0\leq j\leq e-1\}${, or conversely}.

We make a preliminary observation:

\begin{Lemma}\label{lem:SD_levels_ARquiver}
If\/ $e$ is odd, then each level of\/ $\Gamma_s(\mathbf{B})$ has $1$ self-dual module, and if\/ $e$ is even, then each level of\/ $\Gamma_s(\mathbf{B})$ has $0$ or $2$ self-dual modules.
\begin{proof}
Let\/ $\ell\in\{1,\dots,em\}$ and suppose that the $\ell$-th level of\/ $\Gamma_s(\mathbf{B})$ consists of the modules $Y_i$, for $i\in\Z$. Then $Y_{i+1}=\Omega^{-2}(Y_i)$ and\/ $Y_i\cong Y_j$ if and only if\/ $i\equiv j \pmod{e}$.

If\/ $e$ is odd, then level $\ell$ contains at least one self-dual module. If\/ $e$ is even, we may assume that level $\ell$ contains a self-dual module. In both cases we choose notation so that\/ $Y_0$ is self-dual. Then $Y_i=\Omega^{-2i}(Y_0)$ and isomorphism \eqref{E:dual-tau} gives $Y_i^*\cong Y_{e-i}$, for $i=0,\dots,e-1$. The congruence $i\equiv e-i \pmod{e}$ has one solution $i=0$ if\/ $e$ is odd, and the two solutions $i=0,e/2$, if\/ $e$ is even.
\end{proof}
\end{Lemma}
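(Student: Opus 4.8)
The plan is to view the contragredient functor $\ast$ as an involution on the stable Auslander--Reiten quiver $\Gamma_s(\mathbf{B})$ and to count its fixed points one level at a time. First I would record that, since each level is an $\Omega^2$-orbit and $(\Omega^2 M)^\ast\cong\Omega^{-2}(M^\ast)$ by \eqref{E:dual-tau}, the functor $\ast$ sends the level of $M$ to the level of $M^\ast$; thus it induces an involutive permutation of the set $\{1,\dots,em\}$ of levels. A level that $\ast$ sends to a \emph{different} level carries no self-dual module (a self-dual $M$ on level $\ell$ would have $M^\ast=M$ on level $\ell$, forcing that level to be fixed). So the whole problem reduces to analysing a level $\ell$ with $\ast(\ell)=\ell$, together with ruling out moved levels where that is needed.

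For a fixed level $\ell$, I would write it as $\{Y_i\mid i\in\Z/e\Z\}$ with $Y_{i+1}=\Omega^{-2}(Y_i)$, so that $\tau:=\Omega^{-2}$ acts on the level as an $e$-cycle. The identity $(\Omega^2 M)^\ast\cong\Omega^{-2}(M^\ast)$ shows that $\ast$ conjugates $\tau$ to $\tau^{-1}$ on this level, i.e.\ $\ast$ restricts to a reflection of the $e$-cycle (an element of the dihedral group inverting the rotation). A reflection of an $e$-cycle has exactly one fixed point when $e$ is odd, and either none or two when $e$ is even. Concretely, once a self-dual $Y_0$ is chosen, $Y_i^\ast\cong\Omega^{2i}(Y_0)\cong\Omega^{-2(e-i)}(Y_0)=Y_{e-i}$ (using that the period is $2e$), so the self-dual modules on the level are those with $2i\equiv 0\pmod e$.

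Assembling this: when $e$ is even, every level is either moved by $\ast$ (zero self-dual modules) or fixed (zero or two), so the count is $0$ or $2$ in all cases and nothing further is needed. When $e$ is odd I must in addition show that $\ast$ fixes \emph{every} level. For this I would use that the arrows of $\Gamma_s(\mathbf{B})$ join level $\ell$ only to levels $\ell\pm1$, so the levels form a path $1-2-\cdots-em$; since $\ast$ induces an automorphism of this path, it is enough to know that $\ast$ fixes one endpoint, i.e.\ one of the two ends of the tube. The two ends are exactly the $2e$ hooks of $\mathbf{B}$, so it suffices to exhibit a single self-dual hook: the end containing it is then fixed, hence by the path argument all levels are fixed, and the reflection computation above gives exactly one self-dual module on each level.

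The hard part is precisely this last input — the existence of a self-dual hook, equivalently the assertion that $\ast$ does not interchange the two ends of the tube. This is the statement that the reflection in the real stem has a fixed hook in Green's walk around $\sigma(\mathbf{B})$; it genuinely uses the real-block hypothesis and Green's description of the real stem, and it is the point I expect to require the most care (it is, in effect, the content behind the claim that $\mathbf{B}$ has self-dual hooks). Everything else is formal manipulation of the $\Omega$-action on the tube.
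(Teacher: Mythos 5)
Your core computation is the same as the paper's: once a self-dual $Y_0$ is fixed on a level, \eqref{E:dual-tau} gives $Y_i^\ast\cong Y_{e-i}$, and the count comes down to the congruence $2i\equiv 0\pmod e$, exactly as in the printed proof. Where you differ is in being more scrupulous about the preliminary step: the paper simply asserts that for $e$ odd every level contains a self-dual module (and for $e$ even reduces at once to a level containing one), whereas you correctly observe that this requires knowing that duality stabilises each level, and you reduce that to showing duality does not interchange the two boundary $\Omega^2$-orbits of the tube --- equivalently, that some hook and its dual lie on the same end. You flag this as the one unproved input. That input is precisely what the paper establishes at the start of Corollary~\ref{cor:SD_hooks_in_ARquiver}: since duality acts as reflection in the real stem of $\sigma(\mathbf{B})$, a hook $H_0$ and $H_0^\ast$ lie on the same end $\Omega^2$-orbit, whence $\Omega^{-i}(H_0)$ is self-dual for a suitable $i$; that argument uses only \eqref{E:dual-omega} and Green's description of the real stem, not the present lemma, so importing it into your proof creates no circularity. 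In short: your proposal is the paper's argument plus an honest accounting of a step the paper elides inside the lemma and supplies only in the following corollary; to make it self-contained you would just move that ``same end'' observation forward. (One minor caveat: describing the action of $\ast$ on a fixed level as a ``reflection'' of the $e$-cycle is slightly loose for $e\le 2$, where $\tau=\tau^{-1}$, but your explicit congruence computation covers those cases anyway.)
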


\begin{Corollary}\label{cor:SD_hooks_in_ARquiver}%
The block $\mathbf{B}$ has two isomorphism classes of self-dual hooks.

If\/ $H$ is a self-dual hook, then the other self-dual hook is $\Omega^e(H)$.

If\/ $e$ is even, $H$ and $\Omega^e(H)$ lie on the same end\/ $\Omega^2$-orbit of\/ $\Gamma_s(\mathbf{B})$. If\/ $e$ is odd, then $H$ lies on one end\/ $\Omega^2$-orbit and $\Omega^e(H)$ lies on the other end\/ $\Omega^2$-orbit.
\begin{proof}
To start with, we prove that\/ $\mathbf{B}$ contains at least one self-dual hook. First, it follows from the definition that the dual of a hook of\/ $\mathbf{B}$ is again a hook of\/ $\mathbf{B}$. Now, if\/ $\mathbf{B}$ contains a hook $H_0$ such that  $H_0^\ast\ncong H_0$, then $e>1$ and there is $1<i<e$ such that $H_0^\ast\cong \Omega^{-2i}(H_0)$.
Indeed, as duality acts as reflection in the real stem, $H_0$ and its dual must lie on the same end\/ $\Omega^2$-orbit of\/ $\Gamma_s(\mathbf{B})$. Then, the hook $\Omega^{-i}(H_0)$ is self-dual since
\[
\Omega^{-i}(H_0)^\ast\cong \Omega^i(H_0^\ast)\cong \Omega^{i-2i}(H_0)=\Omega^i(H_0)
\]
by isomorphism \eqref{E:dual-omega}.

Next it is clear that if\/ $H$ is a self-dual hook, then so is $\Omega^e(H)$ as
\[
   \Omega^e(H)^\ast\cong\Omega^{-e}(H^\ast)\cong \Omega^{-e}(H)\cong \Omega^{-e+2e}(H)= \Omega^e(H) 
\]
again by isomorphism \eqref{E:dual-omega}. 

If\/ $e$ is odd, then clearly $H$ and $\Omega^e(H)$ lie in different end \/ $\Omega^2$-orbits. If\/ $e$ is even, then $H$ and $\Omega^e(H)$ lie on the same end \/ $\Omega^2$-orbit of\/ $\Gamma_s(\mathbf{B})$ and $\Omega^{-2j+1}(H)$ cannot be self-dual for any $0\leq j\leq e-1$ by isomorphism \eqref{E:dual-omega}.  Finally, Lemma~\ref{lem:SD_levels_ARquiver} shows that $H$ and $\Omega^e(H)$ account for all the self-dual hooks of\/ $\mathbf{B}$.
\end{proof}
\end{Corollary}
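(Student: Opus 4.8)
The plan is to argue entirely inside the stable Auslander--Reiten quiver, using that $\Gamma_s(\mathbf{B})$ is the tube $(\Z/e\Z)A_{|D|-1}$, that the $2e$ hooks of $\mathbf{B}$ are precisely the modules on its two end $\Omega^2$-orbits, say $\mathcal{O}$ and $\mathcal{O}'$, and that $\Omega$ sends this set of $2e$ hooks to itself, carrying $\mathcal{O}$ to $\mathcal{O}'$ because, in the notation of the excerpt, $\mathcal{O}=\{\Omega^{-2j}(X):0\le j\le e-1\}$ while $\mathcal{O}'=\{\Omega^{-2j+1}(X):0\le j\le e-1\}$, whereas $\Omega^2$ fixes each of $\mathcal{O}$, $\mathcal{O}'$ setwise.

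First I would produce a self-dual hook. The dual of a hook is a hook, and since duality acts as reflection in the real stem of $\sigma(\mathbf{B})$ (Green), a hook and its dual lie on the same end $\Omega^2$-orbit of $\Gamma_s(\mathbf{B})$. If every hook of $\mathbf{B}$ is self-dual then in particular a self-dual hook exists; otherwise pick a hook $H_0$ with $H_0^\ast\ncong H_0$, so that $H_0^\ast\cong\Omega^{-2i}(H_0)$ for some $0<i<e$. Then by \eqref{E:dual-omega},
\[
\Omega^{-i}(H_0)^\ast\cong\Omega^{i}(H_0^\ast)\cong\Omega^{i-2i}(H_0)=\Omega^{-i}(H_0),
\]
and $\Omega^{-i}(H_0)$ is again a hook, so once more $\mathbf{B}$ has a self-dual hook $H$.

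Now fix any self-dual hook $H$. By \eqref{E:dual-omega} and $\Omega^{2e}\cong\mathrm{id}$,
\[
\Omega^{e}(H)^\ast\cong\Omega^{-e}(H^\ast)\cong\Omega^{-e}(H)\cong\Omega^{e}(H),
\]
so $\Omega^e(H)$ is a self-dual hook, and $\Omega^e(H)\ncong H$ because every non-projective $\mathbf{B}$-module has $\Omega$-period exactly $2e$. If $e$ is odd, $\Omega^e$ is an odd power of $\Omega$, so $H$ and $\Omega^e(H)$ lie on the two distinct end $\Omega^2$-orbits; by Lemma~\ref{lem:SD_levels_ARquiver} each end orbit contains exactly one self-dual module, which is a hook, so $\{H,\Omega^e(H)\}$ exhausts the self-dual hooks. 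If $e$ is even, $\Omega^e$ is an even power of $\Omega$, so $H$ and $\Omega^e(H)$ lie on the same end orbit, which thus has at least two self-dual modules, hence exactly two by Lemma~\ref{lem:SD_levels_ARquiver}, namely $H$ and $\Omega^e(H)$; a module on the other end orbit has the form $\Omega^{-2j+1}(H)$ with $0\le j\le e-1$, and \eqref{E:dual-omega} shows it is self-dual iff $2j-1\equiv-(2j-1)\pmod{2e}$, i.e. iff $e\mid 2j-1$, which is impossible for even $e$; so again $\{H,\Omega^e(H)\}$ exhausts the self-dual hooks. In both parities all three assertions of the corollary have been established along the way.

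I expect the one genuinely nontrivial input to be the claim that a hook and its dual lie on the same end $\Omega^2$-orbit, i.e. the precise form of ``duality acts as reflection in the real stem'': this is what prevents duality from interchanging $\mathcal{O}$ and $\mathcal{O}'$, and hence what forces a self-dual hook to exist at all. Once that is granted, the remainder is the bookkeeping above with Lemma~\ref{lem:SD_levels_ARquiver}, \eqref{E:dual-omega}, and the $2e$-periodicity of $\Omega$.
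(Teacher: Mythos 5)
Your proposal is correct and follows essentially the same route as the paper's proof: existence of a self-dual hook via the reflection-in-the-real-stem argument applied to $\Omega^{-i}(H_0)$, then self-duality of $\Omega^e(H)$ from \eqref{E:dual-omega} and $2e$-periodicity, and finally the count via Lemma~\ref{lem:SD_levels_ARquiver} together with the parity analysis of the end $\Omega^2$-orbits. The only differences are cosmetic: you spell out explicitly why $\Omega^{-2j+1}(H)$ cannot be self-dual for even $e$ (the congruence $e\mid 2j-1$), which the paper leaves as an assertion.
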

\newpage

\subsection{Socle and radical layers and duality}

In order to determine the self-dual indecomposable modules we use the following well-known formulae for the socle and radical layers of the dual of a $kG$-module $X$. (See e.g. \cite[Section 6.3]{We}.) For each non-negative integer $n$, we have
\begin{equation}\label{form:layers_dual}
        \soc^{n+1}(X^\ast)/\soc^{n}(X^\ast)
        \cong 
        \big(\rad^{n}(X)/\rad^{n+1}(X)\big)^{\ast}\,.  \tag{F1}      
\end{equation}
In particular, the socle and the head of\/ $X$ and\/ $X^{\ast}$ are related by 
\begin{equation}\label{form:socle_head}
       \soc(X^\ast)\cong \hd(X)^\ast
       \qquad\text{ and }\qquad 
       \hd(X^\ast)\cong \soc(X)^\ast\,.  \tag{F2} 
\end{equation}

\section{Uniserial blocks}\label{sec:uniserial}%

We retain the assumptions of 2.1. In particular $D_1$ is the unique subgroup of\/ $D$ of order $p$ and\/ $\mathbf{B}_1$ is the Brauer correspondent of\/ $\mathbf{B}$ in $N_G(D_1)$. Now $D$ is also a defect group of\/ $\mathbf{B}_1$ and the Brauer tree $\sigma(\mathbf{B}_1)$ is a star, with exceptional central vertex and\/ $e$ edges attached to this vertex. We adopt the following additional notation. The edges (and irreducible $\mathbf{B}_1$-modules) are labelled\/ $E_0,\dots,E_{e-1}$, in counterclockwise order. We extend notation by defining $E_z:=E_{\ov z}$, where $\ov z$ is the residue of\/ $z$ modulo $e$, for all $z\in\Z$.
\[  
\begin{tikzpicture}[solidnode/.style={circle, fill=black!92, inner sep=0pt, minimum size=2.5mm},hollownode/.style={circle, fill=white!84, inner sep=0pt, minimum size=2.5mm}, auto,bend left]
        \draw (4,2)   node[hollownode,draw]{} -- node[above right] {\tiny $E_0$} (2,2) node[solidnode,draw]{};
        \draw (3.73,3)  node[hollownode,draw]{} --  (2,2) node[solidnode,draw]{};
        \draw (2,2)   node[solidnode,draw]{} --  node[right] {\tiny $\,\,E_{e-1}^{\phantom{X}}$} (3.73,1) node[hollownode,draw]{}  ;
        \draw (2,2) node[solidnode,draw]{} -- node[above right] {} (3,3.73) node[hollownode,draw]{} ;
        \draw (2,2) node[solidnode,draw]{} -- node[below right] {\tiny $\phantom{I}E_{e-2}$} (3,0.27) node[hollownode,draw]{} ;
        \draw (2,2) node[solidnode,draw]{} -- node[above] {} (2,4) node[hollownode,draw]{} ; 
        \draw (2,2) node[solidnode,draw]{} -- node[above left] {\tiny $E_i$} (0,2) node[hollownode,draw]{};
        \draw (2,2) node[solidnode,draw]{} -- node[above] {\tiny $\,\,E_{i-1}\,\,$} (0.27,3) node[hollownode,draw]{};
        \draw (2,2) node[solidnode,draw]{} -- node[below] {\tiny $\,\,E_{i+1}$} (0.27,1) node[hollownode,draw]{};
          \draw[dashed] (2.2,0.5) to (0.9,0.8);
          \draw[dashed] (0.7,3.1) to (1.8,3.7);
        \node at (3,2.9) {\tiny $E_1$};
        \node at (2.45,3.3) {\tiny $E_2$};
       % \node at (2,-0.8) {Case $m\geq 2$.};
\end{tikzpicture}  
\] 

Each non-projective indecomposable module in $\mathbf{B}$ has a Green vertex $D_i$, for some $i=1,\dots,a$. As $\op{N}_G(D_i)\leq\op{N}_G(D_1)$, Green correspondence establishes a bijection between the non-projective indecomposable $\mathbf{B}$-modules and the non-projective indecomposable $\mathbf{B}_1$-modules. It also induces a graph isomorphism $\Gamma_s(\mathbf{B})\cong\Gamma_s(\mathbf{B}_1)$. Moreover, we have:

\begin{Lemma}\label{L:self-dual-green}
Green correspondence restricts to a type-preserving bijection between the self-dual indecomposable $\mathbf{B}$-modules and the self-dual indecomposable $\mathbf{B}_1$-modules.
\begin{proof}
Let\/ $M$ be a self-dual indecomposable $\mathbf{B}$-indecomposable module which has vertex $D_i$, where $D_i\ne1$. Then $M{\downarrow_{\op{N}_G(D_1)}}=N\oplus X$, where $N$ is the Green correspondent of\/ $M$ with respect to $D_i$ in $\op{N}_G(D_1)$. So $N$ belongs to $\mathbf{B}_1$ and has vertex $D_i$ and no indecomposable summand of\/ $X$ has vertex $D_i$. Taking duals, we get\/ $$M{\downarrow_{\op{N}_G(D_1)}}\cong M^*{\downarrow_{\op{N}_G(D_1)}}=N^*\oplus X^*\,.$$ As $N^*$ has vertex $D_i$, it follows that\/ $N^*\cong N$.

The types of\/ $M$ and\/ $N$ coincide, according to the main result of \cite{GW}.
\end{proof}
\end{Lemma}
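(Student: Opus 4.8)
The plan is to establish two things separately: that the Green-correspondence bijection between the non-projective indecomposable $\mathbf{B}$-modules and the non-projective indecomposable $\mathbf{B}_1$-modules commutes with $k$-duality — which yields a bijection on the self-dual modules — and that Green correspondence preserves the orthogonal/symplectic type of a self-dual module, which is the input imported from \cite{GW}.

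For the first point, write $f$ for the Green-correspondence bijection. Since $D_1$ is characteristic in each cyclic group $D_i$, we have $\op{N}_G(D_i)\le\op{N}_G(D_1)$, so for a non-projective indecomposable $\mathbf{B}$-module $M$ with vertex $D_i$ one may form the Green correspondent with respect to the pair $(D_i,\op{N}_G(D_1))$: $f(M)$ is the unique indecomposable summand of $M{\downarrow_{\op{N}_G(D_1)}}$ with vertex $D_i$, say $M{\downarrow_{\op{N}_G(D_1)}}=f(M)\oplus X$, where every indecomposable summand of $X$ has vertex contained in a proper subgroup of $D_i$ (one uses that ${}^gD_i\cap D_i=D_i$ forces $g\in\op{N}_G(D_i)\le\op{N}_G(D_1)$). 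Now apply $(-)^*$: as $kG$ is a symmetric algebra, duality commutes with restriction and preserves vertices, so $M^*{\downarrow_{\op{N}_G(D_1)}}=f(M)^*\oplus X^*$ with $f(M)^*$ indecomposable of vertex $D_i$ and no summand of $X^*$ of vertex $D_i$; by the uniqueness clause in Green's theorem this forces $f(M^*)\cong f(M)^*$. Since $\mathbf{B}$ is real, $M^*$ again belongs to $\mathbf{B}$, so $f(M^*)$ is defined, and as $f$ is injective we conclude $M\cong M^*$ if and only if $f(M)\cong f(M)^*$. Hence $f$ restricts to a bijection between the self-dual modules of $\mathbf{B}$ and those of $\mathbf{B}_1$, carrying a self-dual $M$ of vertex $D_i$ to a self-dual $\mathbf{B}_1$-module of the same vertex.

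For the second point I would quote the main result of \cite{GW}: if a self-dual indecomposable $kG$-module $M$ has Green correspondent $N$ in a subgroup $H$ with $\op{N}_G(Q)\le H$ for $Q$ a vertex of $M$, then $M$ and $N$ have the same type. In our situation the $G$-invariant quadratic (resp.\ symplectic) form on a self-dual $M$ restricts to an $\op{N}_G(D_1)$-invariant form on the summand $f(M)$ of $M{\downarrow_{\op{N}_G(D_1)}}$, and the content of \cite{GW} is precisely that this restriction stays non-degenerate and of the same type; combined with the first point this gives the type-preserving bijection. The duality bookkeeping in the second paragraph is routine Green-correspondence manipulation; the genuinely substantive ingredient is the preservation of type, and that is where I would simply invoke \cite{GW} rather than retrace the non-trivial argument showing that the invariant form descends to the Green correspondent.
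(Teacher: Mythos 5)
Your proposal is correct and follows essentially the same route as the paper: restrict $M$ to $\op{N}_G(D_1)$, use the uniqueness of the indecomposable summand with vertex $D_i$ to identify the Green correspondent of the dual with the dual of the Green correspondent, and invoke the main result of \cite{GW} for preservation of type. The only (harmless) difference is that you record the compatibility $f(M^*)\cong f(M)^*$ explicitly and deduce the self-duality equivalence in both directions, whereas the paper writes out only the forward implication.
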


As $\sigma(\mathbf{B}_1)$ is a star, $\mathbf{B}_1$ is a uniserial algebra. This means that each indecomposable $\mathbf{B}_1$-module is uniserial, that is, has a unique composition series. One consequence of this is that we may assume that the modules on the $\ell$-th level of the stable Auslander-Reiten quiver of\/ $\mathbf{B}_1$ have composition length $\ell$, for each  $\ell=1,\dots,em$. (See e.g. \cite[\S6.4]{B}.)

We use $[i,\ell]$ to denote  the unique uniserial $\mathbf{B}_{1}$-module with $\ell$ composition factors $E_i,E_{i-1}$, $E_{i-2},\dots,E_{i-\ell+1}$, listed from socle to head. Notice that\/ $[i,\ell]$ is a submodule of\/ $[j,k]$ if and only if\/ $i=j$ and\/ $\ell\leq k$, while $[i,\ell]$ is a factor module of\/ $[j,k]$ if and only if\/ $i-\ell\equiv j-k$ mod\/ $e$ and\/ $\ell\leq k$.

For each residue $i$ modulo $e$, define $i^*$ so that\/ $E_i^*\cong E_{i^*}$. It follows from \cite{Gr} that
$$
i^*=0^*-i,\quad\mbox{for all $i$.}
$$
Moreover, we may choose notation so that\/ $E_0^*\cong E_0$ or $E_0^*\cong E_{-1}$. In the first case $i^*=-i$ and in the second case $i^*=-i-1$.

\begin{Lemma}\label{lem:sdB_1general}
The block\/ $\mathbf{B}_1$ has $|D|-1$ isomorphism classes of non-projective self-dual indecomposable modules.

More precisely, $[i,\ell]$ is self-dual if and only if $\ell\equiv i-i^*+1\,\pmod{e}$.
\begin{proof}
As $[i,\ell]$ has head\/ $E_{i-\ell+1}$, (\ref{form:layers_dual}) implies that\/ $[i,\ell]^*$ has socle $E_{i-\ell+1}^*$. Moreover $[i,\ell]^*$ has the same composition length, namely $\ell$, as $[i,\ell]$. It follows that  $[i,\ell]^*\cong [(i-\ell+1)^*,\ell]$. This gives the congruence condition which determines if\/ $[i,\ell]$ is self-dual.

For each of the $e$ possible values of\/ $i$, there will be $m$ values of\/ $\ell$ satisfying the stated congruence condition. This gives the total count of\/ $em=|D|-1$ non-projective self-dual indecomposable $\mathbf{B}_1$-modules.
\end{proof}
\end{Lemma}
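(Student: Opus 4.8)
The plan is to exploit that $\mathbf{B}_1$ is a uniserial (Nakayama) algebra, so that every non-projective indecomposable $\mathbf{B}_1$-module is one of the $[i,\ell]$, and such a module is determined up to isomorphism by its socle $E_i$ (equivalently, the residue $i$ modulo $e$) together with its composition length $\ell\in\{1,\dots,em\}$. The whole problem then reduces to identifying the isomorphism type of $[i,\ell]^\ast$ inside this parametrization and solving the resulting congruence.

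First I would compute $[i,\ell]^\ast$. Since $[i,\ell]$ has composition factors $E_i,E_{i-1},\dots,E_{i-\ell+1}$ listed from socle to head, its head is $E_{i-\ell+1}$, so \eqref{form:socle_head} gives $\soc([i,\ell]^\ast)\cong\hd([i,\ell])^\ast\cong E_{i-\ell+1}^\ast\cong E_{(i-\ell+1)^\ast}$. As duality preserves composition length, $[i,\ell]^\ast$ is uniserial of length $\ell$ with socle $E_{(i-\ell+1)^\ast}$, hence $[i,\ell]^\ast\cong[(i-\ell+1)^\ast,\ell]$. By the classification of submodules recalled just before the lemma, this is isomorphic to $[i,\ell]$ exactly when $i\equiv(i-\ell+1)^\ast\pmod e$. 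Substituting $j^\ast=0^\ast-j$ (valid by \cite{Gr}) turns this into $i\equiv 0^\ast-i+\ell-1\pmod e$, i.e. $\ell\equiv 2i-0^\ast+1=i-i^\ast+1\pmod e$, which is the asserted condition.

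For the count, fix a residue $i\in\{0,\dots,e-1\}$: the congruence pins down a single residue class modulo $e$, and the range $\{1,\dots,em\}$ is a disjoint union of $m$ blocks of $e$ consecutive integers, hence contains exactly $m$ admissible values of $\ell$. Running over the $e$ choices of $i$, and using that the $[i,\ell]$ with $0\le i\le e-1$ and $1\le\ell\le em$ are pairwise non-isomorphic and exhaust the non-projective indecomposable $\mathbf{B}_1$-modules, this gives $em=|D|-1$ self-dual ones in all. I do not expect a genuine obstacle here: once $\mathbf{B}_1$ is recognized as uniserial everything is forced, and the only points needing care are reading the socle/head off the $[i,\ell]$ notation with the correct orientation when applying \eqref{form:socle_head}, and checking that $i-i^\ast$ depends only on $i$ modulo $e$ so that the congruence is well posed.
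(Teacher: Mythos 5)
Your proposal is correct and follows essentially the same route as the paper's proof: identify $[i,\ell]^\ast\cong[(i-\ell+1)^\ast,\ell]$ via the socle/head duality formulae, translate self-duality into the congruence $\ell\equiv i-i^\ast+1\pmod e$ using $j^\ast=0^\ast-j$, and count $m$ admissible lengths per residue $i$. The only cosmetic difference is that you invoke \eqref{form:socle_head} where the paper cites \eqref{form:layers_dual}, and you spell out the congruence manipulation that the paper leaves implicit.
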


\begin{Corollary}\label{cor:numberselfduals}%
The block\/ $\mathbf{B}$ has exactly $|D|-1$ isomorphism classes of non-projective self-dual indecomposable modules. 
\begin{proof}
This follows from Lemmas \ref{L:self-dual-green} and \ref{lem:sdB_1general}.
\end{proof}
\end{Corollary}

From \cite{M}, $\mathbf{B}$ (and thus also $\mathbf{B}_1$) has a real exceptional character if and only if\/ $\mathbf{b}$ is real and\/ $e$ is even, or $\mathbf{b}$ is not real and\/ $e$ is odd. Also, all exceptional characters are real if any one of them is real. Finally, if\/ $e$ is even but\/ $\mathbf{b}$ is not real (so $\mathbf{B}_1$ has no real exceptional characters), and the real stem of\/ $\mathbf{B}_1$ contains two edges, then the Frobenius-Schur indicators of the two real (non-exceptional) irreducible characters in $\mathbf{B}_1$ are $+1$ and\/ $-1$, in some order.

We distinguish three cases, depending on the number of self-dual irreducible $\mathbf{B}_1$-modules:

\subsection{Case 1: $\mathbf{B}_1$ has no self-dual irreducible modules}

Then $e$ is even and the real stem of\/ $\sigma(\mathbf{B}_1)$ consists of the exceptional vertex. Set\/ $h:=e/2$ and assume that\/ $0^*=-1$. 

\[
\begin{tikzpicture}[solidnode/.style={circle, fill=black!92, inner sep=0pt, minimum size=2.5mm},hollownode/.style={circle, fill=white!84, inner sep=0pt, minimum size=2.5mm}, auto,bend left]
        \draw (3.73,3)  node[hollownode,draw]{} --  (2,2) node[solidnode,draw]{};
        \draw (2,2)   node[solidnode,draw]{} --  (3.73,1) node[hollownode,draw]{}  ;
        \draw (2,2) node[solidnode,draw]{} -- node[above right] {} (3,3.73) node[hollownode,draw]{};
        \draw (2,2) node[solidnode,draw]{} -- node[right] {} (3,0.27) node[hollownode,draw]{};
        \draw (2,2) node[solidnode,draw]{} -- (0.27,3) node[hollownode,draw]{};
        \draw (2,2) node[solidnode,draw]{} -- (0.27,1) node[hollownode,draw]{};
        \node at (3.2,2.4) {\tiny $E_0$};
        \node at (3.25,1.6) {\tiny $E_{e-1}$};
        \node at (2.2,3) {\tiny $E_1$};
        \node at (3.05,0.9) {\tiny $E_{e-2}$}; 
        \node at (0.9,2.3) {\tiny $E_{h-1}$};
        \node at (1.2,1.15) {\tiny $E_h$};
        \draw[dashed] (2.5,0.45) to (0.9,0.8);
        \draw[dashed] (0.7,3.1) to (2.5,3.6);
\end{tikzpicture}  
\]

Then $i^*\equiv-1-i \pmod{e}$, and in particular $h^*=h-1$. Also $[i,\ell]$ is self-dual if and only if $\ell\equiv 2i+2\pmod{e}$, by Lemma \ref{lem:sdB_1general}.

\begin{Proposition}\label{prop:sd(B_1)=0}
The non-projective self-dual indecomposable $\mathbf{B}_1$-modules are divided into two families:
{\begin{enumerate}[\rm(1)]
\item
The modules $[i,2i+2]$, for $i=0,1,\dots,mh-1$:
$$
\begin{tikzcd}[column sep=tiny]
\boxed{\substack{E_0^*\\E_0}} &&
\boxed{\substack{E_1^*\\E_0^*\\E_0\\E_1}} &&
\boxed{\substack{E_2^*\\E_1^*\\E_0^*\\E_0\\E_1\\E_2}} &&
%\boxed{\substack{E_{-4}\\E_{-3}\\E_{-2}\\E_{-1}\\E_0\\E_1\\E_2\\E_3}} &&
\ldots
\end{tikzcd}
$$
These modules have the same type as the hook module $[(mh)^*,me]$.
\item
The modules $[h+i,2i+2]$, for $i=0,1,\dots,mh-1$:
$$
\begin{tikzcd}[column sep=tiny]
\boxed{\substack{E_h^*\\E_h}} &&
\boxed{\substack{E_{h+1}^*\\E_h^*\\E_h\\E_{h+1}}} &&
\boxed{\substack{E_{h+2}^*\\E_{h+1}^*\\E_h^*\\E_h\\E_{h+1}\\E_{h+2}}} &&
\ldots
\end{tikzcd}
$$
These modules have the same type as the hook module $[(mh+h)^*,me]$.
\end{enumerate}
There is one self-dual module from each family in each even numbered level of\/ $\Gamma_s(\mathbf{B}_1)$, and none in each odd numbered level.}
\end{Proposition}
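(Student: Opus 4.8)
The plan is to first nail down the list of self-dual modules using Lemma~\ref{lem:sdB_1general}, and then to prove the type assertions by walking down each family from the self-dual hook sitting at its top, transporting a fixed invariant form at each step.

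\emph{Step 1 (the classification).} Since $0^*=-1$ in Case~1 we have $i^*\equiv -1-i\pmod e$, so Lemma~\ref{lem:sdB_1general} tells us that $[i,\ell]$ is self-dual precisely when $\ell\equiv 2i+2\pmod e$. Because $e=2h$ is even, such an $\ell$ must be even; conversely, for each even level $\ell=2t$ with $1\le t\le hm$, the congruence $2i\equiv 2t-2\pmod{2h}$ has exactly the two solutions $i\equiv t-1\pmod h$ lying in $\{0,\dots,e-1\}$. Recalling that the composition length of an indecomposable $\mathbf B_1$-module equals its level in $\Gamma_s(\mathbf B_1)$, these two solutions produce exactly the two self-dual modules on level $2t$, which are the level-$2t$ members $[t-1,2t]$ of family~(1) and $[h+t-1,2t]$ of family~(2). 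They are non-isomorphic (their socles $E_{t-1}$ and $E_{h+t-1}$ differ, as $h\not\equiv 0\pmod e$) and non-projective (composition length $\le em<em+1$), and as $t$ ranges over $1,\dots,hm$ we obtain $2hm=em=|D|-1$ of them, matching the count in Lemma~\ref{lem:sdB_1general}. Hence families~(1) and~(2) are disjoint and exhaust the non-projective self-dual indecomposable $\mathbf B_1$-modules, and no such module lies on an odd level. The Loewy diagrams displayed in~(1) and~(2) are obtained by listing the composition factors of $[i,2i+2]$, respectively $[h+i,2i+2]$, from socle to head and using the identities $E_{-j-1}=E_{j^*}=E_j^*$, respectively $E_{h-j-1}=E_{(h+j)^*}=E_{h+j}^*$. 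Finally, the level-$em$ members $[hm-1,em]$ and $[h+hm-1,em]$ of the two families coincide with $[(mh)^*,me]$ and $[(mh+h)^*,me]$ respectively — the exponents agree modulo $e$ because $2mh\equiv 0$ and $2h\equiv 0\pmod e$ — and by Corollary~\ref{cor:SD_hooks_in_ARquiver} these are precisely the two self-dual hooks of $\mathbf B_1$ (both lying on the same end of the tube, as $e$ is even and Case~1 has no self-dual irreducible modules).

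\emph{Step 2 (propagating the type within a family).} Fix $0\le i\le hm-2$ and set $M:=[i+1,2i+4]$, a self-dual module in family~(1). By \cite[Proposition 2.1]{W} it carries a non-degenerate $\op{N}_G(D_1)$-invariant bilinear form $b$ which is symmetric if $M$ has type $+$ and alternating if $M$ has type $-$. Let $N:=\soc(M)=E_{i+1}$. Since $M$ is indecomposable with simple socle, $N$ is its unique minimal nonzero submodule, so $N\subseteq N^\perp$, i.e.\ $N$ is totally isotropic. Now $N^\perp$ is a submodule of the uniserial module $M$ of $k$-dimension $\dim_k M-\dim_k E_{i+1}$; since the submodules of $M$ form a chain and $\dim_k E_{(i+1)^*}=\dim_k E_{i+1}^*=\dim_k E_{i+1}$, this pins down $N^\perp=[i+1,2i+3]$, so that $N^\perp/N\cong [i+1,2i+3]/\soc\cong [i,2i+2]$, which is a member of family~(1). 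As $N$ is totally isotropic, $b$ descends to a non-degenerate $\op{N}_G(D_1)$-invariant form on $N^\perp/N\cong[i,2i+2]$ of the same kind (symmetric, resp.\ alternating) as $b$; by the uniqueness of the type this shows $[i,2i+2]$ has the same type as $M=[i+1,2i+4]$.

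\emph{Step 3 (conclusion).} A descending induction on $i$, starting from the hook $[(mh)^*,me]=[hm-1,em]$, then gives that every module in family~(1) has the type of $[(mh)^*,me]$. Family~(2) is handled identically, using $N=\soc([h+i+1,2i+4])=E_{h+i+1}$ and the hook $[(mh+h)^*,me]=[h+hm-1,em]$ as base case. I expect the only delicate point to be the identification $N^\perp/N\cong[i,2i+2]$ in Step~2: it hinges on $\dim_k E_{(i+1)^*}=\dim_k E_{i+1}$, valid because duality permutes the simple $\mathbf B_1$-modules, together with the uniseriality of $\mathbf B_1$, which forces $N^\perp$ to be the unique submodule of $M$ of the appropriate $k$-dimension.
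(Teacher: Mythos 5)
Your proposal is correct and follows essentially the same strategy as the paper: both arguments classify the self-dual modules via the congruence of Lemma~\ref{lem:sdB_1general} and then transfer the type by equipping a self-dual module with a non-degenerate invariant form and passing to the perp quotient of a totally isotropic submodule. The only difference is organizational — the paper obtains each family member in one step as the subquotient $[0^*,me-i]/[0^*,i]$ of the self-dual hook (the radical of a PIM), whereas you unroll the same computation into a one-level-at-a-time descent peeling off the socle — and your more explicit counting in Step 1 is a sound expansion of what the paper asserts briefly.
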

\begin{proof}
We note that\/ $0^*\equiv-1$ and\/ $h^*\equiv h-1\pmod{e}$. There are no self-dual projective indecomposable modules in this case, because such a module would be the projective cover of a self-dual irreducible module. 

The projective cover of\/ $E_{0^*}$ has Jacobson radical $[0^*,me]$. This module is self-dual as $0\equiv me\equiv 2\times0^*+2\pmod{e}$\,. Let\/ $B_0$ be a non-degenerate $G$-invariant bilinear form on $[0^*,me]$. The submodule $[0^*,me-i]$ of\/ $[0^*,me]$ has $B_0$-dual $[0^*,i]$. So $B_0$ induces a non-degenerate $G$-invariant bilinear form (of the same type as $B_0$) on the factor module $[0^*,me-i]/[0^*,i]=[i^*,me-2i]$. When $i=mh-1$, we get the shortest module $[mh,2]$ in the family. These self-dual subquotients of\/ $[0^*,me]$ give all modules in one of the families.

Similarly the Jacobson radical $[h^*,me]$ of the projective cover of\/ $E_h^*$ is self-dual. Its subquotients give the modules $[(h+i)^*,me-2i]$ in the other family. When $i=mh-1$, we get the shortest module $[(m+1)h,2]$ in the family.

This accounts for all non-projective self-dual indecomposable modules in $\mathbf{B}_1$. The last statement is clear as the level of a module in $\Gamma_s(\mathbf{B}_1)$ is given by its composition length.
\end{proof}

\begin{Example}
Consider the group $C_{15}:C_8$, where a generator of\/ $C_8$ squares a generator of\/ $C_{15}$. This group has a real $5$-block\/ $\mathbf{B}$ of defect\/ $1$ which has no real irreducible characters. The Brauer tree is a star with $e=2=m$.
If\/ $S$ is an irreducible $\mathbf{B}$-module, then the irreducible $\mathbf{B}$-modules are $S$ and\/ $S^*$. There is a self-dual indecomposable $\mathbf{B}$-module with composition series $S/S^*$ and another with composition series $S^*/S$. One of these is of\/ $+$ type and the other is of\/ $-$ type.
$$
   \begin{tikzpicture}[solidnode/.style={circle, fill=black!92, inner sep=0pt, minimum size=2.5mm},hollownode/.style={circle, fill=white!84, inner sep=0pt, minimum size=2.5mm}, auto,bend left]
        \draw (0,0) node[solidnode,draw]{} -- node[right] {\tiny $S$} (0,1) node[hollownode,draw]{} ;
        \draw (0,0) node[solidnode,draw]{} -- node[right] {\tiny $S^\ast$} (0,-1) node[hollownode,draw]{} ;
    \end{tikzpicture}  
$$ 
\end{Example}

\begin{Example}
Consider the group $C_5:C_4$, where a generator of the $C_4$ inverts $C_5$. This group has a real $5$-block\/ $\mathbf{B}$ of defect\/ $1$ which has a pair of real exceptional characters but no real non-exceptional characters. The Brauer tree is a star with $e=2=m$. Let\/ $S$ be an irreducible $\mathbf{B}$-module. So the other irreducible $\mathbf{B}$-module is $S^*$. The block has $4$ self-dual indecomposable modules, all of\/ $-$ type. It should be noted that both exceptional characters have Frobenius-Schur indicator $-1$.
$$
   \begin{tikzpicture}[solidnode/.style={circle, fill=black!92, inner sep=0pt, minimum size=2.5mm},hollownode/.style={circle, fill=white!84, inner sep=0pt, minimum size=2.5mm}, auto,bend left]
        \draw (0,0) node[solidnode,draw]{} -- node[right] {\tiny $S$} (0,1) node[hollownode,draw]{} ;
        \draw (0,0) node[solidnode,draw]{} -- node[right] {\tiny $S^\ast$} (0,-1) node[hollownode,draw]{} ;
    \end{tikzpicture}  
$$ 
\end{Example}

\subsection{Case 2: $\mathbf{B}_1$ has two self-dual irreducible modules}

Then $e$ is even and again we set\/ $h:=e/2$. We may assume that\/ $E_0$ and\/ $E_h$ are the self-dual irreducible $\mathbf{B}_1$-modules. So the real stem of\/  $\sigma(\mathbf{B}_1)$ consists of the edges $E_0$ and\/ $E_h$ and their endpoints. Moreover $E_i^*=E_{-i}$, for $i=0,\dots,e-1$.

$$
   \begin{tikzpicture}[solidnode/.style={circle, fill=black!92, inner sep=0pt, minimum size=2.5mm},hollownode/.style={circle, fill=white!84, inner sep=0pt, minimum size=2.5mm}, auto,bend left]
        \draw (4,2)  node[hollownode,draw]{} -- node[above right] {\tiny $E_0$} (2,2) node[solidnode,draw]{};
        \draw (3.73,3)  node[hollownode,draw]{} --  (2,2) node[solidnode,draw]{};
        \draw (2,2)   node[solidnode,draw]{} --  (3.73,1) node[hollownode,draw]{} ;
        \draw (2,2) node[solidnode,draw]{} -- node[above right] {} (3,3.73) node[hollownode,draw]{} ;
        \draw (2,2) node[solidnode,draw]{} -- node[right] {} (3,0.27) node[hollownode,draw]{} ;
        \draw (2,2) node[solidnode,draw]{} -- (0,2) node[hollownode,draw]{} ;
        \draw (2,2) node[solidnode,draw]{} -- (0.27,3) node[hollownode,draw]{} ;
        \draw (2,2) node[solidnode,draw]{} -- (0.27,1) node[hollownode,draw]{} ;
        \node at (3,2.9) {\tiny $E^{}_1$};
        \node at (3.3,1.5) {\tiny $E^{}_{{e-1}}$};
        \node at (2.3,3.2) {\tiny $E^{}_2$};
        \node at (3.1,0.8) {\tiny $E_{e-2}$};
        \node at (1.2,2.8) {\tiny $E^{}_{h-1}$};
        \node at (1.4,1.2) {\tiny $E^{}_{h+1}$};
        \node at (0.8,2.2) {\tiny $E^{}_{h}$};
        \draw[dashed] (2.5,0.45) to (0.9,0.8);
        \draw[dashed] (0.7,3.1) to (2.5,3.6);
    \end{tikzpicture}  
$$ 

\begin{Proposition}\label{prop:sd(B_1)=2}
The non-projective self-dual indecomposable $\mathbf{B}_1$ modules are divided into two families:
\begin{enumerate}[\rm(1)]
\item
The modules $[i,2i+1]$, for $i=0,1,\dots,mh-1$:
$$
\begin{tikzcd}[column sep=tiny]
\boxed{\substack{E_0}} &&
\boxed{\substack{E_1^*\\E_0\\E_1}} &&
\boxed{\substack{E_2^*\\E_1^*\\E_0\\E_1\\E_2}} &&
\ldots
\end{tikzcd}
$$
These modules have the same type as that of\/ $E_0$ and of the projective cover of\/ $E_{mh}$.

\item
The modules $[h+i,2i+1]$, for $i=0,1,\dots,mh-1$:
$$
\begin{tikzcd}[column sep=tiny]
\boxed{\substack{E_h}} &&
\boxed{\substack{E_{h+1}^*\\E_h\\E_{h+1}}} &&
\boxed{\substack{E_{h+2}^*\\E_{h+1}^*\\E_h\\E_{h+1}\\E_{h+2}}} &&
\ldots
\end{tikzcd}
$$
These modules have the same type as that of\/ $E_h$ and of the projective cover of\/ $E_{(m+1)h}$.
\end{enumerate}
There is one self-dual module from each family in each odd numbered level of\/ $\Gamma_s(\mathbf{B}_1)$, and none in each even numbered level.
\begin{proof}
In this case $\mathbf{B}_1$ contains two non-isomorphic self-dual PIMs, namely the projective covers of\/ $E_0$ and\/ $E_h$. The self-dual subquotients of the projective cover of\/ $E_{mh}$ give all modules in the first family and the self-dual subquotients of the projective cover of\/ $E_{(m+1)h}$ give all modules in the second family. This determines the type of the modules in each family, as in the proof of Proposition \ref{prop:sd(B_1)=0}. We omit the details.

This accounts for all non-projective self-dual indecomposable modules in $\mathbf{B}_1$. The last statement is clear as the level of a module in $\Gamma_s(\mathbf{B}_1)$ is given by its composition length.
\end{proof}
\end{Proposition}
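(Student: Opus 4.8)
The plan is to argue as in the proof of Proposition~\ref{prop:sd(B_1)=0}, making explicit the steps there deferred to ``we omit the details''. First I would specialise Lemma~\ref{lem:sdB_1general}: since $E_i^\ast=E_{-i}$ in the present case, $i^\ast=-i$, so $[i,\ell]$ is self-dual if and only if $\ell\equiv 2i+1\pmod e$. Hence every non-projective self-dual indecomposable module has odd composition length $\ell$, and, writing $e=2h$ and $\ell=2s+1$, the congruence $2i\equiv 2s\pmod{2h}$ has exactly the two solutions $i\equiv s$ and $i\equiv s+h\pmod e$. These are precisely the first indices occurring at composition length $\ell$ in families~(1) and~(2), namely $[s,2s+1]$ and $[h+s,2s+1]$. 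This simultaneously shows that the two families are disjoint, that together they enumerate every non-projective self-dual indecomposable $\mathbf B_1$-module exactly once -- a total of $2mh=me=|D|-1$, in agreement with Lemma~\ref{lem:sdB_1general} and Corollary~\ref{cor:numberselfduals} -- and, because the level of an indecomposable $\mathbf B_1$-module is its composition length, that each odd-numbered level of $\Gamma_s(\mathbf B_1)$ contains exactly one module of each family and each even-numbered level contains none, consistently with Lemma~\ref{lem:SD_levels_ARquiver}.

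For the statement about types I would proceed via self-dual projective covers, exactly as in Proposition~\ref{prop:sd(B_1)=0}. Since $e=2h$, one of $mh$ and $(m+1)h$ is $\equiv 0$ and the other is $\equiv h$ modulo $e$, so $\{E_{mh},E_{(m+1)h}\}=\{E_0,E_h\}$, and in particular the PIMs $P(E_{mh})=[mh,me+1]$ and $P(E_{(m+1)h})=[(m+1)h,me+1]$ -- uniserial, since $\sigma(\mathbf B_1)$ is a star -- are self-dual. Fix a non-degenerate $G$-invariant bilinear form $B$ on $P(E_{mh})$; its type is the type of $P(E_{mh})$. For $1\le i\le mh$ the submodule $W:=[mh,me+1-i]$ has $B$-perpendicular $W^\perp=[mh,i]$, and $W^\perp\subseteq W$ because $2i\le me+1$; hence $B$ induces a non-degenerate $G$-invariant bilinear form of the same type on the uniserial (hence indecomposable) quotient $W/W^\perp=[mh-i,\,me+1-2i]$. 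Putting $j:=mh-i$ and using $me=2mh$, these quotients are exactly the modules $[j,2j+1]$, $j=0,\dots,mh-1$, of family~(1), and $i=mh$ yields $[0,1]=E_0$ among them. So every module of family~(1), along with $E_0$ and $P(E_{mh})$, has one common type; the same argument applied to $P(E_{(m+1)h})$ handles family~(2) and produces the common type of $E_h$. The final assertion of the proposition is then immediate from the length/level bookkeeping above.

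I do not expect a real obstacle. The only external ingredient is the standard fact (valid since $p\ne2$; cf.\ \cite[Proposition~2.1]{W}) that a self-dual indecomposable module has a well-defined type and that this type is inherited by $W/W^\perp$ whenever $W^\perp\subseteq W$; all the needed structure -- that $\mathbf B_1$ is Nakayama, that its PIMs are the uniserial modules $[j,me+1]$, and that the submodules of a uniserial module are totally ordered -- is already in hand. The one point demanding care is the indexing in the second paragraph: matching the ``middle'' subquotients of $P(E_{mh})$ with the set $\{[j,2j+1]:0\le j\le mh-1\}$, and pairing $E_{mh},E_{(m+1)h}$ with $E_0,E_h$ according to the parity of $m$. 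This is routine but sign-sensitive, so I would check it against the case $e=2$ before writing the final version.
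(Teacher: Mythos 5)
Your proposal is correct and follows essentially the same route as the paper: the paper's proof identifies the self-dual subquotients of the self-dual PIMs $P(E_{mh})$ and $P(E_{(m+1)h})$ and transfers the type via the induced form on $W/W^{\perp}$, explicitly deferring the details to the analogous computation in Proposition~\ref{prop:sd(B_1)=0}, which is exactly what you have written out. Your additional use of Lemma~\ref{lem:sdB_1general} to pin down the two socle indices $s$ and $s+h$ at each odd length is a clean way of making the counting and the level statement explicit, and the indexing $j=mh-i$, $\{E_{mh},E_{(m+1)h}\}=\{E_0,E_h\}$ checks out.
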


Note that the type of\/ $E_0$ coincides with the Frobenius-Schur indicator of the real non-exceptional character which lifts the Brauer character of\/ $E_0$. Likewise the type of\/ $E_h$ coincides with the Frobenius-Schur indicator of the other real non-exceptional character.

\begin{Corollary}\label{cor:all_same_type}%
All self-dual indecomposable\/ $\mathbf{B}_1$-modules have the same type, if\/ $m$ is odd.
\begin{proof}
The hypothesis implies that\/ $mh\equiv h\pmod{e}$. So all modules in the first family, including $E_0$, have the same type as the projective cover of\/ $E_h$. On the other hand, \cite[Proposition 2.2]{W} shows that\/ $E_h$ and its projective cover have the same type. So all self-dual indecomposable $\mathbf{B}_1$-modules have the same type as $E_0$.
\end{proof}
\end{Corollary}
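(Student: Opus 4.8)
The plan is to exploit the type bookkeeping already carried out in Proposition~\ref{prop:sd(B_1)=2}. That proposition partitions the non-projective self-dual indecomposable $\mathbf{B}_1$-modules into two families: every module in the first family has the same type as $E_0$ and as the projective cover $P(E_{mh})$, while every module in the second family has the same type as $E_h$ and as $P(E_{(m+1)h})$. Since $E_0$ belongs to the first family, it therefore suffices to show that these two types coincide when $m$ is odd.

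The first step is to reduce the indices $mh$ and $(m+1)h$ modulo $e$. Writing $m=2k+1$, we obtain $mh = 2kh + h = ke + h \equiv h \pmod e$, and similarly $(m+1)h = 2(k+1)h = (k+1)e \equiv 0 \pmod e$. Hence $P(E_{mh}) = P(E_h)$ and $P(E_{(m+1)h}) = P(E_0)$. In particular the modules of the first family all share the type of $P(E_h)$, and the module $E_h$ itself lies in the second family.

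The second step is to invoke \cite[Proposition 2.2]{W}, which says that a self-dual module and its projective cover have the same type; applied to $E_h$ this gives that $E_h$ and $P(E_h)$ have the same type. Chaining the identifications now closes the argument: the type of an arbitrary module in the first family equals the type of $P(E_h)$, which equals the type of $E_h$, which is the type of every module in the second family. Since $E_0$ lies in the first family, all non-projective self-dual indecomposable $\mathbf{B}_1$-modules have the common type of $E_0$, and the self-dual PIMs $P(E_0)$ and $P(E_h)$ inherit this type as well by another application of \cite[Proposition 2.2]{W}.

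I do not anticipate a genuine obstacle: the proof is essentially a diagram chase through the type assignments of Proposition~\ref{prop:sd(B_1)=2}. The only points requiring care are the residue computations (so that $mh$ and $(m+1)h$ really land on the correct PIMs $P(E_h)$ and $P(E_0)$) and the exact formulation of \cite[Proposition 2.2]{W}, which is what permits transferring type between a self-dual module and its projective cover.
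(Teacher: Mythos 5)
Your proposal is correct and follows essentially the same route as the paper: reduce $mh\equiv h\pmod e$ using that $m$ is odd and $e=2h$, identify the common type of the first family with that of $P(E_h)$, and apply \cite[Proposition 2.2]{W} to equate the types of $E_h$ and $P(E_h)$, thereby linking the two families of Proposition~\ref{prop:sd(B_1)=2}. The extra computation $(m+1)h\equiv 0\pmod e$ and the remark about the self-dual PIMs are harmless additions that the paper leaves implicit.
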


\begin{Example}
Consider the group $C_{15}:C_4$, where a generator of the $C_4$ squares a generator of\/ $C_{15}$. This group has a real $5$-block\/ $\mathbf{B}$ of defect\/ $1$ which has two exceptional characters, one with Frobenius-Schur indicator $+1$ and the other with Frobenius-Schur indicator $-1$. The two exceptional characters form a complex conjugate pair. The Brauer tree is a star with $e=2=m$. Then $\mathbf{B}$ has two self-dual irreducible modules $S$ and\/ $T$, where $S$ has $+$ type and\/ $T$ has $-$ type.
\[
   \begin{tikzpicture}[solidnode/.style={circle, fill=black!92, inner sep=0pt, minimum size=2.5mm},hollownode/.style={circle, fill=white!84, inner sep=0pt, minimum size=2.5mm}, auto,bend left]
        \draw (0,0) node[solidnode,draw]{} -- node {\tiny $T$} (1.5,0) node[hollownode,draw]{} ;
        \draw (-1.5,0) node[hollownode,draw]{}  -- node {\tiny $S$} (0,0) node[solidnode,draw]{} ;
    \end{tikzpicture}  
\]

\end{Example}

\subsection{Case 3: $\mathbf{B}_1$ has one self-dual irreducible module}

Then $e$ is odd and we may assume that\/ $E_0$ is the unique self-dual irreducible $\mathbf{B}_1$-module. Also $m$ is even, as $me=|D|-1$ is even. The real stem of\/ $\sigma(\mathbf{B}_1)$ consists of\/ $E_0$ and its endpoints, and\/ $E_i^*=E_{-i}$, for $i=0,\dots,e-1$. We set\/ $h:=(e-1)/2$. 

  \[
   \begin{tikzpicture}[solidnode/.style={circle, fill=black!92, inner sep=0pt, minimum size=2.5mm},hollownode/.style={circle, fill=white!84, inner sep=0pt, minimum size=2.5mm}, auto,bend left]
        \draw (4,2)  node[hollownode,draw]{} -- node[above right] {\tiny $E_0$} (2,2) node[solidnode,draw]{};
        \draw (3.73,3)  node[hollownode,draw]{} --  (2,2) node[solidnode,draw]{};
        \draw (2,2)   node[solidnode,draw]{} --  (3.73,1) node[hollownode,draw]{} ;
        \draw (2,2) node[solidnode,draw]{} -- node[above right] {} (3,3.73) node[hollownode,draw]{} ;
        \draw (2,2) node[solidnode,draw]{} -- node[right] {} (3,0.27) node[hollownode,draw]{} ;
        \draw (2,2) node[solidnode,draw]{} -- (0.27,3) node[hollownode,draw]{} ;
        \draw (2,2) node[solidnode,draw]{} -- (0.27,1) node[hollownode,draw]{};
        \node at (3,2.9) {\tiny $E^{}_1$};
        \node at (3.3,1.5) {\tiny $E^{}_{e-1}$};
        \node at (2.3,3.2) {\tiny $E^{}_2$};
        \node at (3,0.9) {\tiny $E^{}_{e-2}$};
        \node at (0.5,2.44) {\tiny $E_h$};
        \node at (0.5,1.6) {\tiny $E^{}_{h+1}$};
        \draw[dashed] (2.5,0.45) to (0.9,0.8);
        \draw[dashed] (0.7,3.1) to (2.5,3.6);
    \end{tikzpicture}  
    \]

\begin{Proposition}\label{prop:sd(B_1)=1}%
The non-projective self-dual indecomposable $\mathbf{B}_1$-modules are divided into two families:
\begin{enumerate}[\rm(1)]
\item
The modules $[i,2i+1]$, for $i=0,1,\dots,me/2-1$:
$$
\begin{tikzcd}[column sep=tiny]
\boxed{\substack{E_0}} &&
\boxed{\substack{E_1^*\\E_0\\E_1}} &&
\boxed{\substack{E_2^*\\E_1^*\\E_0\\E_1\\E_2}} &&
%\boxed{\substack{E_{-4}\\E_{-3}\\E_{-2}\\E_{-1}\\E_0\\E_1\\E_2\\E_3}} &&
\ldots
\end{tikzcd}
$$
These modules have the same type as the projective cover of\/ $E_0$.
\item
The modules $[h+i,2i+2]$, for $i=0,\dots,me/2-1$:
$$
\begin{tikzcd}[column sep=tiny]
\boxed{\substack{E_h^*\\E_h}} &&
\boxed{\substack{E_{h+1}^*\\E_h^*\\E_h\\E_{h+1}}} &&
\boxed{\substack{E_{h+2}^*\\E_{h+1}^*\\E_{h}^*\\E_{h}\\E_{h+1}\\E_{h+2}}} &&
\ldots
\end{tikzcd}
$$
These modules have the same type as the hook module $[h^*,me]\,\cong \Omega^e(E_0)$.
\end{enumerate}
There is one module in the first (respectively second) family on each odd (respectively even) numbered level of\/ $\Gamma_s(\mathbf{B}_1)$.
\begin{proof}
There is exactly one self-dual PIM in $\mathbf{B}_1$, namely the projective cover of\/ $E_0$. The self-dual proper subquotients of\/ $P(E_0)$ have the same type and give all modules in the first family. The smallest of these modules is $E_0$.

Similarly the Jacobson radical $[h^*,me]$ of the projective cover of\/ $E_h^*$ is self-dual, as its head is $E_h$ and its socle is $E_h^*$. The self-dual subquotients of\/ $[h^*,me]$ have the same type and give all modules in the second family. The smallest of these modules is $[h,2]$.

The isomorphism $[h^*,me]\,\cong \Omega^e(E_0)$ follows from Corollary \ref{cor:SD_hooks_in_ARquiver}.

This accounts for all non-projective self-dual indecomposable modules in $\mathbf{B}_1$. The last statement is clear as the level of a module in $\Gamma_s(\mathbf{B}_1)$ is given by its composition length.
\end{proof}
\end{Proposition}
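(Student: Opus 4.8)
The plan is to follow the pattern of the proofs of Propositions~\ref{prop:sd(B_1)=0} and~\ref{prop:sd(B_1)=2}: first classify the non-projective self-dual indecomposable $\mathbf{B}_1$-modules by composition length via Lemma~\ref{lem:sdB_1general}, and then realise the two families as the self-dual subquotients of two distinguished self-dual modules of maximal composition length — here the projective cover $P(E_0)$ and the hook $\Omega^{e}(E_0)$ — so that each family inherits a single type from the module in which it sits.

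First I would set up the combinatorics. In this case $e$ is odd, $m$ is even (because $me=|D|-1$ is even), $E_0$ is the unique self-dual simple module and $i^{*}\equiv-i\pmod e$, so Lemma~\ref{lem:sdB_1general} says that $[i,\ell]$ is self-dual if and only if $\ell\equiv 2i+1\pmod e$. Since $\gcd(2,e)=1$, for every composition length $\ell\in\{1,\dots,em\}$ there is exactly one residue $i$ with $2i\equiv\ell-1\pmod e$, hence exactly one non-projective self-dual indecomposable $\mathbf{B}_1$-module of each composition length; as the level of an indecomposable module in $\Gamma_s(\mathbf{B}_1)$ equals its composition length, this already proves the last sentence of the proposition. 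It also shows that family~(1) consists of the self-dual modules of odd composition length $1,3,\dots,em-1$ and family~(2) of those of even composition length $2,4,\dots,em$; matching these with the explicit lists $[i,2i+1]$ and $[h+i,2i+2]$ of the statement is a direct check of the congruence condition together with the fact that $[i,\ell]^{*}\cong[(i-\ell+1)^{*},\ell]$ (from the proof of Lemma~\ref{lem:sdB_1general}).

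For the types I would argue as follows. Since $P(E_i)^{*}\cong P(E_{i^{*}})=P(E_{-i})$ and $e$ is odd, $P(E_0)$ is the unique self-dual PIM of $\mathbf{B}_1$. Fix a non-degenerate $G$-invariant bilinear form $B$ on $P(E_0)$; because $\mathbf{B}_1$ is symmetric one has $\soc^{k}(P(E_0))^{\perp}=\rad^{k}(P(E_0))$, and for $k\ge em/2+1$ this orthogonal submodule is contained in $\soc^{k}(P(E_0))$. Hence $B$ induces a non-degenerate $G$-invariant form \emph{of the same type as $B$} on the subquotient $W_{k}:=\soc^{k}(P(E_0))/\rad^{k}(P(E_0))$, which has odd composition length $2k-em-1$. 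As $k$ ranges over $em/2+1,\dots,em$, the modules $W_k$ realise each odd composition length $1,3,\dots,em-1$ exactly once, so by the uniqueness just noted they are precisely the modules of family~(1), all of the type of $P(E_0)$; since $m$ is even, the middle Loewy constituent of $P(E_0)$ is $E_0$, so the shortest of them is $W_{em/2+1}\cong E_0$. For family~(2) I would rerun this argument with $\Omega^{e}(E_0)$ in place of $P(E_0)$. By Corollary~\ref{cor:SD_hooks_in_ARquiver} this module is the self-dual hook lying on the end $\Omega^{2}$-orbit of level $em$; in particular it has composition length $em$ and, being $\Omega$ of a simple module on level $1$, it is the Jacobson radical of a projective indecomposable module (concretely, the uniserial Heller formula $\Omega([i,\ell])\cong[\,i-\ell+1,\,em+1-\ell\,]$ gives $\Omega^{2j}(E_0)\cong E_j$, whence $\Omega^{e}(E_0)\cong\Omega(E_h)$, the module written $[h^{*},me]$ in the statement). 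Fixing a non-degenerate $G$-invariant form on $\Omega^{e}(E_0)$ and taking its successive middle subquotients produces one module of each even composition length $2,4,\dots,em$, all of the type of $\Omega^{e}(E_0)$, and these are exactly the modules of family~(2).

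Finally, families~(1) and~(2) together account for $em/2+em/2=em=|D|-1$ non-projective self-dual indecomposable $\mathbf{B}_1$-modules, which by Lemma~\ref{lem:sdB_1general} is all of them, and the level statement has been observed. The step I expect to be the main obstacle is the clean identification of the length-$em$ member of family~(2) as the self-dual hook $\Omega^{e}(E_0)$ (and of its place in $\Gamma_s(\mathbf{B}_1)$), together with the bookkeeping that its middle subquotients realise exactly the even composition lengths and match the list in~(2). The structural input that a self-dual subquotient of a self-dual $kG$-module inherits a non-degenerate $G$-invariant form of the same type — applied through \cite[Proposition~2.2]{W}, as in the proof of Proposition~\ref{prop:sd(B_1)=0} — and the remaining index computations are routine.
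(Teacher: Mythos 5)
Your proposal is correct and takes essentially the same route as the paper: both arguments realise family (1) as the self-dual subquotients of the unique self-dual PIM $P(E_0)$ and family (2) as the self-dual subquotients of the self-dual hook $\Omega^e(E_0)$ of composition length $em$, so that each family inherits a single type, and then count against Lemma~\ref{lem:sdB_1general}; your explicit remark that invertibility of $2$ modulo the odd number $e$ forces exactly one self-dual module per composition length just makes the paper's implicit ``this accounts for all'' step transparent. One caveat: the ``direct check'' you defer for family (2) would not confirm the printed labels — the congruence $\ell\equiv 2j+1\pmod e$ selects $[h^*+i,\,2i+2]$ rather than $[h+i,\,2i+2]$, and accordingly the length-$em$ member is $[h,me]=\rad P(E_h)=\Omega(E_h)$ rather than $[h^*,me]=\rad P(E_h^*)$ (compare the $e=3$, $m=2$ example, where family (2) is $[1^*,2],[0,4],[1,6]$) — an off-by-one in the statement's indexing to which your argument, being organised by composition length rather than by the printed labels, is otherwise immune.
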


\begin{Example}
The cyclic group $C_3$ has a single $3$-block of defect\/ $1$. The trivial character is non-exceptional and the two exceptional characters form a complex conjugate pair. The unique irreducible module (the trivial module) has $+$ type, and the unique 2-dimensional indecomposable module has $-$ type.
\[
   \begin{tikzpicture}[solidnode/.style={circle, fill=black!92, inner sep=0pt, minimum size=2.5mm},hollownode/.style={circle, fill=white!84, inner sep=0pt, minimum size=2.5mm}, auto,bend left]
        \draw (-1.5,0) node[hollownode,draw]{} -- node {\tiny $k$} (0,0) node[solidnode,draw]{} ;
    \end{tikzpicture}  
\]

\end{Example}

\begin{Example}
Let\/ $p=7$ and let\/ $\mathbf{B}$ be a self-dual $7$-block with a cyclic defect group $D\cong C_7$. Moreover, assume $e=3$, and thus $m=2$. The Brauer tree of the Brauer correspondent\/ $\mathbf{B}_1$ of\/ $\mathbf{B}$ in $N_G(D_1)$ is a star with three edges and exceptional vertex at its centre and there is precisely one self-dual simple $\mathbf{B}_1$-module, say $E_0$: 
  \[
   \begin{tikzpicture}[solidnode/.style={circle, fill=black!92, inner sep=0pt, minimum size=2.5mm},hollownode/.style={circle, fill=white!84, inner sep=0pt, minimum size=2.5mm}, auto,bend left]
        \draw (3,0)  node[solidnode,draw]{} -- node[above] {\tiny $E_0$} (4.3,0) node[hollownode,draw]{};
         \draw (2,1)  node[hollownode,draw]{} -- node[above] {\tiny $\,\,E_1$} (3,0) node[solidnode,draw]{};
         \draw (2,-1)  node[hollownode,draw]{} -- node[below] {\tiny $\,\,E_1^\ast$} (3,0) node[solidnode,draw]{};
    \end{tikzpicture} 
    \]
We write $E_1$ and  $E_1^\ast$ for the remaining pair of dual  irreducible modules, that is  $E_1^\ast=E_2$.  
The $|D|-1=6$ self-dual non-projective $\mathbf{B}_1$-modules can be described as follows:
\begin{enumerate}[\rm 1.]
    \item In family {\rm(1)}, we have 
    \[ 
    [1^*,5]={\boxed{\substack{E_1\\E_1^\ast\\E_0\\E_1\\E_1^\ast}}}\,\,,
    \quad
    [1,3]={\boxed{\substack{E_1^\ast\\E_0\\E_1}}}\,\,\text{ and }
    \quad
    [0,1]=E_0\,.
    \]
    \item
    In family {\rm(2)}, we have
    \[
    [1,6]={\boxed{\substack{E_1^\ast\\E_0\\E_1\\E_1^\ast\\E_0\\E_1}}}\,\,,
    \quad
    [0,4]={\boxed{\substack{E_0\\E_1\\E_1^\ast\\E_0}}}\,\,\text{ and }
    \quad
    [1^*,2]={\boxed{\substack{E_1\\E_1^\ast}}}\,.
    \]
\end{enumerate}
The stable Auslander-Reiten quiver $\Gamma_s(\mathbf{B}_1)$ is as follows. (Formally, we should draw arrows from from the third diagonal to the first diagonal, which close the tube of shape $(\mathbb{Z}/3\mathbb{Z})A_{6}$. We omit them, however, for clarity, as these are easy to work out.)

\tikzcdset{row sep/normal=0.9cm}
\[
\begin{tikzcd}[column sep=tiny]
	&&&&& {\boxed{\substack{E_1\\E_1^\ast\\E_0\\E_1\\E_1^\ast\\E_0}}} && {\boxed{\substack{E_0\\E_1\\E_1^\ast\\E_0\\E_1\\E_1^\ast}}} && {\color{red}{\boxed{\substack{E_1^\ast\\E_0\\E_1\\E_1^\ast\\E_0\\E_1}}}} & {} & {\text{length} =6} \\
	&&&& {\boxed{\substack{E_1^\ast\\E_0\\E_1\\E_1^\ast\\E_0}}} && {\color{blue}{\boxed{\substack{E_1\\E_1^\ast\\E_0\\E_1\\E_1^\ast}}}} && {\boxed{\substack{E_0\\E_1\\E_1^\ast\\E_0\\E_1}}} & {} && {\text{length} =5} \\
	&&& {\color{red}{\boxed{\substack{E_0\\E_1\\E_1^\ast\\E_0}}}} && {\boxed{\substack{E_1^\ast\\E_0\\E_1\\E_1^\ast}}} && {\boxed{\substack{E_1\\E_1^\ast\\E_0\\E_1}}} & {} & {} && {\text{length} =4} \\
	&& {\boxed{\substack{E_1\\E_1^\ast\\E_0}}} && {\boxed{\substack{E_0\\E_1\\E_1^\ast}}} && {\color{blue}{\boxed{\substack{E_1^\ast\\E_0\\E_1}}}} & {} &&&& {\text{length} =3} \\
	& {\boxed{\substack{E_1^\ast\\E_0}}} && {\color{red}{\boxed{\substack{E_1\\E_1^\ast}}}} && {\boxed{\substack{E_0\\E_1}}} & {} &&&&& {\text{length} =2} \\{\color{blue}
	{E_0}} && {E_1^\ast} && {E_1} & {} &&&&&& {\text{length} =1}
	\arrow[from=6-1, to=5-2]
	\arrow[from=5-2, to=4-3]
	\arrow[from=4-3, to=3-4]
	\arrow[from=3-4, to=2-5]
	\arrow[from=2-5, to=1-6]
	\arrow[from=6-3, to=5-4]
	\arrow[from=5-4, to=4-5]
	\arrow[from=4-5, to=3-6]
	\arrow[from=3-6, to=2-7]
	\arrow[from=2-7, to=1-8]
	\arrow[from=6-5, to=5-6]
	\arrow[from=5-6, to=4-7]
	\arrow[from=4-7, to=3-8]
	\arrow[from=3-8, to=2-9]
	\arrow[from=2-9, to=1-10]
	\arrow[from=5-2, to=6-3]
	\arrow[from=4-3, to=5-4]
	\arrow[from=3-4, to=4-5]
	\arrow[from=2-5, to=3-6]
	\arrow[from=5-4, to=6-5]
	\arrow[from=4-5, to=5-6]
	\arrow[from=3-6, to=4-7]
	\arrow[from=2-7, to=3-8]
	\arrow[from=1-6, to=2-7]
	\arrow[from=1-8, to=2-9]
	\arrow[dashed, no head, from=1-8, to=1-6]
	\arrow[dashed, no head, from=1-10, to=1-8]
	\arrow[dashed, no head, from=2-9, to=2-7]
	\arrow[dashed, no head, from=2-7, to=2-5]
	\arrow[dashed, no head, from=3-8, to=3-6]
	\arrow[dashed, no head, from=3-6, to=3-4]
	\arrow[dashed, no head, from=4-7, to=4-5]
	\arrow[dashed, no head, from=5-6, to=5-4]
	\arrow[dashed, no head, from=4-5, to=4-3]
	\arrow[dashed, no head, from=5-4, to=5-2]
	\arrow[dotted, no head, from=1-11, to=1-12
]
	\arrow[dotted, no head, from=2-10, to=2-12]
	\arrow[dotted, no head, from=3-9, to=3-12]
	\arrow[dotted, no head, from=4-8, to=4-12]
	\arrow[dotted, no head, from=5-7, to=5-12]
	\arrow[dotted, no head, from=6-6, to=6-12]
\end{tikzcd}
\]
 
\end{Example}

\subsection{Case when the defect group is normal}

In this section we assume that a defect group $D$ of\/ $\mathbf{B}$ is normal in $G$. We can then use an inner-product calculation to determine the types of the self-dual indecomposable $\mathbf{B}$-modules.

Recall that\/ $\mathbf{b}$ is a block of\/ $\op{C}_G(D)$ covered by $\mathbf{B}$. Set\/ $G_{\mathbf{b}}$ as its stabilizer in $G$. Then $e=[G_{\mathbf{b}}:\op{C}_G(D)]$ is the inertial index of\/ $\mathbf{B}$. Also the Brauer tree $\sigma(\mathbf{B})$ is a star with $e$ edges, indexing the irreducible $\mathbf{B}$-modules. The central vertex is exceptional, and it has multiplicity $m=\frac{|D|-1}{e}$.

As the quotient group $G_{\mathbf{b}}/\op{C}_G(D)$ is cyclic of\/ $p'$-order $e$, the unique irreducible $\mathbf{b}$-module has $e$ non-isomorphic extensions to $G_{\mathbf{b}}$. Moreover each extension induces irreducibly to $G$. These represent the $e$ isomorphism classes of irreducible $\mathbf{B}$-modules.

Let\/ $u\in D$ generate the unique order $p$ subgroup of\/ $D$. As $\langle u\rangle$ is normalized by $G$, for each $g\in G$ there is an integer $n(g)$, determined modulo $p$, such that\/ $gug^{-1}=u^{n(g)}$. It is clear that\/ $\lambda(g):=n(g)1_k$ defines a faithful linear $k$-character of\/ $G/\op{C}_G(D)$.

Let\/ $S$ be an irreducible $\mathbf{B}$-module and identify $\lambda$ with the corresponding $1$-dimensional $kG$-module. It follows from \cite[Section 5]{Gr} that\/ $S,\lambda S,\dots,\lambda^{e-1}S$ are representatives of the isomorphism classes of irreducible $\mathbf{B}$-modules. Moreover these index the edges of\/ $\sigma(\mathbf{B})$ in counter-clockwise order around\/ $0$. Thus $S\cong\lambda^e S$ and the $|D|$ composition factors of the (uniserial) projective cover of\/ $S$ are $S,\lambda S,\dots,\lambda^{me}S\cong S$, listed from top to bottom.

Let\/ $\chi$ and $\mu$ be irreducible characters of $G$ whose restrictions to the $p$-regular elements of\/ $G$ are the Brauer character of\/ $S$ and $\lambda$, respectively. Then $\chi,\mu\chi,\dots,\mu^{e-1}\chi$ are the non-exceptional irreducible characters in $\mathbf{B}$ and the restriction of\/ $\mu^i\chi$ to the $p$-regular elements of\/ $G$ is the Brauer character of\/ $\lambda^i S$, for $i=0,\dots,e-1$.

The statement of the following lemma is slightly more general than needed:

\begin{Lemma}\label{L:bilinear}
Let\/ $G$ be a finite group and let\/ $S$ be an irreducible $kG$-module, and let\/ $\lambda:G\rightarrow k^\times$ be a linear representation such that\/ $S^*\cong\lambda S$. Suppose that\/ $\mu$ and\/ $\chi$ are ordinary characters of\/ $G$ lifting the Brauer characters of\/ $\lambda$ and\/ $S$, respectively. Define
\begin{equation}\label{E:epsilon}
\epsilon_\mu(\chi):=\frac{1}{|G|}\sum_{g\in G}\mu(g)\chi(g^2).
\end{equation}
Then $\epsilon_\mu(\chi)=\pm1$, and there is a bilinear form $B:S\times S\rightarrow k$ such that
$$
\begin{aligned}
B(s_2,s_1)  &=&\epsilon_\mu(\chi)&B(s_1,s_2),\\
B(gs_1,gs_2)&=&\lambda(g)            &B(s_1,s_2),
\end{aligned}
\quad\mbox{for all $g\in G$ and\/ $s_1,s_2\in S$}.
$$
Moreover $B$ is uniquely determined by $S$ and\/ $\lambda$, up to a non-zero scalar.
\begin{proof}
Let\/ $\alpha:G\rightarrow\op{GL}_n(k)$ be a matrix representation for the $kG$-module $S$. Now the transpose inverse map $M\mapsto M^{-t}$, for $M\in\op{GL}_n(k)$, is an outer automorphism of $\op{GL}_n(k)$. Composing this with $\alpha$, we get\/ $\alpha^*:G\rightarrow\op{GL}_n(k)$, defined by $\alpha^*(g):=\alpha(g)^{-t}$, for all $g\in G$. This is a matrix representation corresponding to the dual $kG$-module $S^*$.

As $\lambda S\cong S^*$, there is $B\in\op{GL}_n(k)$, determined up to a non-zero scalar, such that
\begin{equation}\label{E:bilinear-form}
\alpha(g)^{-t}=\lambda(g)B\alpha(g)B^{-1},\quad\mbox{for all $g\in G$.}
\end{equation}
Applying the transpose inverse to both sides, we get
$$
\alpha(g)=\lambda(g)^{-1}\lambda(g)B^{-t}B\alpha(g)B^{-1}B^{t}=(B^{-t}B)\alpha(g)(B^{-t}B)^{-1}.
$$
As $\alpha$ is irreducible, Schur's Lemma implies that\/ $B^t=\zeta B$, for some $\zeta\in k$. Transposing both sides of this equation, we get\/ $B=\zeta^2 B$. So $\zeta=\pm1_k$.  Clearly $B$ is symmetric if\/ $\zeta=+1$, or alternating if\/ $\zeta=-1$.

Now \eqref{E:bilinear-form} can be rewritten as
$$
\alpha(g)^{-t}B\alpha(g)^{-1}=\lambda(g)B,\quad\mbox{for all $g\in G$.}
$$
So $kB$ is a $G$-invariant subspace of\/ $S^*\otimes_k S^*$ which is isomorphic to the 1-dimensional $kG$-module $\lambda$.

As $\op{char}(k)\ne2$, there is a $kG$-isomorphism
$$
S^*\otimes_k S^*\cong\op{Sym}(S)\oplus\op{Alt}(S),
$$
where $\op{Sym}(S)$ is the space of symmetric bilinear forms on $S$ and\/ $\op{Alt}(S)$ is the space of alternating bilinear forms on $S$. Now $\ov\chi^2=\ov\chi_s+\ov\chi_a$, where $\chi_s$ is the symmetric part of\/ $\chi^2$ and\/ $\chi_a$ is the alternating part of\/ $\chi^2$. Moreover, the restrictions of\/ $\ov\chi_s$ and\/ $\ov\chi_a$ to the $p$-regular elements of\/ $G$ give the Brauer characters of\/ $\op{Sym}(S)$ and\/ $\op{Alt}(S)$, respectively. Thus
$$
\zeta=\langle\mu,\ov\chi_s-\ov\chi_a\rangle=\frac{1}{|G|}\sum_{g\in G}\mu(g)\chi(g^2).
$$
\end{proof}
\end{Lemma}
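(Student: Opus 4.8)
The plan is to reduce everything to Schur's Lemma applied to a concrete matrix representation. First I would fix a matrix representation $\alpha\colon G\to\op{GL}_n(k)$ affording $S$; then $g\mapsto\alpha(g)^{-t}$ affords $S^*$, and the hypothesis $S^*\cong\lambda S$ supplies an invertible matrix $B$, unique up to a non-zero scalar by Schur's Lemma (since $S$, hence $\lambda S\cong S^*$, is irreducible), with $\alpha(g)^{-t}=\lambda(g)\,B\,\alpha(g)\,B^{-1}$ for all $g\in G$. This uniqueness of $B$ is exactly the asserted uniqueness of the associated bilinear form $B(s_1,s_2):=s_1^{t}Bs_2$, and rewriting the relation as $\alpha(g)^{-t}B\,\alpha(g)^{-1}=\lambda(g)B$ gives $B(gs_1,gs_2)=\lambda(g)B(s_1,s_2)$. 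To extract the symmetry sign, I would apply the transpose--inverse automorphism of $\op{GL}_n(k)$ to the defining relation and compare with the original one: the upshot is that $B^{-t}B$ centralises $\alpha(G)$, so Schur's Lemma forces $B^{t}=\zeta B$ for some scalar $\zeta$, and transposing once more forces $\zeta^{2}=1$, hence $\zeta=\pm1$ (here I use $\op{char}k\ne2$). Thus $B$ is symmetric or alternating, and the remaining content of the lemma is the identification $\zeta=\epsilon_\mu(\chi)$.

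For that identification I would pass to the tensor square. The form $B$ spans a one-dimensional $kG$-submodule of $S^*\otimes_k S^*$ isomorphic to $\lambda$, lying in $\op{Sym}(S)$ when $\zeta=+1$ and in $\op{Alt}(S)$ when $\zeta=-1$. Since $\op{char}k\ne2$ we have $S^*\otimes_k S^*=\op{Sym}(S)\oplus\op{Alt}(S)$, and $\op{Hom}_{kG}(\lambda,S^*\otimes_k S^*)\cong\op{Hom}_{kG}(\lambda S,S^*)$ is one-dimensional by Schur's Lemma, so $\lambda$ occurs in exactly one of the two summands; hence $\zeta$ is the difference between the multiplicity of $\lambda$ in $\op{Sym}(S)$ and that in $\op{Alt}(S)$. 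I would then compute these multiplicities with ordinary characters: writing $\chi^{2}=\chi_s+\chi_a$ for the decomposition into symmetric and alternating parts, one has the elementary identity $\chi_s(g)-\chi_a(g)=\chi(g^{2})$, and the symmetric and alternating squares of $S$ have Brauer characters obtained by reducing $\chi_s$ and $\chi_a$ modulo $p$ (because $\chi$ lifts the Brauer character of $S$ and forming symmetric/alternating squares is compatible with reduction). Feeding this into $\langle\mu,\chi_s-\chi_a\rangle$ then yields $\zeta=\frac{1}{|G|}\sum_{g\in G}\mu(g)\chi(g^{2})=\epsilon_\mu(\chi)$, which is therefore $\pm1$.

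The two Schur's-Lemma steps are routine; the main obstacle is making the final identification airtight, namely justifying that the characteristic-$0$ inner product $\langle\mu,\chi_s-\chi_a\rangle$ really computes the difference of the multiplicities of the \emph{modular} simple module $\lambda$ in $\op{Sym}(S)$ and $\op{Alt}(S)$. This rests on three things: that $\lambda$ occurs with total multiplicity one in the tensor square (supplied by the Schur computation above, so that exactly one of the two multiplicities is $1$ and the other $0$); that $\op{Sym}(S)$ and $\op{Alt}(S)$ indeed have Brauer characters equal to the reductions of $\chi_s$ and $\chi_a$; and that, for the linear (hence $p'$-degree, liftable) character $\mu$, the relevant multiplicity is correctly detected by $\langle\mu,\cdot\rangle$. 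Once this bookkeeping between the two characteristics is in place, the identity $\chi_s(g)-\chi_a(g)=\chi(g^{2})$ closes the argument.
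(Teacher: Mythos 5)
Your proposal is correct and follows essentially the same route as the paper's proof: the same Schur's Lemma construction of $B$ from the relation $\alpha(g)^{-t}=\lambda(g)B\alpha(g)B^{-1}$, the same transpose--inverse argument giving $B^t=\zeta B$ with $\zeta=\pm1$, and the same identification of $\zeta$ via the decomposition $S^*\otimes_k S^*\cong\op{Sym}(S)\oplus\op{Alt}(S)$ and the identity $\chi_s(g)-\chi_a(g)=\chi(g^2)$. Your added remark that $\op{Hom}_{kG}(\lambda,S^*\otimes_k S^*)$ is one-dimensional, so that $\lambda$ occurs in exactly one of the two summands, makes the final cross-characteristic bookkeeping slightly more explicit than the paper's version, but it is the same argument.
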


Now we return to our block\/ $B$ with a cyclic normal defect group $D$, inertial index $e$ and irreducible module $S$. Also $S^*\cong\lambda S$, where $\lambda$ is the linear $k$-representation determined by the action of\/ $G$ on the elements of order $p$ in $D$. Since $\lambda S$ is the counter-clockwise successor of\/ $S$ in $\sigma(\mathbf{B})$ there is a non-split short exact sequence of\/ $\mathbf{B}$-modules
$$
0\rightarrow \lambda S\rightarrow M\rightarrow S\rightarrow 0,
$$
where $M$ is uniserial with composition length $2$, socle $\lambda S$ and head\/ $S$. Clearly $M$ is self-dual, as $\lambda S\cong S^*$.

\begin{Theorem}
With the notation above, the self-dual indecomposable $\mathbf{B}$-module $M=\begin{tikzcd}[column sep=tiny]\boxed{\substack{S\\\lambda S}}\end{tikzcd}$ has orthogonal type if\/ $\epsilon_\mu(\chi)=-1$, or symplectic type if\/ $\epsilon_\mu(\chi)=+1$.
\end{Theorem}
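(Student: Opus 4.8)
The plan is to write down an explicit non‑degenerate $G$‑invariant bilinear form on $M$ adapted to the filtration $0\subset\lambda S\subset M$, determine its symmetry, and evaluate the resulting invariant by comparison with Lemma~\ref{L:bilinear}. Throughout, $B$ denotes the form on $S$ produced by that lemma, so $B(s_2,s_1)=\epsilon_\mu(\chi)B(s_1,s_2)$ and $B$ realises the $kG$‑isomorphism $\lambda S\xrightarrow{\sim}S^*$; and $\operatorname{Sym}(S)$, $\operatorname{Alt}(S)$ denote the $kG$‑modules of symmetric, resp. alternating, bilinear forms on $S$, so that $S^*\otimes S^*=\operatorname{Sym}(S)\oplus\operatorname{Alt}(S)$ since $p$ is odd. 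First I would show that the type of $M$ is detected cohomologically. Fix a non‑degenerate $G$‑invariant form $\Phi$ on $M$ (one exists as $M$ is self‑dual). The submodule $\operatorname{soc}(M)=\lambda S$ is the only submodule of its dimension, so $\operatorname{soc}(M)^{\perp}$ is a submodule of dimension $\dim M-\dim\lambda S$ and hence equals $\operatorname{soc}(M)$; thus $\lambda S$ is totally isotropic and $\Phi$ induces a perfect $G$‑invariant pairing $\lambda S\times(M/\lambda S)\to k$. Picking a $k$‑linear section of $M\twoheadrightarrow S$ and writing $\Phi$ in the resulting basis, $G$‑invariance and Schur's Lemma (using the uniqueness clause of Lemma~\ref{L:bilinear}) force $\Phi=\left(\begin{smallmatrix}0&cB\\c'B&T\end{smallmatrix}\right)$ with $c,c'\in k^{\times}$, the block $T$ being constrained by a $1$‑cocycle identity built from the cocycle $f$ defining $M$. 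A short computation then shows that $\Phi$ is symmetric when $c'=\epsilon_\mu(\chi)c$ (and $T$ is chosen symmetric) and alternating when $c'=-\epsilon_\mu(\chi)c$, and that such a $T$ exists exactly when the class of $0\to S^*\to M\to S\to0$, transported via $B$ into $\operatorname{Ext}^1_{kG}(S,S^*)=H^1\bigl(G,S^*\otimes S^*\bigr)=H^1(G,\operatorname{Sym}(S))\oplus H^1(G,\operatorname{Alt}(S))$, lies in the appropriate summand. As $P(S)$ is uniserial, $\operatorname{Ext}^1_{kG}(S,\lambda S)$ is one‑dimensional, so exactly one of these two summands is non‑zero; consequently $M$ is orthogonal if and only if $H^1(G,\operatorname{Sym}(S))=0$, and symplectic if and only if $H^1(G,\operatorname{Alt}(S))=0$.

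Next I would compute this cohomology by descending to $H:=\op{C}_G(D)$, with $V:=S{\downarrow_{H}}$. Since $\lambda$ embeds $G/H$ into $k^{\times}$, the index $[G:H]$ is prime to $p$, so restriction $H^{n}(G,-)\to H^{n}(H,-)^{G/H}$ is an isomorphism for every module in sight and every $n$. Because $H$ centralises $D$, which is the central defect group of the block $\mathbf b$ of $H$, the simple $\mathbf b$‑module $V$ is inflated from a simple module of $H/D$ lying in a block of defect $0$; such a simple is projective, hence $V^{*}\otimes V^{*}$ and its summands $\operatorname{Sym}(V)$, $\operatorname{Alt}(V)$ are projective, hence injective, $k[H/D]$‑modules. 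The inflation–restriction sequence for $1\to D\to H\to H/D\to1$ then collapses to $H^1(H,\operatorname{Sym}(V))\cong H^1(D,\operatorname{Sym}(V))^{H/D}\cong\operatorname{Sym}(V)^{H}$, the last step because $H$ centralises $D$ and so $H/D$ acts trivially on $\operatorname{Hom}(D,k)$. Thus $H^1(G,\operatorname{Sym}(S))\neq0$ if and only if $V$ carries a non‑zero---hence, by Schur, non‑degenerate---$H$‑invariant symmetric bilinear form, i.e.\ if and only if $V$ is orthogonal as a $kH$‑module. Finally, since $\lambda{\downarrow_{H}}=1$, the form $B$ restricts to a non‑degenerate $H$‑invariant form on $V$ of the same symmetry type $\epsilon_\mu(\chi)$, so $V$ is orthogonal over $H$ precisely when $\epsilon_\mu(\chi)=+1$. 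Combining the two steps, $M$ is orthogonal if and only if $\epsilon_\mu(\chi)=-1$; and since $M$ is a self‑dual indecomposable module over a field of odd characteristic, it is orthogonal or symplectic but not both by \cite[Proposition~2.1]{W}, whence $M$ is symplectic if and only if $\epsilon_\mu(\chi)=+1$.

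The main obstacle will be the first step: translating self‑duality of $M$ into the precise statement that its type is detected by whether the extension class lies in the symmetric or the alternating part of $H^1(G,S^*\otimes S^*)$. This requires careful bilinear‑algebra bookkeeping in a basis compatible with the filtration, and crucially the one‑dimensionality of $\operatorname{Ext}^1_{kG}(S,\lambda S)$, which forces the extension class to equal its symmetric or its alternating component. A variant that may shorten the exposition is to first replace $G$ by $\op{C}_G(D)$: because restriction along a subgroup of $p'$‑index is exact on radical layers, $M{\downarrow_{\op{C}_G(D)}}$ is again the non‑split uniserial self‑extension of $V$, and being indecomposable and self‑dual it has the same type as $M$; one then only has to carry out the first step in the case $\lambda=1$, where $\epsilon_\mu(\chi)$ is simply the Frobenius--Schur indicator of the non‑exceptional character of $\mathbf b$.
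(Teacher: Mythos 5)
Your opening step is a genuinely different route from the paper's. The paper argues by brute force: it writes the representation of $M$ in block upper-triangular form, evaluates at a generator $d$ of $D$ to force the Gram matrix of an invariant form into the shape $\left(\begin{smallmatrix}0&B\\-B&*\end{smallmatrix}\right)$, and then identifies $B$ with the $\lambda$-semi-invariant form of Lemma~\ref{L:bilinear}, so that $\epsilon=-\epsilon_\mu(\chi)$ falls out directly. Your idea of detecting the type by whether the class of $0\to\lambda S\to M\to S\to0$ lies in $H^1(G,\op{Sym}(S))$ or $H^1(G,\op{Alt}(S))$, using the one-dimensionality of $\op{Ext}^1_{kG}(S,\lambda S)$, is legitimate and can be carried through; with the cohomology evaluated correctly it does reproduce the theorem.

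The second step, however, is where the argument genuinely breaks. First, your justification that $[G:\op{C}_G(D)]$ is prime to $p$ is wrong: $\lambda$ is defined by the action on the order-$p$ subgroup $D_1$, so its kernel is $\op{C}_G(D_1)$, not $\op{C}_G(D)$, and $\op{C}_G(D_1)/\op{C}_G(D)$ is a $p$-group which can be non-trivial when $|D|>p$ (blocks with normal cyclic defect group $D$ and $p\mid[G:\op{C}_G(D)]$ do exist, e.g.\ with $[G:E]$ divisible by $p$). Second, and fatally, even when the index is prime to $p$ you silently discard the $G/H$-invariants: $G$ acts on $H^1(D,k)\cong\op{Hom}(D,k)$ through $\lambda^{-1}$, not trivially, so the correct identification is $H^1(G,\op{Sym}(S))\cong\op{Hom}_G(\lambda,\op{Sym}(S))$, i.e.\ the $\lambda$-semi-invariant symmetric forms on $S$ --- which is exactly the one-dimensional space spanned by the form $B$ of Lemma~\ref{L:bilinear}, not the space of $H$-invariant forms on $V$. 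Your criterion ``$V$ is orthogonal over $H$ precisely when $\epsilon_\mu(\chi)=+1$'' is false: $V=S{\downarrow_H}$ is in general a direct sum of $[G:E]$ non-isomorphic simples (not simple, as you assume), and whenever it contains a dual pair of non-self-dual summands it carries both non-degenerate symmetric and non-degenerate alternating $H$-invariant forms, independently of $\epsilon_\mu(\chi)$. The paper's $C_{15}{:}C_8$ example makes this concrete: $S{\downarrow_H}$ and $S^*{\downarrow_H}$ support exactly the same $H$-invariant forms, yet $\epsilon_\mu(\chi)=+1$ while $\epsilon_\mu(\ov\chi)=-1$ and the modules $S/S^*$ and $S^*/S$ have opposite types, so no invariant of $V$ alone can decide the answer. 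The closing ``variant'' has the same defect: $M{\downarrow_{\op{C}_G(D)}}$ is generally decomposable, so it is not ``again the non-split uniserial self-extension of $V$''. To repair the proof you should compute $H^1(G,S^*\otimes S^*)$ via the normal subgroup $D$ itself (using that $S$ is projective over $k[G/D]$), keeping track of the $\lambda^{-1}$-twist on $H^1(D,k)$; this lands you back on the form $B$ and on the inner product $\epsilon_\mu(\chi)$, essentially as in the paper.
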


\begin{proof}
Let\/ $\alpha:G\rightarrow\op{GL}_n(k)$ be a matrix representation corresponding to $S$. Then corresponding to $M$, there is an indecomposable matrix representation $X:G\rightarrow\op{GL}_{2n}(k)$ which has block matrix form
$$
X(g)=\begin{bmatrix}\lambda(g)\alpha(g)&\beta(g)\\0&\alpha(g)\end{bmatrix},\quad\mbox{for all $g\in G$.}
$$
Here $\beta(g)$ is an $n\times n$-matrix with entries in $k$, for each $g\in G$.

Now $D$ is a normal $p$-subgroup of\/ $G$ and\/ $S$ and\/ $\lambda$ are irreducible $kG$-modules. Let\/ $d$ be a generator of the group $D$. Then $\alpha(d)=I$ and\/ $\lambda(d)=1_k$.  On the other hand\/ $D\not\le\op{ker}(M)$, as $M$ is indecomposable. So $X(d)=\begin{bmatrix}I&\beta(d)\\0&I\end{bmatrix}$, where $\beta(d)\ne0$.

Notice that\/ $X(d^m)=\begin{bmatrix}I&m\beta(d)\\0&I\end{bmatrix}$, for all $m\in\Z$. In particular $\langle d^p\rangle\leq\op{ker}(M)$. Thus $X(g)X(d)=X(d)^{\lambda(g)}X(g)$, for all $g\in G$ (recalling that\/ $\lambda(g)$ is defined modulo $p$ by the equation $gug^{-1}=u^{\lambda(g)}$). This equality implies in turn that
$$
\lambda(g)\alpha(g)\beta(d)=\beta(d)\lambda(g)\alpha(g),\quad\mbox{for all $g\in G$}.
$$
As $\alpha$ is irreducible and $\lambda(g)\in k^\times$, Schur's Lemma gives $\beta(d)=\sigma I$, for some $\sigma\in k^\times$. 

Now $M^*\cong M$ as $kG$-modules. So there is a matrix $E$ such that\/ $X(g)^{-t}E=EX(g)$, for all $g\in G$. We may also assume that\/ $E^t=\epsilon E$, where $\epsilon=\pm1$. Writing $E=\begin{bmatrix}A&B\\C&D\end{bmatrix}$, and set\/ $\gamma(g):=-\frac{1}{\lambda(g)}\alpha(g)^{-1}\beta(g)\alpha(g)^{-1}$, for all $g\in G$. Then
\begin{equation}\label{E:B}
\begin{bmatrix}\frac{1}{\lambda(g)}\alpha(g)^{-t}&0\\\gamma(g)^t&\alpha(g)^{-t}\end{bmatrix}\begin{bmatrix}A&B\\C&D\end{bmatrix}=
\begin{bmatrix}A&B\\C&D\end{bmatrix}\begin{bmatrix}\lambda(g)\alpha(g)&\beta(g)\\0&\alpha(g)\end{bmatrix}.
\end{equation}
First consider the case $g=d$. Then
$$
\begin{bmatrix}I&0\\-\sigma I&I\end{bmatrix}\begin{bmatrix}A&B\\C&D\end{bmatrix}=
\begin{bmatrix}A&B\\C&D\end{bmatrix}\begin{bmatrix}I&\sigma I\\0&I\end{bmatrix}.
$$
In particular $B=\sigma A+B$ and $-\sigma B+D=\sigma C+D$. The first equation gives $A=0$, and the second gives $C=-B$. Putting this back into \eqref{E:B}, we get\/ $\frac{1}{\lambda(g)}\alpha(g)^{-t}B=B\alpha(g)$, for all $g\in G$. This rearranges to \eqref{E:bilinear-form}. It then follows from Lemma \ref{L:bilinear} that\/ $B^t=\epsilon_\mu(\chi)B$, where $\chi$ and\/ $\mu$ are ordinary characters of $G$ which lift the Brauer characters of\/ $S$ and\/ $\lambda$, respectively, and $\epsilon_\mu(\chi)=\langle\lambda,\chi^{(2)}\rangle$ is defined by \eqref{E:epsilon}. Finally, as $C=-B$, we have
$$
\begin{bmatrix}0&\epsilon B\\-\epsilon B&\epsilon D\end{bmatrix}=\epsilon E=E^t=\begin{bmatrix}0&-\epsilon_\mu(\chi)B\\\epsilon_\mu(\chi)B&D^t\end{bmatrix}.
$$
So $\epsilon =-\epsilon_\mu(\chi)$ and $D^t=\epsilon D$. We conclude that\/ $M$ has orthogonal or symplectic type, as\/ $\epsilon_\mu(\chi)=-1$ or $+1$, respectively.
\end{proof}

\begin{Example}
Consider\/ $G=\op{SmallGroup}(120,7)$, in GAP notation. This is isomorphic to $C_{15}:C_8$, where a generator of the $C_8$ squares a generator of the $C_{15}$. Then $G$ has a 5-block\/ $\mathbf{B}=\{X_{11},X_{12},X_{15},X_{16}\}$ with cyclic defect group $C_5$ and\/ $e=2$. There are two irreducible $B$-modules, $S$ and\/ $S^*$, say. We assume that the Brauer character of\/ $S$ is the restriction of\/ $X_{11}$ to the $5$-regular elements of\/ $G$ and the Brauer character of\/ $S^*$ is the restriction of\/ $X_{12}$ to the $5$-regular elements of\/ $G$. The characters $X_{16}=\ov X_{15}$ are exceptional. We may take $\lambda=X_3$ as the linear character. Then we find that\/ $\langle X_3,X_{11}^{(2)}\rangle=+1$ and\/ $\langle X_3,X_{12}^{(2)}\rangle=-1$. So one of the self-dual indecomposable modules $S/S^*$ in $\mathbf{B}$ has symplectic type while the other $S^*/S$ has orthogonal type.
\end{Example}

We note that in a general example of this type with $e$ even, we have a linear character $\lambda$ and an irreducible character $\chi$ such that\/ $\ov\chi=\lambda\chi$. Also $\lambda^e\chi=\chi$. So we will have to compute $\langle\lambda,\chi^{(2)}\rangle$ and\/ $\langle\lambda,\lambda^e\chi^{(2)}\rangle$. These inner-products can be different, in spite of the fact that\/ $(\lambda^e\chi)^2=\chi^2$.

\begin{Example}
Consider $G=C_5:C_4$, where a generator of the $C_4$ inverts a generator of the $C_5$. Then $G$ has a 5-block with cyclic defect group $C_5$ and\/ $e=2$. Again $\mathbf{B}$ has no self-dual irreducible modules. But now its two exceptional characters are real, with Frobenius-Schur indicator $-1$. So immediately all self-dual indecomposable $\mathbf{B}$-modules have symplectic type.

We can also work out the type, using the technique of this note. Let\/ $S$ and\/ $S^*$ be the irreducible $\mathbf{B}$-modules, with corresponding lifted ordinary characters $\chi$ and\/ $\ov\chi$. Now $\lambda$ is a real linear character of\/ $G$. We find that\/ $\langle \lambda,\chi^{(2)}\rangle=\langle \lambda,\ov\chi^{(2)}\rangle=+1$. So both self-dual indecomposable modules $S/S^*$ and\/ $S^*/S$ have symplectic type.
\end{Example}

\section{Self-dual modules: the general case}\label{sec:generalcase}%

We now turn to the description of the self-dual indecomposable modules and their type for the general case of a real $p$-block\/ $\mathbf{B}$ which has a cyclic defect group $D$ of order $p^a\geq p\geq 3$, inertial index $e$ and exceptional multiplicity $m$. 

If\/ $e=1$, then the self-dual modules and their type are described as in Section \ref{sec:uniserial}. Therefore, we may assume $e\geq2$. Moreover, throughout this section, we adopt the following  conventions. We denote by $\mathbf{B}_1$ the Brauer correspondent of\/ $\mathbf{B}$, by $f$ the Green correspondence from $\mathbf{B}$ to $\mathbf{B}_1$ and by $g$ the Green correspondence from $\mathbf{B}_1$ to $\mathbf{B}$. The irreducible\/ $\mathbf{B}$-modules are labelled\/ $E_0,\dots,E_{e-1}$.
In this section, the irreducible $\mathbf{B}_1$-modules are labelled\/ $E_0(\mathbf{B}_1),\dots,E_{e-1}(\mathbf{B}_1)$ and correspond to the edges of $\sigma(\mathbf{B}_1)$ in counterclockwise order around the central exceptional vertex as in Section~\ref{sec:uniserial}.

\begin{Notation}\label{nota:bkappa}
There is no general pattern for the shape of the Brauer tree $\sigma(\mathbf{B})$. However, we can choose our notation so that the edges split into three families as follows:
\begin{enumerate}[\rm (1)]
    \item $b$ edges $\{E_0,\ldots,E_{b-1}\}$ labelling the real stem with $0\leq b\leq e$ (clearly $b$ is such that\/ $e-b$ is even);
    \item $(e-b)/2$ edges $\{E_{b},\ldots,E_{b-1+(e-b)/2}\}$ lying above the real stem in the planar embedding of\/ $\sigma(\mathbf{B})$\,;
    \item $(e-b)/2$ edges $\{E_{b}^\ast,\ldots,E_{b-1+(e-b)/2}^\ast\}$ lying below the real stem in the  planar embedding of\/ $\sigma(\mathbf{B})$.
\end{enumerate}
Moreover, provided\/ $m\geq 2$, we let\/ $\kappa$ denote the number edges of\/ $\sigma(\mathbf{B})$ which are not on the real stem  and whose shortest path to the real stem connects them to a node which is distinct from the exceptional node. Then $e-b-\kappa$ is the number of  edges of\/ $\sigma(\mathbf{B})$ which are not on the real stem  and whose shortest path to the real stem connects them to the exceptional node. 
\end{Notation}

Now, recall that there is a bijection between the irreducible modules and the PIMs of\/ $\mathbf{B}$ given by the taking of projective covers. It follows from this and formula (\ref{form:layers_dual}) that a PIM of\/ $\mathbf{B}$ is self-dual if and only if it is the projective cover of a self-dual irreducible module. This means, that the total number of isomorphism classes of self-dual indecomposable $\mathbf{B}$-modules somehow depends on the shape of the Brauer tree\/ $\mathbf{B}$, and more precisely on the length of the real stem. However, the number of isomorphism classes of non-projective  self-dual indecomposable $\mathbf{B}$-modules depends only on $|D|$, namely by Corollary \ref{cor:numberselfduals} there are exactly $|D|-1$ of them.

With this information, it is easy to produce a list of self-dual $\mathbf{B}$-modules and prove that they account for all the self-dual $\mathbf{B}$-modules using a counting argument. The precise classification is given by the following theorem and gives us Theorem \ref{thm:intro} of the introduction.

\begin{Theorem}\label{thm:description_selfduals}
Let\/ $p$ be an odd prime number. Let\/ $\mathbf{B}$ be a real $p$-block which has a non-trivial cyclic defect group $D$, inertial index $e\geq 2$, exceptional multiplicity $m$, and Brauer tree $\sigma(\mathbf{B})$. Then, up to isomorphism, the self-dual indecomposable $\mathbf{B}$-modules are as listed below. 
\begin{enumerate}[\rm(a)]
\item 
For $m\geq 1$, the $b$ irreducible $\mathbf{B}$-modules labelling the edges of the real stem of\/ $\sigma(\mathbf{B})$ are self-dual, where possibly $0\leq b\leq e$.
\item 
For $m\geq 1$, the $b$ projective indecomposable $\mathbf{B}$-modules of\/ $\mathbf{B}$ which are the projective covers of the $b$ irreducible modules labelling the real stem f\/ $\sigma(\mathbf{B})$ are self-dual.
\item  If\/ $m=1$, then there are precisely $e-b$ isomorphism classes of non-projective non-irreducible self-dual indecomposable  $\mathbf{B}$-modules, each parametrised by a path of shape
    \[
             \xymatrix@R=0.0000pt@C=30pt{
                    {\Circle}\ar@<0.0ex>[r]^{S_{1}}&
                    {\Circle}\ar@{.}[r]&{\Circle}\ar@<0.0ex>[r]^{S_{c}}&
                    {\Circle}\ar@<0.0ex>[r]^{S_{c}^\ast}&
                    {\Circle}\ar@{.}[r]&
                    {\Circle}\ar@<0.0ex>[r]^{S_{1}^\ast}&
                    {\Circle}
                    }
    \]    
    with direction $(1,-1)$, multiplicity $\mu=0$, and where, moreover, either $S_1,\ldots, S_c\in \{E_{b},\ldots,E_{{b-1}+(e-b)/2}\}$ or $S_1,\ldots, S_c\in \{E_{b}^\ast,\ldots,E_{{b-1}+(e-b)/2}^\ast\}$.
    
   \item  If\/ $m\geq 2$, then the pairwise distinct isomorphism classes of non-projective non-irreducible indecomposable $\mathbf{B}$-modules  parametrized by the following paths of even length are self-dual: 
    \begin{enumerate}[\rm(i)]
        \item the  $\kappa$ paths of shape
        \[
             \xymatrix@R=0.0000pt@C=30pt{
                    {\Circle}\ar@<0.0ex>[r]^{S_{1}}&
                    {\Circle}\ar@{.}[r]&{\Circle}\ar@<0.0ex>[r]^{S_{c}}&
                    {\Circle}\ar@<0.0ex>[r]^{S_{c}^\ast}&
                    {\Circle}\ar@{.}[r]&
                    {\Circle}\ar@<0.0ex>[r]^{S_{1}^\ast}&
                    {\Circle}
                    }
    \] 
    with direction $(1,-1)$, multiplicity $\mu=0$, and where, moreover, either $S_1,\ldots, S_c\in \{E_{b},\ldots,E_{b-1+(e-b)/2}\}$ or $S_1,\ldots, S_c\in \{E_{b}^\ast,\ldots,E_{b-1+(e-b)/2}^\ast\}$\,;
        \item  the  $e-b-\kappa$ paths of shape        
            \[
             \xymatrix@R=0.0000pt@C=30pt{
                    {\Circle}\ar@<0.0ex>[r]^{S_{1}}&
                    {\Circle}\ar@{.}[r]&{\Circle}\ar@<0.0ex>[r]^{S_{c}}&
                    {\CIRCLE}\ar@<0.0ex>[r]^{S_{c}^\ast}&
                    {\Circle}\ar@{.}[r]&
                    {\Circle}\ar@<0.0ex>[r]^{S_{1}^\ast}&
                    {\Circle}
                    }
            \] 
         with direction $(1,-1)$, multiplicity $1\leq \mu\leq m$, and where, moreover, either $S_1,\ldots, S_c\in \{E_{b},\ldots,E_{b-1+(e-b)/2}\}$ or $S_1,\ldots, S_c\in \{E_{b}^\ast,\ldots,E_{b-1+(e-b)/2}^\ast\}$\,;
        \item   the  $\kappa$ paths of shape      
        \[ 
        \xymatrix@R=0.0000pt@C=29pt{
 	    & &&&& &\\
	           {\Circle}\ar@<0.0ex>[r]^{S_{1}}&{\Circle}\ar@{.}[r]&{\Circle}\ar@<0.0ex>[r]^{S_{c-1}}&{\Circle}\ar[ddr]^{S_{c}} & & &  \\
		&&&&&& & \\
		&&&&{\Circle}\ar[ddl]^{\:\:S_{c}^\ast}       \ar@<0.3ex>[r]^{S_{c+1}}&                         {\Circle}\ar@<0.3ex>[l]^{S_{c+1}}\ar@{.}[r]&                    {\Circle}\ar@<0.3ex>[r]^{S_{c+l}}&    {\CIRCLE}\ar@<0.3ex>[l]^{S_{c+l}}\\
		&&&&& &\\
		{\Circle}&{\Circle}\ar@<0.0ex>[l]^{S_{1}^\ast}\ar@{.}[r]&{\Circle}& {\Circle} \ar@<0.0ex>[l]^{S_{c-1}^\ast}& & & \\
		&&&&& &
	   }
         \] 
        with direction $(1,-1)$, multiplicity $2\leq \mu\leq m$, $l\geq 1$, and where, moreover, either $S_1,\ldots, S_c\in \{E_{b},\ldots,E_{b-1+(e-b)/2}\}$ or $S_1,\ldots, S_c\in \{E_{b}^\ast,\ldots,E_{b-1+(e-b)/2}^\ast\}$, and\/ $S_{c+1},\ldots,S_{c+l}\in \{E_0,\ldots, E_{b-1}\}$\,;
        \item   the $b$ paths of shape   
        \[ \xymatrix@R=0.0000pt@C=30pt{
                   {\Circle}\ar@<0.3ex>[r]^{S_{1}} &
                   {\Circle}\ar@<0.3ex>[l]^{S_1}\ar@{.}[r] &
                    {\Circle}\ar@<0.3ex>[r]^{S_{c}}&{\CIRCLE}\ar@<0.3ex>[l]^{S_c}
                   }
         \]
    with direction $(1,-1)$, multiplicity $2\leq\mu\leq m$, and\/ $S_1,\ldots,S_c\in \{E_0,\ldots, E_{b-1}\}$. 
    \end{enumerate}
\end{enumerate}
\end{Theorem}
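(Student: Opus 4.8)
The plan is to dispatch the irreducible and projective modules directly from the socle--radical duality formulae, to treat the non-projective non-irreducible modules through Janusz' parametrisation, and then to close the classification by a counting argument built on Corollary~\ref{cor:numberselfduals}.

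First I would handle (a) and (b). Part (a) is nothing but Green's description of the real stem: its edges index exactly the self-dual irreducible $\mathbf{B}$-modules. For (b), formula~\eqref{form:socle_head} identifies $P(E)^*$ as the indecomposable projective whose socle and head are $E^*$, so $P(E)^*\cong P(E^*)$; hence $P(E)$ is self-dual if and only if $E\cong E^*$, i.e.\ iff $E$ labels a real-stem edge, and by the same formula these exhaust the self-dual projective indecomposables.

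Next, for (c) and (d), I would fix a non-projective non-irreducible indecomposable $\mathbf{B}$-module $X$ with Janusz data: a path $P=(S_1,\dots,S_t)$, a direction, and a multiplicity $\mu$ (Notation~\ref{nota:paths_dir_mult}). By the conventions of Section~\ref{sec:intro}, duality acts on $\sigma(\mathbf{B})$ as complex conjugation, i.e.\ as reflection in the real stem; combining this with exactness of $(-)^*$ and with~\eqref{form:socle_head}, one computes that $X^*$ has Janusz path $(S_t^*,S_{t-1}^*,\dots,S_1^*)$, namely the walk $P$ reflected in the real stem and read backwards, with head and socle roles interchanged, and with unchanged multiplicity $\mu$ (duality is exact and fixes the exceptional vertex). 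Therefore $X\cong X^*$ if and only if the underlying walk of $P$ is fixed, as an unoriented walk, by the reflection in the real stem, with direction and multiplicity compatible with this symmetry. A short check with the direction conventions then forces $t$ to be even, say $t=2c$, with $P$ of the palindromic shape $(S_1,\dots,S_c,S_c^*,\dots,S_1^*)$, and $\mu$ constrained according to whether and how $P$ meets the exceptional vertex. It then remains to enumerate the reflection-fixed even walks on the tree $\sigma(\mathbf{B})$ and to match them with the families in the statement: such a walk either (i) avoids the real stem, lying in the subtree above it (or its mirror below) and descending to a single turning vertex on the real stem, or (ii) runs along a non-trivial segment of the real stem with a mirrored pair of off-stem branches attached at its ends, or (iii) lies entirely on the real stem. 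Since $\sigma(\mathbf{B})$ is a tree, in case (i) the off-stem half of the walk is the unique path from its far endpoint (an off-stem edge) to the turning vertex, so these walks biject with the off-stem edges, split --- according to whether the turning vertex is the exceptional vertex --- as $e-b=\kappa+(e-b-\kappa)$ of Notation~\ref{nota:bkappa}; in case (iii) the walks biject with the $b$ subsegments of the real stem from an edge to the exceptional vertex. Running through the possibilities ($m=1$: only case (i), with $\mu=0$ forced, giving (c); $m\geq2$: case (i) with non-exceptional turning vertex gives (d)(i), with exceptional turning vertex gives (d)(ii), case (ii) gives (d)(iii) --- where the segment length $l\geq1$ is forced by the far endpoint and $\mu$ runs over $\{2,\dots,m\}$ --- and case (iii) gives (d)(iv) with $\mu\in\{2,\dots,m\}$), checking that the listed modules have pairwise distinct Janusz data, and counting them as $b$ irreducibles plus $e-b$ non-irreducibles if $m=1$, or $b$ irreducibles plus
\[
\kappa + (e-b-\kappa)m + \kappa(m-1) + b(m-1) \;=\; em-b
\]
non-irreducibles if $m\geq2$, we obtain exactly $|D|-1=em$ non-projective self-dual indecomposable $\mathbf{B}$-modules. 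Since this is their total number by Corollary~\ref{cor:numberselfduals}, and every listed module has been shown to be self-dual, the list of non-projective self-dual modules is complete; together with (b) this proves the theorem.

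The hard part will be the explicit computation of the effect of duality on Janusz' data for the Type~II paths, i.e.\ the ones turning at or wrapping around the exceptional vertex (families (d)(ii)--(iv)): this requires unwinding Janusz' construction from Appendix~\ref{app:A} carefully enough to verify both that duality sends such a path to its reflected--reversed path with unchanged multiplicity, and that in (d)(iii) the admissible data $(\mu,l)$ is genuinely parametrised by $\mu\in\{2,\dots,m\}$ alone (with $l$ forced) --- the latter fact being exactly what makes the displayed count collapse to $|D|-1$.
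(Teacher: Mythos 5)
Your proposal is correct and follows essentially the same route as the paper: parts (a) and (b) are dispatched directly, the listed paths are checked to yield pairwise non-isomorphic self-dual modules via Janusz' description of the composition factors and the socle-layer formula \eqref{form:layers_dual}, and completeness is obtained by the same count ($b+(e-b)=e$ for $m=1$, and $b+\kappa+(e-b-\kappa)m+\kappa(m-1)+b(m-1)=em=|D|-1$ for $m\geq 2$) compared against Corollary~\ref{cor:numberselfduals}. The only cosmetic difference is that you make the "dual path is the reflected--reversed path" computation explicit before counting, whereas the paper leaves it implicit in its reference to Notation~\ref{nota:paths_dir_mult} and \eqref{form:layers_dual}.
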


\begin{proof}
Assertions (a) and (b) are straightforward. 
We also notice that all the paths given in (c) and (d) give rise to pairwise non-isomorphic self-dual modules by the description of the composition factors of the associated modules in Notation~\ref{nota:paths_dir_mult} and Formula~(\ref{form:layers_dual}) for the socle layers of the dual module. Now, by Corollary~\ref{cor:numberselfduals}, there are precisely $|D|-1=e$ isomorphism classes of non-projective indecomposable self-dual $\mathbf{B}$-modules. Therefore, it only remains to verify that the  numbers of modules described in (a) and (c) if\/ $m=1$, respectively in (a) and (d) if\/ $m\geq 2$, add up to $|D|-1$. 
\par
First, assume that\/ $m=1$. If\/ $b=e$, then there is nothing to do. Indeed, by (a) we have already $|D|-1=e$ pairwise non-isomorphic non-projective indecomposable $\mathbf{B}$-modules, namely the irreducible modules $E_0,\ldots,E_{e-1}$. If\/ $1\leq b < e$, then there exists $e-b$ paths as given in (c), starting at each edge not on the real stem. Adding up these modules with the $e$ irreducible modules from (a), we have already $(e-b)+b=e=|D|-1$ pairwise non-isomorphic self-dual indecomposable modules, proving Assertion (c).
\par
Next, assume that\/ $m\geq 2$. There are $\kappa$ isomorphism classes of modules of type (i), $(e-b-\kappa)m$ of type (ii), $\kappa(m-1)$ of type (iii), $b(m-1)$ of type $(iv)$. Thus, altogether (a) and (d) provide us with 
        \[  
        b+\kappa+(e-b-\kappa)m+\kappa(m-1)+b(m-1)=em=|D|-1
        \]
isomorphism classes of self-dual indecomposable $\mathbf{B}$-modules. Assertion (d) and the claim of the theorem follow.         
\end{proof}

Next we explain how to describe the type of the self-dual modules listed in {Theorem~\ref{thm:description_selfduals}}. It is clear that the type of a self-dual indecomposable $\mathbf{B}$-module is not an invariant of the Morita equivalence class of\/ $\mathbf{B}$. In other words, the data encoded in the Brauer tree is not sufficient to determine the type. However, as in the uniserial case, we can always compare self-dual indecomposables with self-dual hooks of\/ $\mathbf{B}$. In order to achieve this, we use concepts and results from \cite{BC} and \cite{HLa}. First it is necessary to define the side of the tube $\Gamma_s(\mathbf{B})=(\mathbb{Z}/e\mathbb{Z})A_{|D|-1}$ which we consider as \emph{level $1$}. This is unfortunately technical. We explain, however, how it works in the next remark in details. 

\begin{Rem}\label{rem:using_BC}
\begin{enumerate}[\rm(1)]
\item
The hooks of\/ $\mathbf{B}$ are all liftable modules and the ordinary characters afforded by their lifts are easy to describe. More precisely, if\/ $E$ is an irreducible $\mathbf{B}$-module labelling the edge 
$$
    \begin{tikzcd}
    \cdots\,\underset{\chi_a}{{\Circle}} \arrow[r, dash,"E"] & \underset{\chi_b}{{\Circle}} \,\cdots
    \end{tikzcd}
$$
of\/ $\sigma(\mathbf{B})$, then its projective cover $P(E)$ is
    \vspace{-1mm}
\[
    P(E)=\boxed{\begin{smallmatrix} E \\ Q_a\oplus\, Q_b \\ E\end{smallmatrix}}\,.
\]
with $\rad(P(E))/\soc(P(E))=Q_a\oplus Q_b$ for two uniserial (possibly zero) $\mathbf{B}$-modules {$Q_a$ and~$Q_b$}. The hooks of\/ $\mathbf{B}$ are defined to be the uniserial quotients
$$
H_a:=\boxed{\begin{smallmatrix}E\\Q_a\end{smallmatrix}}\qquad\text{ and }\qquad H_b:=\boxed{\begin{smallmatrix}E\\Q_b\end{smallmatrix}}\,.
$$
The projective indecomposable character afforded by $P(E)$ is $\Phi_{S_j}=\chi_{a}+\chi_{b}$, and as $e>1$, counting constituents, it easily follows from the decomposition matrix of\/ $\mathbf{B}$ that any lift of\/ $H_a$ affords the character $\chi_a$ and any lift of\/ $H_b$ affords the character~$\chi_b$. We refer for example to \cite{Gr} and \cite{HN} for proofs of these \smallskip results.
\item Let\/ $u$ be a generator of the subgroup $D_1$ of\/ $D$ of order $p$. Since projective characters vanish at\/ $p$-elements, we have $\chi_a(u)=-\chi_b(u)$. Moreover, these character values are integers. Hence, following \cite[\S4.2.]{HLa}, for $c\in\{a,b\}$ we say that the hook\/ $H_c$ is a \emph{positive hook} if\/ $\chi_c(u)>0$, respectively  a \emph{negative hook} if  $\chi_c(u)<0$.
\par
By \cite[Corollary 4.3]{HLa} a hook of\/ $\mathbf{B}$ is positive if its Green correspondent in $\mathbf{B}_1$ is irreducible, and it is negative if its Green correspondent in $\mathbf{B}_1$ has composition length $|D|-1$. In other words, one end\/ $\Omega^2$-orbit of\/ $\Gamma_s(\mathbf{B})$ consists of the positive hooks and the other end\/ $\Omega^2$-orbit of\/ $\Gamma_s(\mathbf{B})$ consists of the negative \smallskip hooks.
   
\item
For $w\in\Z$ write $\overline{w}:=w+e\Z$ for the residue class of $w$ in $\Z/e\Z$. We say that an indecomposable\/  $\mathbf{B}$-module\/ $Z$ is located at position\/  $(\,\overline{v},\ell)$ in\/ $\Gamma_S(\mathbf{B})\cong (\Z/e\Z)A_{|D|-1}$ if the Green correspondent\/ $f(Z)$ of\/ $Z$ is the unique uniserial\/ $\mathbf{B}_1$-module with  composition length $\ell$ and socle $E_w(\mathbf{B}_1)$ with $0\leq w\leq e-1$ and $\overline{w}=\overline{v}$ \smallskip in $\Z/e\Z$. 
   
\item 
Each non-projective indecomposable  $\mathbf{B}$-module $M$ defines a uniquely determined positive hook\/ $H_M^+$ of\/ $\mathbf{B}$ as follows. Assume we consider the positive hooks of\/ $\mathbf{B}$ constitute level $1$ of\/ $\Gamma_s(\mathbf{B})\cong (\mathbb{Z}/e\mathbb{Z})A_{|D|-1}$ and\/ $M$ is located at position $(\,\overline{v}, \ell)$. Then $H_M^+$ is defined to be the module at position $(\,\overline{v},1)$. So, $H_M^+$ lies at the end of a shortest path in $\Gamma_s(\mathbf{B})$ from $M$ to the rim consisting of the positive hooks. Still following \cite[\S4.2.]{HLa}  we define the \emph{positive distance} of\/ $M$ to the rim, written
$$
d^{+}(M,H_M^+)\,,
$$
to be the length of the shortest path in $\Gamma_s(\mathbf{B})$ between $M$ and\/ $H_M^+$. Furthermore, we let\/ $d^{-}(M,H_M^+)$ to be the length of the shortest path in $\Gamma_s(\mathbf{B})$ between $M$ and\/ $\Omega^1(H_M^+)$. We have \smallskip $d^{+}(M,H_M^+)+d^{-}(M,H_M^+)=|D|-2$.
    
\item The main results of \cite{BC}  provide us with closed formulae to compute the distances $d^+(M,H_M^+)$ and\/ $d^{-}(M,H_M^+)$ from $M$ to the rim of  $\Gamma_s(\mathbf{B})$. These formulae are based on the description of the modules by their path, direction and multiplicity (if not irreducible). They are very technical and involve a large number of further  parameters. We do not repeat them here, but refer the reader directly to \cite[Theorem 3.3 and Theorem 3.5]{BC}.
\end{enumerate}
\end{Rem}

\begin{Theorem}\label{thm:type_selfduals_general_case}%
Let\/ $p$ be an odd prime number. Let\/ $\mathbf{B}$ be a real $p$-block which has a non-trivial cyclic defect group $D$, inertial index $e\geq 2$, exceptional multiplicity $m$, and Brauer tree $\sigma(\mathbf{B})$. Let\/ $\mathbf{B}_1$ denote the Brauer correspondent of\/ $\mathbf{B}$ in $N_G(D_1)$.  

Let\/ $M$ be a non-projective self-dual indecomposable $\mathbf{B}$-module and assume the hook\/ $H_M^+$ as well as $d^+(M,H_M^+)$  have been determined using \cite[Theorem 3.3 and Theorem 3.5]{BC}. Then the following assertions hold.
\begin{enumerate}[\rm(a)]
\item
Assume $e$ is even and  $\mathbf{B}_1$ has two pairwise non-isomorphic  self-dual irreducible modules. Then, there exists an integer $0\leq i<(|D|-1)/2$ such that\/ $d^+(M,H_M^+)=:2i$  and the type of\/ $M$ is equal to the type of the self-dual hook\/ $\Omega^{-2i}(H_M^+)$. If\/ $m$ is odd, then all self-dual indecomposable\/ $\mathbf{B}$-modules have the same type. 

\item
Assume $e$ is even and\/ $\mathbf{B}_1$ has no self-dual irreducible modules. Then there exists an integer  $0\leq j<(|D|-1)/2$ such that\/ $d^-(M,H_M^+)=:2j$ and the type of\/ $M$ is equal to the type of the self-dual hook\/ $\Omega^{2j+1}(H^+_M)$.

\item
Assume $e$ is odd. If\/ $E_0(\mathbf{B}_1)$ denotes the unique self-dual irreducible $\mathbf{B}_1$-module, then the following holds:
\begin{enumerate}[\rm(i)]
\item if\/ $d^+(M,H_M^+)$ is even, then the type of\/ $M$ is equal to the type of the self-dual hook\/ $g(E_0(\mathbf{B}_1))$\,; and

\item if\/ $d^+(M,H_M^+)$ is odd, then the type of\/ $M$ is equal to the type of the self-dual hook\/ $\Omega^e(g(E_0(\mathbf{B}_1)))$.
\end{enumerate}
\end{enumerate}
\end{Theorem}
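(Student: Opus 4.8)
The plan is to reduce the general case to the uniserial case already settled in Section~\ref{sec:uniserial}, using the Green correspondence $f\colon\mathbf{B}\to\mathbf{B}_1$ and the structure of the stable Auslander--Reiten quiver. By Lemma~\ref{L:self-dual-green} (more precisely, its analogue valid without the assumption $e=1$, which follows from the same argument together with \cite{GW}), the Green correspondence restricts to a type-preserving bijection between self-dual indecomposable $\mathbf{B}$-modules and self-dual indecomposable $\mathbf{B}_1$-modules, and it induces the graph isomorphism $\Gamma_s(\mathbf{B})\cong\Gamma_s(\mathbf{B}_1)$. Moreover, by Remark~\ref{rem:using_BC}(2) this graph isomorphism carries the positive hooks of $\mathbf{B}$ to the irreducible $\mathbf{B}_1$-modules (level $1$ of $\Gamma_s(\mathbf{B}_1)$, once we agree that level $1$ consists of the positive hooks on the $\mathbf{B}$-side) and the negative hooks of $\mathbf{B}$ to the $\mathbf{B}_1$-modules of composition length $|D|-1$. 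So it suffices to prove the corresponding statements inside $\mathbf{B}_1$, for $f(M)$, where the position coordinates and the distances to the rim are visible directly from composition length.

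\textbf{Carrying this out.}
First I would fix the translation dictionary: if $M$ sits at position $(\overline v,\ell)$ in $\Gamma_s(\mathbf{B})$ with level~$1$ chosen to be the positive-hook orbit, then $f(M)=[w,\ell]$ for the appropriate residue $w$, by Remark~\ref{rem:using_BC}(3); the positive distance satisfies $d^+(M,H_M^+)=\ell-1$ and the negative distance satisfies $d^-(M,H_M^+)=|D|-1-\ell$, since by Remark~\ref{rem:using_BC}(4) the two distances sum to $|D|-2$ and $d^+$ measures the number of steps down to length~$1$. Since $\Omega^{\pm2}$ on the $\mathbf{B}$-side corresponds to moving along a level (it is the AR translate, cf.\ Subsection~\ref{ssec:sARquiver}), the module $\Omega^{-2i}(H_M^+)$ on the $\mathbf{B}$-side corresponds under $f$ to the irreducible $\mathbf{B}_1$-module obtained by applying $\Omega^{-2i}$ to $f(H_M^+)$, which lies in level~$1$ of $\Gamma_s(\mathbf{B}_1)$; type is preserved throughout. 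With this dictionary in hand, part~(a) becomes exactly the statement of Proposition~\ref{prop:sd(B_1)=2}: the self-dual $\mathbf{B}_1$-modules are $[i,2i+1]$ and $[h+i,2i+1]$, both of odd composition length, so $d^+=\ell-1$ is even, $\ell-1=2i$, and $[i,2i+1]$ has the same type as $E_0=\Omega^{-2i}$ applied (within level~$1$) to the appropriate irreducible; the ``$m$ odd $\Rightarrow$ all the same type'' clause is Corollary~\ref{cor:all_same_type}. Part~(b) is Proposition~\ref{prop:sd(B_1)=0}: here the self-dual $\mathbf{B}_1$-modules $[i,2i+2]$, $[h+i,2i+2]$ have even composition length, so $d^-=|D|-1-\ell$ is even, $=2j$ say, and their type is that of the length-$|D|-1$ hook $[(mh)^*,me]$ (resp.\ $[(mh+h)^*,me]$); on the $\mathbf{B}$-side this length-$|D|-1$ module is a negative hook, equal to $\Omega^{-2j+\text{(const)}}$ of $\Omega^1(H_M^+)$, and keeping track of the constant using $d^+(M,H_M^+)+d^-=|D|-2$ together with isomorphism~\eqref{E:dual-omega} identifies it with $\Omega^{2j+1}(H_M^+)$. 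Part~(c) is Proposition~\ref{prop:sd(B_1)=1}: the two families have odd (resp.\ even) composition length; the odd-length family has the type of $P(E_0)$, hence by \cite[Proposition~2.2]{W} the type of $E_0$, hence of $g(E_0(\mathbf{B}_1))$, a self-dual hook; and the even-length family has the type of $[h^*,me]\cong\Omega^e(E_0)$ by Proposition~\ref{prop:sd(B_1)=1}, whose Green correspondent is $\Omega^e(g(E_0(\mathbf{B}_1)))$.

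\textbf{Main obstacle.}
The genuinely delicate point is not the algebra but the bookkeeping of $\Omega$-powers and the orientation of the tube: one must verify that ``level~$1$ = positive hooks'' is the correct normalization so that $d^+(M,H_M^+)=\ell-1$ rather than $|D|-1-\ell$, and then compute the exact exponent in $\Omega^{-2i}(H_M^+)$, $\Omega^{2j+1}(H_M^+)$, $\Omega^{e}(g(E_0(\mathbf{B}_1)))$, taking care that moving ``up'' in composition length corresponds to the opposite of moving ``down'' in the Heller translate, and that the self-dual hook at the far end of an $\Omega^2$-orbit from a given hook $H$ is $\Omega^e(H)$ (Corollary~\ref{cor:SD_hooks_in_ARquiver}). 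I would therefore phrase the bulk of the proof as: under $f$, transport $M$ to $[w,\ell]$; read off which family of Proposition~\ref{prop:sd(B_1)=0}, \ref{prop:sd(B_1)=2} or~\ref{prop:sd(B_1)=1} it belongs to from the parity of $\ell$; invoke that proposition to identify its type with that of a distinguished hook; and finally translate that hook back to the $\mathbf{B}$-side, matching it against $\Omega^{-2i}(H_M^+)$, $\Omega^{2j+1}(H_M^+)$, or $\Omega^{\pm e}(g(E_0(\mathbf{B}_1)))$ via \eqref{E:dual-omega}, \eqref{E:dual-tau} and Corollary~\ref{cor:SD_hooks_in_ARquiver}.
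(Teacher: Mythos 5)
Your proposal is correct and follows essentially the same route as the paper: reduce to $\mathbf{B}_1$ via the type-preserving Green correspondence of Lemma~\ref{L:self-dual-green}, identify $d^+(M,H_M^+)$ with the composition length of $f(M)$ minus one, and then read off the relevant family and distinguished hook from Propositions~\ref{prop:sd(B_1)=2}, \ref{prop:sd(B_1)=0} and~\ref{prop:sd(B_1)=1} together with Corollary~\ref{cor:all_same_type} and the $\Omega$-bookkeeping via \eqref{E:dual-omega} and Corollary~\ref{cor:SD_hooks_in_ARquiver}. This matches the paper's proof in both structure and the lemmas invoked.
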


\begin{proof}
By Lemma \ref{L:self-dual-green} Green correspondence between $\mathbf{B}$ and\/ $\mathbf{B}_1$ preserves the type of the self-dual indecomposable modules. Thus, $M$ and\/ $f(M)$ have the same type, and so do $H_M^+$ and\/ $f(H_M^+)$. Moreover, in $\mathbf{B}_1$ we have
$$
f(H_M^+)\cong H^+_{f(M)}\cong\soc(f(M))
$$
and\/ $d^+(M,H_M^+)=d^+(f(M),H^+_{f(M)})$ is the composition length of\/ $f(M)$ minus one. In addition, the Green correspondence commutes with the Heller operator, so it suffices to prove the theorem for the Green correspondents in $\mathbf{B}_1$ of the modules involved. 
\begin{enumerate}[\rm(a)]
\item
With the notation of Proposition~\ref{prop:sd(B_1)=2},  either $f(M)=[i,2i+1]$ with $0\leq i\leq mh-1$, or $f(M)=[h+i,2i+1]$ with $0\leq i \leq mh-1$. In both cases, the composition length is $2i+1$. Moreover, in the former case the type of\/ $f(M)$ is the type of\/
$$
E_0(\mathbf{B}_1)\cong\Omega^{-2i}(\soc(f(M)))=\Omega^{-2i}(f(H_M^+))\,,
$$ and in the latter case the type of\/ $f(M)$ is the type of\/
$$
E_h(\mathbf{B}_1)=\Omega^{-2i}(\soc(f(M)))\cong\Omega^{-2i}(f(H_M^+))\,.
$$
The second claim follows directly from Corollary \ref{cor:all_same_type}.  
\item
With the notation of Proposition~\ref{prop:sd(B_1)=0},  either $f(M)=[i,2i+2]$ with $0\leq i\leq mh-1$, or $f(M)=[h+i,2i+2]$ with $0\leq i \leq mh-1$. In both cases, the composition length is $2i+2$. So $d^-(f(M),H_M^+)=2j$ for $j:=(|D|-3)/2-i$ and in the former case the type of\/ $f(M)$ is the type of the self-dual hook 
$$
[((mh)^\ast,me)]\cong \Omega^{2j+1}(\soc(f(M)))\cong\Omega^{2j+1}(f(H_M^+))\,,
$$
and in the latter case the type of\/ $f(M)$ is the type of the self-dual hook
$$
[((mh+h)^\ast,me)]\cong \Omega^{2j}(\Omega(\soc(f(M))))\cong\Omega^{2j+1}(f(H_M^+))\,.
$$
 
\item
Assertions (i) and (ii) are immediate from the above and  Proposition \ref{prop:sd(B_1)=1}.
\end{enumerate}
\end{proof}

\begin{Example}[A real block with $e=12$ and\/ $m=9$]
Consider the principal $109$-block\/ $\mathbf{B}_0$ of the Ree group $^2\!\text{F}_4(q^2)$ with $q^2=2^{2u+1}$ for $u:=1$. Its planar embedded Brauer tree was determined in~\cite[Theorem~4.7]{H}. A defect group $D$ is cyclic of order $109$, the inertial index is $e=12$, the exceptional multiplicity is $m=(|D|-1)/e=9$ and the Brauer tree $\sigma(\mathbf{B}_0)$ is of the following form:
   \[  
   \begin{tikzpicture}[solidnode/.style={circle, fill=black!92, inner sep=0pt, minimum size=2.5mm},hollownode/.style={circle, fill=white!84, inner sep=0pt, minimum size=2.5mm}, auto,bend left]
        \draw (0,0)  node[hollownode,draw]{} -- node[above] {\tiny $E_0$} (1,0) node[hollownode,draw]{};
        \draw (1,0)  node[hollownode,draw]{} -- node[above] {\tiny $E_1$} (2,0) node[hollownode,draw]{};
        \draw (2,0)  node[hollownode,draw]{} -- node[above] {\tiny $E_2$} (3,0) node[solidnode,draw]{};    
        \draw (3,0)  node[solidnode,draw]{} -- node[above] {\tiny $E_3$} (4,0) node[hollownode,draw]{};     
        \draw (2,0)  node[hollownode,draw]{} -- node[left] {\tiny $E_4$\!} (2,1) node[hollownode,draw]{};  
        \draw (2,0)  node[hollownode,draw]{} -- node[left] {\tiny $E_4^\ast$\!} (2,-1) node[hollownode,draw]{}; 
        \draw (2,1)  node[hollownode,draw]{} -- node[left] {\tiny $E_5$\!} (2,2) node[hollownode,draw]{};  
        \draw (2,-1)  node[hollownode,draw]{} -- node[left] {\tiny $E_5^\ast$\!} (2,-2) node[hollownode,draw]{};
        \draw (2,1)  node[hollownode,draw]{} -- node[above] {\tiny $E_6$} (1,1) node[hollownode,draw]{}; 
        \draw (2,1)  node[hollownode,draw]{} -- node[above] {\tiny $E_7$} (3,1) node[hollownode,draw]{};
        \draw (2,-1)  node[hollownode,draw]{} -- node[above] {\tiny $E_6^\ast$} (1,-1) node[hollownode,draw]{}; 
        \draw (2,-1)  node[hollownode,draw]{} -- node[above] {\tiny $E_7^\ast$} (3,-1) node[hollownode,draw]{};     
    \end{tikzpicture}  
    \]
In particular, the real stem consists of  4 edges, which we label $E_0,\ldots, E_3$, i.e. $b=4$.  Note that\/ $E_0$ is the trivial module.

By Corollary~\ref{cor:numberselfduals} there are $|D|-1=108$ isomorphism classes of non-projective self-dual indecomposable $\mathbf{B}_0$-modules, which by Theorem~\ref{thm:description_selfduals} are as given below:

\begin{enumerate}[\rm (i)]
        \item the $b=4$ simple modules $E_0$, $E_1$, $E_2$ and\/ $E_3$\,;
        \item the modules given by the $\kappa=8$ paths
        \begin{enumerate}[\,]
                \item \(
                    \xymatrix@R=0.0000pt@C=30pt{
                    {\Circle}\ar@<0.0ex>[r]^{E_{4}}&
                    {\Circle}\ar@<0.0ex>[r]^{E_{4}^\ast}&
                    {\Circle}
                    },
                    \)
                \item \(
                    \xymatrix@R=0.0000pt@C=30pt{ {\Circle}\ar@<0.0ex>[r]^{E_{5}}&{\Circle}\ar@<0.0ex>[r]^{E_{4}}&
                    {\Circle}\ar@<0.0ex>[r]^{E_{4}^\ast}&
                    {\Circle}\ar@<0.0ex>[r]^{E_{5}^\ast}&
                    {\Circle}
                    },
                    \) 
                \item \(
                    \xymatrix@R=0.0000pt@C=30pt{ {\Circle}\ar@<0.0ex>[r]^{E_{6}}&{\Circle}\ar@<0.0ex>[r]^{E_{4}}&
                    {\Circle}\ar@<0.0ex>[r]^{E_{4}^\ast}&
                    {\Circle}\ar@<0.0ex>[r]^{E_{6}^\ast}&
                    {\Circle}
                    },
                    \)  
                \item \(
                    \xymatrix@R=0.0000pt@C=30pt{ {\Circle}\ar@<0.0ex>[r]^{E_{7}}&{\Circle}\ar@<0.0ex>[r]^{E_{4}}&
                    {\Circle}\ar@<0.0ex>[r]^{E_{4}^\ast}&
                    {\Circle}\ar@<0.0ex>[r]^{E_{7}^\ast}&
                    {\Circle}
                    },
                    \) 
                \item \(
                    \xymatrix@R=0.0000pt@C=30pt{
                    {\Circle}\ar@<0.0ex>[r]^{E_{4}^\ast}&
                    {\Circle}\ar@<0.0ex>[r]^{E_{4}}&
                    {\Circle}
                    },
                    \)  
                \item \(
                    \xymatrix@R=0.0000pt@C=30pt{ {\Circle}\ar@<0.0ex>[r]^{E_{5}^\ast}&{\Circle}\ar@<0.0ex>[r]^{E_{4}^\ast}&
                    {\Circle}\ar@<0.0ex>[r]^{E_{4}}&
                    {\Circle}\ar@<0.0ex>[r]^{E_{5}}&
                    {\Circle}
                    },
                    \)  
                \item \(
                    \xymatrix@R=0.0000pt@C=30pt{ {\Circle}\ar@<0.0ex>[r]^{E_{6}^\ast}&{\Circle}\ar@<0.0ex>[r]^{E_{4}^\ast}&
                    {\Circle}\ar@<0.0ex>[r]^{E_{4}}&
                    {\Circle}\ar@<0.0ex>[r]^{E_{6}}&
                    {\Circle}
                    },
                    \) 
                \item \(
                    \xymatrix@R=0.0000pt@C=30pt{ {\Circle}\ar@<0.0ex>[r]^{E_{7}^\ast}&{\Circle}\ar@<0.0ex>[r]^{E_{4}^\ast}&
                    {\Circle}\ar@<0.0ex>[r]^{E_{4}}&
                    {\Circle}\ar@<0.0ex>[r]^{E_{7}}&
                    {\Circle}
                    },   
                    \)          
              \end{enumerate}
        all of which with direction $(1,-1)$ and multiplicity  \smallskip   $\mu=0$\,;   
        \item the modules given by the $\kappa=8$ paths\\
                    \( 
                \xymatrix@R=0.0000pt@C=29pt{
 	                  & &\\
	                   {\Circle}\ar[ddr]^{E_{4}} &  &  \\
		            & & \\
		              &{\Circle}\ar[ddl]^{\:\:E_{4}^\ast}\ar@<0.3ex>[r]^{E_{2}} &    {\CIRCLE}\ar@<0.3ex>[l]^{E_{2}}\,,\\
		              & &\\
		            {\Circle} & & & \\
		              & &
	                }
            \) \qquad\qquad
            \( 
                \xymatrix@R=0.0000pt@C=29pt{
 	                  & &\\
	                   {\Circle}\ar[ddr]^{E_{4}^\ast} &  &  \\
		            & & \\
		              &{\Circle}\ar[ddl]^{\:\:E_{4}}\ar@<0.3ex>[r]^{E_{2}} &    {\CIRCLE}\ar@<0.3ex>[l]^{E_{2}}\,, \\
		              & &\\
		            {\Circle} & & & \\
		              & &
	                }
            \)\\ 
             \( 
                \xymatrix@R=0.0000pt@C=29pt{
 	                  && &\\
	                   {\Circle}\ar@<0.0ex>[r]^{E_{6}}&{\Circle}\ar[ddr]^{E_{4}} &  &  \\
		              && & \\
		              &&{\Circle}\ar[ddl]^{\:\:E_{4}^\ast}\ar@<0.3ex>[r]^{E_{2}} &    {\CIRCLE}\ar@<0.3ex>[l]^{E_{2}}\,,\\
		              && &\\
		              {\Circle}& {\Circle} \ar@<0.0ex>[l]^{E_{6}^\ast}& & & \\
		              && &
	                }
            \) 
            \( 
                \xymatrix@R=0.0000pt@C=29pt{
 	                  && &\\
	                   {\Circle}\ar@<0.0ex>[r]^{E_{6}^\ast}&{\Circle}\ar[ddr]^{E_{4}^\ast} &  &  \\
		              && & \\
		              &&{\Circle}\ar[ddl]^{\:\:E_{4}}\ar@<0.3ex>[r]^{E_{2}} &    {\CIRCLE}\ar@<0.3ex>[l]^{E_{2}}\,,\\
		              && &\\
		              {\Circle}& {\Circle} \ar@<0.0ex>[l]^{E_{6}}& & & \\
		              && &
	                }
            \) \\
            \( 
                \xymatrix@R=0.0000pt@C=29pt{
 	                  && &\\
	                   {\Circle}\ar@<0.0ex>[r]^{E_{5}}&{\Circle}\ar[ddr]^{E_{4}} &  &  \\
		              && & \\
		              &&{\Circle}\ar[ddl]^{\:\:E_{4}^\ast}\ar@<0.3ex>[r]^{E_{2}} &    {\CIRCLE}\ar@<0.3ex>[l]^{E_{2}}\,,\\
		              && &\\
		              {\Circle}& {\Circle} \ar@<0.0ex>[l]^{E_{5}^\ast}& & & \\
		              && &
	                }
            \) 
            \( 
                \xymatrix@R=0.0000pt@C=29pt{
 	                  && &\\
	                   {\Circle}\ar@<0.0ex>[r]^{E_{7}^\ast}&{\Circle}\ar[ddr]^{E_{4}^\ast} &  &  \\
		              && & \\
		              &&{\Circle}\ar[ddl]^{\:\:E_{4}}\ar@<0.3ex>[r]^{E_{2}} &    {\CIRCLE}\ar@<0.3ex>[l]^{E_{2}}\,,\\
		              && &\\
		              {\Circle}& {\Circle} \ar@<0.0ex>[l]^{E_{7}}& & & \\
		              && &
	                }
            \) \\ 
            \( 
                \xymatrix@R=0.0000pt@C=29pt{
 	                  && &\\
	                   {\Circle}\ar@<0.0ex>[r]^{E_{7}}&{\Circle}\ar[ddr]^{E_{4}} &  &  \\
		              && & \\
		              &&{\Circle}\ar[ddl]^{\:\:E_{4}^\ast}\ar@<0.3ex>[r]^{E_{2}} &    {\CIRCLE}\ar@<0.3ex>[l]^{E_{2}}\,,\\
		              && &\\
		              {\Circle}& {\Circle} \ar@<0.0ex>[l]^{E_{7}^\ast}& & & \\
		              && &
	                }
            \)    
            \( 
                \xymatrix@R=0.0000pt@C=29pt{
 	                  && &\\
	                   {\Circle}\ar@<0.0ex>[r]^{E_{7}^\ast}&{\Circle}\ar[ddr]^{E_{4}^\ast} &  &  \\
		              && & \\
		              &&{\Circle}\ar[ddl]^{\:\:E_{4}}\ar@<0.3ex>[r]^{E_{2}} &    {\CIRCLE}\ar@<0.3ex>[l]^{E_{2}}\,,\\
		              && &\\
		              {\Circle}& {\Circle} \ar@<0.0ex>[l]^{E_{7}}& & & \\
		              && &
	                }
            \) \\     
        with direction $(1,-1)$ and multiplicity $2\leq \mu\leq 9$\,;  
        \item the modules given by the $b=4$ paths 
            \begin{enumerate}[\,]
            \item \(\xymatrix@R=0.0000pt@C=30pt{
                  {\Circle}\ar@<0.3ex>[r]^{E_3}&{\CIRCLE}\ar@<0.3ex>[l]^{E_3}
                   }
                  \)\,,
            \item \(
                    \xymatrix@R=0.0000pt@C=30pt{
                  {\Circle}\ar@<0.3ex>[r]^{E_2}&{\CIRCLE}\ar@<0.3ex>[l]^{E_2}
                   }
                  \)\,,
            \item \( \xymatrix@R=0.0000pt@C=30pt{
                   {\Circle}\ar@<0.3ex>[r]^{E_{1}} & {\Circle}\ar@<0.3ex>[l]^{E_1}\ar@<0.3ex>[r]^{E_{2}}&
                   {\CIRCLE}\ar@<0.3ex>[l]^{E_2}
                   }
                 \)  
            \item \(
            \xymatrix@R=0.0000pt@C=30pt{
                   {\Circle}\ar@<0.3ex>[r]^{E_{0}} & {\Circle}\ar@<0.3ex>[l]^{E_0}\ar@<0.3ex>[r]^{E_{1}}& {\Circle}\ar@<0.3ex>[l]^{E_1}\ar@<0.3ex>[r]^{E_{2}}&
                   {\CIRCLE}\ar@<0.3ex>[l]^{E_2}
                   }
                  \)
           \end{enumerate}
    with direction $(1,-1)$ and multiplicity \smallskip $2\leq \mu\leq m=9$.
\end{enumerate}
We note that {\rm(i)} yields $4$ isomorphism classes of self-dual modules, {\rm(ii)} yields $8$ isomorphism classes of self-dual modules, {\rm(iii)} yields $8\times 8$ isomorphism classes of self-dual modules and {\rm(iv)} yields $4\times8$ isomorphism classes of self-dual modules. Adding up, we see that we have described the $108$ required isomorphism classes of non-projective self-dual indecomposable \medskip  $\mathbf{B}_0$-modules.
\par
Next we describe the  positive hooks, according to the definition we gave in Remark \ref{rem:using_BC}(2). As the trivial character is the character afforded by the trivial module, it is positive on any generator $u$ of\/ $D_1$, so we obtain positive and negative signs for the values $\chi(u)$ for $\chi\in\mbox{\textup{Irr}}(\mathbf{B}_0)$ as follows:
\[  
   \begin{tikzpicture}[solidnode/.style={circle, fill=black!92, inner sep=0pt, minimum size=2.5mm},hollownode/.style={circle, fill=white!84, inner sep=0pt, minimum size=2.5mm}, auto,bend left]
        \draw (0,0)  node[hollownode,draw]{} -- node[above] {\tiny $E_0$} (1,0) node[hollownode,draw]{};
        \draw (1,0)  node[hollownode,draw]{} -- node[above] {\tiny $E_1$} (2,0) node[hollownode,draw]{};
        \draw (2,0)  node[hollownode,draw]{} -- node[above] {\tiny $E_2$} (3,0) node[solidnode,draw]{};    
        \draw (3,0)  node[solidnode,draw]{} -- node[above] {\tiny $E_3$} (4,0) node[hollownode,draw]{};     
        \draw (2,0)  node[hollownode,draw]{} -- node[left] {\tiny $E_4$\!} (2,1) node[hollownode,draw]{};  
        \draw (2,0)  node[hollownode,draw]{} -- node[left] {\tiny $E_4^\ast$\!} (2,-1) node[hollownode,draw]{}; 
        \draw (2,1)  node[hollownode,draw]{} -- node[left] {\tiny $E_5$\!} (2,2) node[hollownode,draw]{};  
        \draw (2,-1)  node[hollownode,draw]{} -- node[left] {\tiny $E_5^\ast$\!} (2,-2) node[hollownode,draw]{};
        \draw (2,1)  node[hollownode,draw]{} -- node[above] {\tiny $E_6$} (1,1) node[hollownode,draw]{}; 
        \draw (2,1)  node[hollownode,draw]{} -- node[above] {\tiny $E_7$} (3,1) node[hollownode,draw]{};
        \draw (2,-1)  node[hollownode,draw]{} -- node[above] {\tiny $E_6^\ast$} (1,-1) node[hollownode,draw]{}; 
        \draw (2,-1)  node[hollownode,draw]{} -- node[above] {\tiny $E_7^\ast$} (3,-1) node[hollownode,draw]{};  
        \node at (0,-0.3) {\tiny $+$};
        \node at (1,-0.3) {\tiny $-$};
        \node at (2.2,-0.3) {\tiny $+$};
        \node at (3,-0.3) {\tiny $-$};
        \node at (4,-0.3) {\tiny $+$};
        \node at (1,-1.3) {\tiny $+$};
        \node at (2.2,-1.3) {\tiny $-$};
        \node at (3,-1.3) {\tiny $+$};
        \node at (2,-2.3) {\tiny $+$};
        \node at (2,2.3) {\tiny $+$};
        \node at (3,1.3) {\tiny $+$};
        \node at (2.2,0.8) {\tiny $-$};
        \node at (1.2,0.8) {\tiny $+$};
    \end{tikzpicture}  
    \]
    It follows from Remark \ref{rem:using_BC} that the positive hooks are the following 12 uniserial modules: $E_0$, $E_5$, $E_5^\ast$, $E_6$, $E_6^\ast$, $E_7$, $E_7^\ast$, $E_3$,\\
    \(
    \xymatrix@R=0.0000pt@C=30pt{
                    {\Circle}\ar@<0.0ex>[r]^{E_{1}}&
                    {\Circle}\ar@<0.0ex>[r]^{E_{4}}&
                    {\Circle}
                    }
    \) 
    with direction $(1,-1)$ and multiplicity $\mu=0$,\\
        \(
    \xymatrix@R=0.0000pt@C=30pt{
                    {\Circle}\ar@<0.0ex>[r]^{E_{4}}&
                    {\Circle}\ar@<0.0ex>[r]^{E_{2}}&
                    {\Circle}
                    }
    \) 
    with direction $(1,-1)$ and multiplicity $\mu=0$,\\
        \(
    \xymatrix@R=0.0000pt@C=30pt{
                    {\Circle}\ar@<0.0ex>[r]^{E_{2}}&
                    {\Circle}\ar@<0.0ex>[r]^{E_{4}^\ast}&
                    {\Circle}
                    }
    \) 
    with direction $(1,-1)$ and multiplicity $\mu=0$, and \\
        \(
    \xymatrix@R=0.0000pt@C=30pt{
                    {\Circle}\ar@<0.0ex>[r]^{E_{4}^\ast}&
                    {\Circle}\ar@<0.0ex>[r]^{E_{1}}&
                    {\Circle}
                    }
    \) 
    with direction $(1,-1)$ and \medskip multiplicity $\mu=0$. 
\par    
The Brauer correspondent of\/ $\mathbf{B}_0$ is the principal block of\/ $N_G(D_1)$, so in this case, we actually know from Theorem~\ref{thm:type_selfduals_general_case}(a) that all self-dual modules have the same type, namely the type of the trivial module. 

However, we illustrate the procedure of Theorem \ref{thm:type_selfduals_general_case} to determine the type of the module~$M$ given by the path 
  \[ 
        \xymatrix@R=0.0000pt@C=29pt{
 	                  && &\\
 	                  {\text{\tiny $e_1$}}&{\text{\tiny $e_2$}}& &\\
	                   {\Circle}\ar@<0.0ex>[r]^{E_{6}}&{\Circle}\ar[ddr]^{E_{4}} & {\text{\tiny $e_3$}}  &  \\
		              && & \\
		              &&{\Circle}\ar[ddl]^{\:\:E_{4}^\ast}\ar@<0.3ex>[r]^{E_{2}} &    {\CIRCLE}{\quad\text{\tiny $e_4$}}\ar@<0.3ex>[l]^{E_{2}}\\
		              && {\text{\tiny $e_5$}} &\\
		              {\Circle}& {\Circle} \ar@<0.0ex>[l]^{E_{6}^\ast}& & & \\
		             {\text{\tiny $e_7$}} & {\text{\tiny $e_6$}}& & \\
		              && &
	       }
    \]
with direction $(1,-1)$ and multiplicity  $\mu=6$  assuming we don't know, a priori, that the type is the type of the trivial module. Notice that we have named the vertices $e_1$ to $e_7$, as the path is of length $6$. We apply \cite[Theorem 3.5]{BC} in order to obtain the position of\/ $M$ in $\Gamma_s(\mathbf{B}_0)$. More precisely, in order to use the formula of \cite[Theorem 3.5]{BC} for $d^+(M,H_M^+)$, we need to determine a series of local parameters defined in \cite{BC}. These are: $\alpha=1$, $s=6$, $v_a=e_6=v_1$, $v_z=e_1=v_{n+1}$, $S_a=E_6^\ast=X_1$, $S_z=E_6=X_n$, $X_{i_1}=E_6^\ast$,  $X_{i_2}=E_4^\ast$,  $X_{i_3}=E_2$, $X_{i_4}=E_2$, $X_{i_5}=E_4$, $X_{i_6}=E_6$, $n=33$, $k_0=4$, $\eta=\mu-2=6-2=4$ and the distance of\/ $M$ is caclulated to the positive hook\/ $H_M^+=E_6$. This yields:
\[
d^+(M,H_M^+)=(n-1)/2+\eta\cdot e= (33-1)/2+ 4\cdot 12 = 16+48=64\,.
\]
So the integer $i$ of Theorem \ref{thm:type_selfduals_general_case}(a) is $i=32$ and the type of\/ $M$ is the type of the self-dual hook\/ $\Omega^{-64}(H_M^+)\cong\Omega^{-4}(H_M^+)\cong E_0\cong k$ (where the last-but-one isomorphism follows from the shape of the Brauer tree). 
\end{Example}
\bigskip

\section*{Acknowledgment}
The idea for this paper arose during the conference on Representation Theory which took place at MFO Oberwolfach in April 2023. Much of the detail was then worked out during a visit of the second author to the University of Kaiserslautern-Landau (RPTU Kaiserslautern-Landau) in July 2023. We would like to thank both institutions and the DFG/SFB-TRR 195 for their support.

% JM - some journal titles italicized

\bigskip

%---------------------------------------%
%-------------- Appendix ------------------%
%---------------------------------------%

\appendix
\section{Parametrisation of the indecomposable modules}\label{app:A}%

In this appendix, we shortly recall how the indecomposable modules of a block\/ $\mathbf{B}$ with a cyclic defect group $D\cong C_{p^a}$ with $a\geq 1$ and inertial index $e$ can be described using the data given by its plannar embedded Brauer tree $\sigma(\mathbf{B})$ with exceptional multiplicity $m$. If not otherwise stated, we keep the notation introduced in Section \ref{sec:intro} and Section \ref{sec:prelim}. In particular, we let\/ $\{E_0,...,E_{e-1}\}$ denote the set of all pairwise non-isomorphic  irreducible $\mathbf{B}$-modules.  We refer the reader to \cite{Ja}, and also \cite{HN} for details.

\begin{Notation}\label{nota:paths_dir_mult}
Based on standard results of Janusz~\cite[\S5]{Ja} and work of Bleher-Chinburg \cite{BC}, up to isomorphism,  each non-projective non-irreducible indecomposable $\mathbf{B}$-module $X$  can be encoded  using the following three  parameters.
    \begin{enumerate}[\rm(1)]
        \item  A \emph{path} on $\sigma(\mathbf{B})$, which is by definition  a directed connected subgraph of
        $\sigma(\mathbf{B})$ of one of the following two \medskip types: \\
         \(  \xymatrix@R=0.0000pt@C=40pt{
         & {_{\chi_1}}
         & {_{\chi_2}}
         & {_{\chi_{t}}}
         & {_{\chi_{t+1}}}\\
         \text{\!(Type I)} & 
         {\Circle}\ar@<0.0ex>[r]^{S_{1}}
         &{\Circle}\ar@{.}[r]
         &{\Circle}\ar@<0.0ex>[r]^{S_{t}}
         &{\Circle}   \\
         & & & \phantom{X} &
        }
        \)\\
        where $t\geq 1$ and\/ $S_1,\ldots,S_t\in\{E_0,\ldots,E_{e-1}\}$ are pairwise distinct and  not adjacent to the exceptional vertex, 
        \medskip or, if\/ $m\geq 2$,\\
        (Type II)
        \[ \xymatrix@R=0.0000pt@C=29pt{
 	    {_{\chi_1}}& {_{\chi_2}}&{_{\chi_{r-1}}}&{_{\chi_{r}}}&& &\\
	    {\Circle}\ar@<0.0ex>[r]^{S_{1}}&{\Circle}\ar@{.}[r]&{\Circle}\ar@<0.0ex>[r]^{S_{r-1}}&{\Circle}\ar[ddr]^{S_{r}} & & &  \\
		&&&&{_{\chi_{r+1}}}&{_{\chi_{r+2}}}&{_{\chi_{r+l}}}&{_{\chi_{\Lambda}}} \\
		&&&&{\Circle}\ar[ddl]^{\:\:S_{r+2l+1}} \ar@<0.3ex>[r]^{S_{r+1}}&                         {\Circle}\ar@<0.3ex>[l]^{S_{r+2l}}\ar@{.}[r]&                    {\Circle}\ar@<0.3ex>[r]^{S_{r+l}}&                              {\CIRCLE}\ar@<0.3ex>[l]^{S_{r+l+1}}\\
		{_{\chi_{r+l+s}}}&{_{\chi_{r+l+s-1}}}&{_{\chi_{r+l+2}}}&{_{\chi_{r+l+1}}}&& &\\
		{\Circle}&{\Circle}\ar@<0.0ex>[l]^{S_{r+2l+s}}\ar@{.}[r]&{\Circle}& {\Circle} \ar@<0.0ex>[l]^{S_{r+2l+2}}& & & \\
		&&&&& &
	   }
         \] 
        where $l,r,s\geq 0$, $t:=r+2l+s\geq 2$, possibly $\chi_{r+1}=\chi_{\Lambda}$ (in which case $l:=0$),  $S_1,\ldots,S_{r+l},S_{},\ldots,S_t\in\{E_0,\ldots,E_{e-1}\}$ are pairwise distinct, 
        $S_{r+l+i}\cong S_{r+l-(i-1)}$ for each $1\leq i\leq l$ and\/ $S_{r+l}$ is adjacent to the exceptional vertex. \\
        These paths may be seen as an ordered sequence $(S_1,\ldots,S_{t})$ of edges of  $\sigma(\mathbf{B})$, called \emph{top-socle sequence} of\/ $X$, and  where $S_i,S_{i+1}$ have a common vertex for every $1\leq i\leq t$,  the odd-labelled edges are constituents of the head of\/ $X$ and the even-labelled edges are constituents of the socle of\/ $X$, or conversely.
        \item A  \emph{direction} $\varepsilon=(\varepsilon_1,\varepsilon_{t})$, where for $i\in\{1,t\}$  we set\/ $\varepsilon_i=1$ if\/ $S_i$ is in the head of\/ $X$ and\/ $\varepsilon_i=-1$ if\/ $S_i$ is in the socle of\/ $X$.
        \item A \emph{multiplicity} $\mu$.  If\/ $m=1$, then $\mu:=0$. If\/ $m>1$, then $\mu$ corresponds to the number of times that an irreducible module $S_{j}$ adjacent to the exceptional vertex  occurs as a composition factor of~$X$ (which is independent of the choice of\/ $S_j$).  
    \end{enumerate}
The structure of an indecomposable $\mathbf{B}$-module $X$ with top-socle sequence $(S_1,\ldots,S_{t})$, direction $\varepsilon$ and multiplicity $\mu$ can then be understood as follows. 
First, write $\Delta:={\{1\leq i\leq t \mid S_{i}\in \hd(X)\}}$. Then, for each $i\in\Delta$ there is a submodule $X_i$ of\/ $X$ with unique head composition factor $S_i$ and with socle composition factors consisting of the edges of\/ $\sigma(\mathbf{B})$ adjacent to $S_i$.
    \[  
    X_i\,=\quad
    \begin{tikzcd}[row sep=large, column sep=small]
       & S_i  \arrow[ld, no head, no head, dotted] \arrow[rd, no head, no head, dotted]   &  \\
     S_{i-1}  &     & S_{i+1}
    \end{tikzcd}
    \]  

If\/ $S_{i-1}$ and\/ $S_i$ have $\chi^{}_i$ as  common vertex and\/ $S_{i}$ and\/ $S_{i+1}$ have $\chi^{}_{i+1}$ as  common vertex, then the composition factors of the left leg of\/ $X_i$, from bottom to top, correspond to
the edges of\/ $\sigma(\mathbf{B})$ adjacent to $\chi_{i}$ encountered on a clockwise walk around\/ $\chi_{i}$ from $S_{i-1}$ to $S_i$. The composition factors of the right leg of\/ $X_i$, from top to bottom, correspond to the edges of\/ $\sigma(\mathbf{B})$ adjacent to $\chi_{i+1}$ encountered on
a counter-clockwise walk around\/ $\chi_{i+1}$ from $S_i$ to $S_{i+1}$. 
If the path starts with $S_i$, resp. ends with $S_i$, then $X_i$ has no left leg, resp. no right leg. In other words, $X_i$ is uniserial.

If the vertex between $S_{i-1}$ and\/ $S_i$, resp. between $S_{i}$ and\/ $S_{i+1}$ is the exceptional vertex, then we walk around\/ $\chi_{i}$, resp. $\chi_{i+1}$, several full circles in such a way that\/ $S_i$ occurs with multiplicity $\mu$ in the left, resp. right, leg of\/ $X_i$. Finally, the indecomposable module $X$ is obtained abstractly by amalgamating the modules isomorphic to $X_i$ along their common socle constituents, i.e. $X =\sum_{i\in\Delta} X_i$. It can be visualised as follows:
    \[  
    X\,=\quad
    \begin{tikzcd}[row sep=large, column sep=small]
             \cdots &  S_{i-2}\arrow[rd, no head, no head, dotted] &    & S_i  \arrow[ld, no head, no head, dotted] \arrow[rd, no head, no head, dotted]   &  & S_{i+2}  \arrow[ld, no head, no head, dotted] \arrow[rd, no head, no head, dotted]  & & \\
                    &   &  S_{i-1} &    &  S_{i+1}  & & S_{i+3} &\cdots
    \end{tikzcd}
    \]  
\end{Notation}
\bigskip

\begin{Rem}~\vspace*{-1mm} \label{rem:uniserials}
    \begin{enumerate}[\rm(a)]
        \item When $m=1$, even if one vertex is artificially designated to be exceptional, the above parametrization considers that there is no exceptional vertex. Thus, in this case, for simplicity we draw only paths of Type I. 
        \item     Warning: the set of indecomposable modules obtained by combining all possible paths, directions and multiplicities are not pairwise non-isomorphic. Even-length paths can give rise to doubles when the direction is switched. Such configurations are called \emph{mirror images} in \cite{BC}. This is the reason why it is helpful to consider directions.
        \item In particular, when considering uniserial modules, we may always assume that the direction is $(1,-1)$. Uniserial modules with direction $(-1,1)$ are then doubles of those with direction $(1,-1)$.
        \item A non-projective non-irreducible uniserial $\mathbf{B}$-module can be encoded by five  types of paths:
    \begin{enumerate}[\rm(U1)]
        \item    \(
                    \xymatrix@R=0.0000pt@C=30pt{
                    {\Circle}\ar@<0.0ex>[r]^{S_{1}}&{\Circle}\ar@<0.0ex>[r]^{S_{2}}&{\Circle}
                    }
                \)    with direction $(1,-1)$ and multiplicity $\mu=0$;
        \item   \(
                    \xymatrix@R=0.0000pt@C=30pt{
                    {\Circle}\ar@<0.0ex>[r]^{S_{1}}&{\Circle}\ar@<0.0ex>[r]^{S_{2}}&{\CIRCLE}
                    }
                \) with direction $(1,-1)$ and multiplicity $\mu=1$;
        \item   \(
                    \xymatrix@R=0.0000pt@C=30pt{
                    {\CIRCLE}\ar@<0.0ex>[r]^{S_{1}}&{\Circle}\ar@<0.0ex>[r]^{S_{2}}&{\Circle}
                    }
                \) with direction $(1,-1)$  and multiplicity $\mu=1$;        
        \item   \(
                    \xymatrix@R=0.0000pt@C=30pt{
                    {\Circle}\ar@<0.0ex>[r]^{S_{1}}&{\CIRCLE}\ar@<0.0ex>[r]^{S_{2}}&{\Circle}
                    }
                \)  with direction $(1,-1)$ and multiplicity ${1\leq\mu\leq m}$; or
        \item   \(
               \xymatrix@R=0.0000pt@C=30pt{
                   {\Circle}\ar@<0.3ex>[r]^{S_{1}}&{\CIRCLE}\ar@<0.3ex>[l]^{S_1}
                   }
                \)  with direction $(1,-1)$ and multiplicity $2\leq\mu\leq m$.     
        \end{enumerate}
       Here it is implicitly assumed\/ $S_1,S_2\in\{E_0,\ldots,E_{e-1}\}$ and that paths of type {\rm(U1)} can occur for an $m\geq 1$ arbitrary, whereas paths of types {\rm(U2)}, {\rm(U3)}, {\rm(U4)} and {\rm(U5)} occur only when $m\geq 2$.
   % Paths of types {\rm(U1)}, {\rm(U2)}, and {\rm(U3)} each give rise to one isomorphism classe of modules, paths of type {\rm(U4)} give rise to $m$ isomorphism classes of modules, and paths of type {\rm(U5)} give rise to $m-1$ isomorphism classes of modules. 
   % Types (1), (4) and (5) can give rise to doubles via duality!!
    \end{enumerate}
\end{Rem}

It is also easy to describe the self-dual uniserial modules using this notation. 

\begin{Lemma}\label{lem:uniserial_selfdual}
Let\/ $X$ be a non-projective non-irreducible uniserial $\mathbf{B}$-module. If\/ $X$ is self-dual, then it is parametrized by one of the following paths, where $S_1\in\{E_0,\ldots,E_{e-1}\}$:\\
if\/ $m\geq 1$, 
\begin{enumerate}[\rm(USD1)]
    \item  \(
                    \xymatrix@R=0.0000pt@C=30pt{
                    {\Circle}\ar@<0.0ex>[r]^{S_{1}}&{\Circle}\ar@<0.0ex>[r]^{S_{1}^\ast}&{\Circle}
                    }
                \)    with direction $(1,-1)$ and multiplicity $\mu=0$;
\end{enumerate}
or, if\/ $m\geq 2$,
\begin{enumerate}[\rm(USD2)]
    \item  \(
                \xymatrix@R=0.0000pt@C=30pt{
                    {\Circle}\ar@<0.0ex>[r]^{S_{1}}&{\CIRCLE}\ar@<0.0ex>[r]^{S_{1}^{\ast}}&{\Circle}
                    }
                \)  with direction $(1,-1)$ and multiplicity ${1\leq\mu\leq m}$; or
\end{enumerate}                
\begin{enumerate}[\rm(USD3)]
    \item \(
               \xymatrix@R=0.0000pt@C=30pt{
                   {\Circle}\ar@<0.3ex>[r]^{S_{1}}&{\CIRCLE}\ar@<0.3ex>[l]^{S_1}
                   }
                \)  with direction $(1,-1)$, multiplicity $2\leq\mu\leq m$ and\/ $S_1\cong S_1^{\ast}$.
    
\end{enumerate}
\end{Lemma}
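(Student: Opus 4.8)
The plan is to run through the five shapes of path listed in Remark~\ref{rem:uniserials}(d) one at a time and, using the duality formula~\eqref{form:socle_head}, pin down in each case exactly what self-duality forces. Two facts from Section~\ref{sec:prelim} will be used repeatedly. First, by Remark~\ref{rem:uniserials}(c) we may take $X$ with direction $(1,-1)$, so that a path of length $2$ of the form $\chi_a\xrightarrow{S_1}\chi_b\xrightarrow{S_2}\chi_c$ has $\hd(X)=S_1$ and $\soc(X)=S_2$, while a path of type (U5) has $\hd(X)=\soc(X)=S_1$. Second, duality acts on the irreducible $\mathbf{B}$-modules, equivalently on the edges of $\sigma(\mathbf{B})$, as the restriction of a graph automorphism of $\sigma(\mathbf{B})$ fixing the exceptional vertex — this is just complex conjugation on the plane containing $\sigma(\mathbf{B})$; see \cite{Gr}. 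In particular, for any irreducible $\mathbf{B}$-module $S$, the edge $S^*$ is incident to the exceptional vertex if and only if $S$ is.

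The first real step is to eliminate the types (U2) and (U3). By the shapes of these paths, exactly one of the edges $\hd(X)$, $\soc(X)$ is incident to the exceptional vertex: in case (U2) the path ends at the exceptional vertex, so $\soc(X)=S_2$ is incident to it while $\hd(X)=S_1$ meets only the two non-exceptional vertices $\chi_a,\chi_b$; case (U3) is symmetric, with $\hd(X)$ incident to the exceptional vertex and $\soc(X)$ not. But if $X$ were self-dual, then~\eqref{form:socle_head} gives $\hd(X)\cong\soc(X)^*$, and by the second fact above $\soc(X)^*$ is incident to the exceptional vertex precisely when $\soc(X)$ is; hence $\hd(X)$ and $\soc(X)$ would be incident to the exceptional vertex simultaneously, a contradiction. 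So no module of type (U2) or (U3) is self-dual.

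Next I would treat the surviving cases. For a path of type (U1) or (U4), written $\chi_a\xrightarrow{S_1}\chi_b\xrightarrow{S_2}\chi_c$, self-duality together with~\eqref{form:socle_head} gives $\soc(X)\cong\hd(X)^*$, i.e. $S_2\cong S_1^*$; substituting, the path reads $\chi_a\xrightarrow{S_1}\chi_b\xrightarrow{S_1^*}\chi_c$. If $m=1$, or more generally if no vertex of the path is the exceptional vertex, then $\mu=0$ and $X$ is of the form (USD1); if the middle vertex $\chi_b$ is the exceptional vertex then $1\le\mu\le m$ and $X$ is of the form (USD2). For a path of type (U5) — a single edge $S_1$ traversed forth and back, with far endpoint the exceptional vertex — we have $\hd(X)=\soc(X)=S_1$, so self-duality and~\eqref{form:socle_head} force $S_1\cong S_1^*$, which is exactly (USD3). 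Since by Remark~\ref{rem:uniserials}(d) every non-projective non-irreducible uniserial $\mathbf{B}$-module is of one of the types (U1)--(U5), this proves the lemma; note that in cases (U4) and (U5) no extra restriction on the multiplicity $\mu$ is produced, in agreement with the stated ranges.

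The only genuinely delicate point is the elimination of (U2) and (U3): one has to be sure that in those two configurations exactly one of $\hd(X),\soc(X)$ touches the exceptional vertex, which relies on the remaining path vertices being non-exceptional (otherwise $X$ would belong to a different type) and on duality acting as a graph automorphism of the planar Brauer tree fixing the exceptional vertex. Everything else is a routine comparison of the parametrization in Notation~\ref{nota:paths_dir_mult} and Remark~\ref{rem:uniserials}(d) with the socle/head identity~\eqref{form:socle_head}.
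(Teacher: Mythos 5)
Your argument is correct and follows essentially the same route as the paper's own proof: reduce to the list (U1)--(U5) from Remark~\ref{rem:uniserials}, use \eqref{form:socle_head} to force $S_2\cong S_1^*$ (resp.\ $S_1\cong S_1^*$ in case (U5)), and rule out (U2) and (U3) by the fact that duality is a reflection in the real stem fixing the exceptional vertex, so it preserves adjacency to that vertex. The only difference is cosmetic: the paper additionally remarks that the surviving configurations really are self-dual, but since the lemma only asserts the necessary condition, your omission of that converse check is harmless.
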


\begin{proof}
By Remark~\ref{rem:uniserials}(c) the non-irreducible uniserial $\mathbf{B}$-modules are given by paths of type (U1) to (U5). 
Now, if a uniserial module is self-dual, then we obtain from Formula (\ref{form:socle_head}) that\/ $S_2\cong S_1^\ast$ for modules of type (U1), (U2), (U3) and (U4), whereas $S_1\cong S_1^\ast$ for modules of type (U5). As the exceptional vertex is always on the real stem and duality induces a reflection with respect to the real stem, it follows from the above that modules of type (U2) and (U3) cannot be self-dual. The remaining modules of type (U1), (U4) and (U5) with direction and multiplicity as given in the claim are all self-dual by the description of their composition factors in Notation~\ref{nota:paths_dir_mult} and Formula~(\ref{form:layers_dual}) for the socle layers of the dual.
\end{proof}

\end{document}